\newcommand{\mint}{\mathop{\int\hspace{-1.05em}{\--}}\nolimits}
\newcommand{\R}{{\mathbb R}}
\newcommand{\N}{{\mathbb N}}
\newcommand{\eps}{\varepsilon}
\newtheorem{definition}{Definition}[section]
\newtheorem{rem}[definition]{Remark}
\newtheorem{thm}[definition]{Theorem}
\newtheorem{pro}[definition]{Proposition}
\newtheorem{lem}[definition]{Lemma}
\newtheorem{cor}[definition]{Corollary}
\begin{document}
\parindent0mm

\title[Curvature functionals with superquadratic growth]{Two-dimensional curvature functionals with superquadratic growth}

\author{Ernst Kuwert}
\address[E.~Kuwert]{Mathematisches Institut\\Universit\"at Freiburg\\Eckerstra\ss{}e 1\\79104 Freiburg\\Germany}
\email{ernst.kuwert@math.uni-freiburg.de}
\author{Tobias Lamm}
\address[T.~Lamm]{Institut f\"ur Mathematik\\Goethe-Universit\"at Frankfurt\\Robert-Mayer-Str. 10\\60054 Frankfurt\\Germany}
\email{lamm@math.uni-frankfurt.de}
\author{Yuxiang Li}
\address[Y.~Li]{Department of Mathematical Sciences\\Tsinghua University\\Beijing 100084\\P.R.China}
\email{yxli@math.tsinghua.edu.cn}
\thanks{This research was supported in parts by the DFG Collaborative Research Center SFB/Transregio 71, 
and by a PIMS Postdoctoral Fellowship of the second author.}
\date{\today}
\begin{abstract}
For two-dimensional, immersed closed surfaces $f:\Sigma \to \R^n$, we 
study the curvature functionals $\mathcal{E}^p(f)$ and $\mathcal{W}^p(f)$
with integrands $(1+|A|^2)^{p/2}$ and $(1+|H|^2)^{p/2}$, respectively. Here 
$A$ is the second fundamental form, $H$ is the mean curvature and we assume 
$p > 2$. Our main result asserts that $W^{2,p}$ critical points are smooth
in both cases. We also prove a compactness theorem for $\mathcal{W}^p$-bounded 
sequences. In the case of $\mathcal{E}^p$ this is just Langer's theorem
\cite{langer85}, while for $\mathcal{W}^p$ we have to impose a bound for 
the Willmore energy strictly below $8\pi$ as an additional condition. 
Finally, we establish versions of the Palais-Smale condition for
both functionals.
\end{abstract}
\maketitle

\section{Introduction}
Let $\Sigma$ be a two-dimensional, closed differentiable manifold and $p > 2$,
hence $W^{2,p}(\Sigma,\R^n) \subset C^{1,1-\frac{2}{p}}(\Sigma,\R^n)$ by the
Sobolev embedding theorem. On the open subset of immersions $W^{2,p}_{{\rm im}}(\Sigma,\R^n)$ 
we consider the two functionals 
\begin{eqnarray*}
\label{defEp}
\mathcal{E}^p(f) & = & \frac{1}{4} \int_\Sigma (1+|A|^2)^{\frac{p}{2}}\,d\mu_g,\\ 
\label{defWp}
\mathcal{W}^p(f) & = & \frac{1}{4} \int_\Sigma (1+|H|^2)^{\frac{p}{2}}\,d\mu_g.
\end{eqnarray*}
Here $g$ denotes the first fundamental form with induced 
measure $\mu_g$, $A = (D^2 f)^\perp$ the second
fundamental form, and $H$ is the mean curvature vector.
We prove regularity of critical points for both functionals.

\begin{thm}\label{regularity}
Let $f \in W^{2,p}_{{\rm im}}(\Sigma,\R^n)$ be a critical point of $\mathcal{W}^p$
or ${\mathcal E}^p$, where $2 < p < \infty$. Then local graph representations of $f$ are smooth, in fact real analytic.
\end{thm}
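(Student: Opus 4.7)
\emph{Reduction to a PDE in graph coordinates.} Since $f \in W^{2,p}_{{\rm im}}(\Sigma,\R^n) \subset C^{1,1-2/p}$, we may choose near any point a local graph representation: after a rotation of $\R^n$, $f$ is given by $x \mapsto (x, u(x))$ with $u : B \to \R^{n-2}$ of small $C^1$ norm. Writing the induced metric as $g_{ij} = \delta_{ij} + \partial_i u \cdot \partial_j u$, the quantities $|A|^2$ and $|H|^2$ become real-analytic functions of $(Du, D^2 u)$. It therefore suffices to show that $u$ is real analytic on an interior subdomain.

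\emph{Structure of the Euler--Lagrange equation.} Computing the first variation and inserting a normal test field, the critical point condition can be put in the form of a fourth-order quasilinear system
\[
\partial_\alpha \partial_\beta \bigl( a^{\alpha\beta,\gamma\delta}(Du, D^2 u)\, \partial_\gamma \partial_\delta u \bigr) = \Phi(Du, D^2 u),
\]
in which $a$ and $\Phi$ depend real-analytically on $(Du, D^2 u)$. The principal tensor $a$ is, up to geometric factors, the Hessian in the $D^2 u$-variables of the convex integrand $(1+|A|^2)^{p/2}$ (resp.\ $(1+|H|^2)^{p/2}$); by strict convexity together with the $+1$ term, $a$ satisfies a Legendre--Hadamard condition with lower bound proportional to $1$ and upper bound comparable to $(1+|D^2 u|^2)^{(p-2)/2}$. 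The equation is thus of fourth-order $p$-Laplace type, nondegenerate thanks to the $+1$.

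\emph{Main step: Nirenberg bootstrap.} To raise the regularity beyond $W^{2,p}$ we apply the difference-quotient method to the weak EL equation, testing against finite differences of cut-off derivatives of $u$. The convexity bound on $a$ yields a Caccioppoli inequality of the form
\[
\int_B \eta^{2(p-1)} \bigl(1+|D^2 u|^2\bigr)^{(p-2)/2} |D_h D^2 u|^2 \, dx \le C(\eta)
\]
uniformly in the difference parameter $h$. Sending $h \to 0$ gives $(1+|D^2 u|^2)^{(p-2)/4}\, D^2 u \in W^{1,2}_{{\rm loc}}$, equivalently $|D^2 u|^{p/2} \in W^{1,2}_{{\rm loc}}$. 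The two-dimensional Sobolev embedding pushes $D^2 u$ into $L^q_{{\rm loc}}$ for every $q < \infty$; a further Nirenberg-style iteration, combined with Calder\'on--Zygmund applied to the now-improved principal coefficient, promotes $D^2 u$ to $L^\infty_{{\rm loc}}$. This initial improvement is the hardest step: a priori the coefficient $(1+|D^2 u|^2)^{(p-2)/2}$ lies only in $L^{p/(p-2)}$, so the Caccioppoli inequality must be set up carefully, choosing test fields that simultaneously activate the $p$-Laplace convexity in the top-order derivatives and absorb the geometric lower-order contributions coming from $A$, $H$, and the induced metric.

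\emph{From $L^\infty$ to real analyticity.} Once $D^2 u \in L^\infty_{{\rm loc}}$, the principal tensor $a$ takes bounded, uniformly positive-definite values, and the EL equation becomes a uniformly elliptic fourth-order quasilinear system with real-analytic coefficients and right-hand side. A standard Schauder bootstrap, treating the equation at each stage as linear with H\"older coefficients, yields $u \in C^\infty_{{\rm loc}}$. Morrey's theorem on elliptic systems with real-analytic coefficients (cf.\ Morrey, \emph{Multiple Integrals in the Calculus of Variations}, Ch.~6) then upgrades this to real analyticity, completing the proof.
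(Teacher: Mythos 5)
Your outline captures the correct general shape for the $\mathcal{E}^p$ case, but it has two genuine gaps, and the claim to treat $\mathcal{W}^p$ and $\mathcal{E}^p$ in a single stroke fails at the level of the principal symbol.

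First, the unified treatment. For $\mathcal{E}^p$ the integrand $(1+|A|^2)^{p/2}$ is strictly convex in $D^2 u$ and the Hessian does satisfy the strong Legendre (not merely Legendre--Hadamard) bound $\partial_q a\,\xi\xi\geq\lambda V^{p-2}|\xi|^2$ used in the paper's equation \eqref{eqellipticity}. For $\mathcal{W}^p$, however, $|H|^2$ is a quadratic form in $D^2 u$ of rank $n-2$, not $3(n-2)$: the Hessian of $(1+|H|^2)^{p/2}$ in the $q$-variables is rank-deficient, so the Caccioppoli step you propose would only control the trace $g^{\alpha\beta}\partial^2_{\alpha\beta}(u_h)$, not $|D_hD^2u|^2$. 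This is precisely why the paper handles $\mathcal{W}^p$ by a separate argument, solving $L_g\varphi=|H|^{\lambda-1}H$ and inserting $\varphi$ as a test function to bootstrap the integrability of $H$ (Lemma \ref{Lp}, Theorem \ref{1st}), rather than differentiating the equation.

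Second, and more seriously, the passage from $D^2u\in\bigcap_{q<\infty}L^q_{loc}$ to $D^2u\in L^\infty_{loc}$ via ``a further Nirenberg-style iteration combined with Calder\'on--Zygmund'' does not close. After the first difference-quotient step one has $u\in W^{3,2}_{loc}$ and $V^{p/2}\in W^{1,2}_{loc}$; in two dimensions $W^{1,2}$ does not embed into $L^\infty$, and the principal coefficient $V^{p-2}$, while in every $L^q$, is still unbounded, so Calder\'on--Zygmund gives no further gain. The paper's critical input at this exact point is the Moser--Trudinger inequality, which upgrades $V^{p/2}\in W^{1,2}_{loc}$ to exponential integrability $e^{\beta V^{p-2}}\in L^1_{loc}$ for all $\beta$, combined with the Gehring-type Lemma \ref{bfz} of Bildhauer--Fuchs--Zhong; this yields $|D^3u|^2\log^\alpha(e+|D^3u|)\in L^1_{loc}$ for all $\alpha$, and a borderline (Orlicz/Lorentz) Sobolev embedding then gives $D^2u$ continuous. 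Without this mechanism the bootstrap stalls at the critical exponent. A lesser but real issue is that the first Caccioppoli inequality itself requires, as absorption tool for the lower-order terms, the preliminary Morrey-type decay $\int_{B_r}V^p\lesssim r^\beta$ of Lemma \ref{morrey}, which your test-field choice does not produce on its own; the weights $I_{p,h}\sim V^p$ multiplying $|D(u_h-l_h)|^2$ are a priori only in $L^1$, and without the decay estimate and Lemma \ref{mor} those terms cannot be absorbed.
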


In a graph representation, the Euler-Lagrange equations 
become fourth order elliptic systems, where the 
principal term has a double divergence structure. The systems 
are degenerate, in the sense that in both cases the coefficient of the 
principal term involves a $(p-2)$-th power of the curvature, which 
a priori may not be bounded.
For the functional $\mathcal{W}^p(f)$, our first step towards 
regularity is an improvement of the integrability of $H$. For this 
we employ an iteration based on a new test function argument. More
precisely, we solve the equation $L_g \varphi = |H|^{\lambda-1} H$ for 
appropriate $\lambda > 1$ and then insert $\varphi$ as a test function.
Here the operator 
$L_g = \sqrt{\det g} g^{\alpha \beta} \partial^2_{\alpha \beta}$ 
comes up in the principal term of the equation.\\
\\
Unfortunately, 
the same strategy does not apply in the case of the functional 
$\mathcal{E}^p(f)$, since then the corresponding operator is 
a full Hessian and hence the equation would be overdetermined. Instead 
we first use a hole-filling argument to show power decay for the
$L^p$ integral of the second derivatives, and derive $L^2$
bounds for the third derivatives by a difference-quotient 
argument; these steps follow closely the ideas of Morrey
\cite{morrey66} and L. Simon \cite{simon93}. In the final
critical step we adapt a Gehring type lemma due to Bildhauer, Fuchs
and Zhong \cite{bildhauer05} as well as the Moser-Trudinger 
inequality to get that the solution is of class $C^2$. Since 
it is also not immediate how to modify the $\mathcal{E}^p(f)$ 
approach to cover the functional $\mathcal{W}^p(f)$,  
we decided to include both independent arguments. 

As second issue we address the existence of minimizers for the functionals. 
By the compactness theorem of Langer \cite{langer85}, sequences of closed
immersed surfaces $f_k:\Sigma \to \R^n$ with $\mathcal{E}^p(f_k) \leq C$
subconverge weakly to an $f \in W^{2,p}_{{\rm im}}(\Sigma,\R^n)$, after
suitable reparametrization and translation. In particular, we obtain
the existence of a smooth ${\mathcal E}^p$ minimizer in the class
of immersions $f:\Sigma \to \R^n$ for $p > 2$. On the other hand, 
boundedness of $\mathcal{W}^p(f)$ is not sufficient to guarantee the
required compactness. This is easily illustrated by joining two
round spheres by a shrinking catenoid neck, showing that the 
$8\pi$ bound in the following result is optimal. 

\begin{thm} \label{comp} Let $\Sigma$ be a closed surface and 
$f_k \in W^{2,p}_{{\rm im}}(\Sigma,\R^n)$ be a sequence of 
immersions with $0 \in f_k(\Sigma)$ and 
$$
\mathcal{W}^p(f_k) \leq C \quad \mbox{ and } \quad
\liminf_{k \to \infty} \frac{1}{4} \int_{\Sigma} |H_k|^2\,d\mu_{g_k} < 8\pi.
$$
After passing to $f_k \circ \varphi_k$ for appropriate 
$\varphi_k \in C^\infty(\Sigma,\Sigma)$ and selecting a subsequence,
the $f_k$ converge weakly in $W^{2,p}(\Sigma,\R^k)$ to an 
$f \in W^{2,p}_{{\rm im}}(\Sigma,\R^n)$. In particular, the 
convergence is in $C^{1,\beta}(\Sigma,\R^n)$ for any 
$\beta < 1-\frac{2}{p}$ and we have 
$$
\mathcal{W}^p(f) \leq \liminf_{k \to \infty} \mathcal{W}^p(f_k).
$$
\end{thm}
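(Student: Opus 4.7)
The plan combines the Li-Yau inequality (which, given the strict $8\pi$ bound, forces the $f_k$ to be embeddings), an Allard-type $\eps$-regularity yielding a uniform graphical radius, and a reparametrization in the spirit of \cite{langer85}. First, from $\mathcal{W}^p(f_k)\le C$ I immediately obtain $\mu_{g_k}(\Sigma)\le 4C$ and a uniform bound on $\|H_k\|_{L^p(\mu_{g_k})}$, hence on $\int|H_k|^2\,d\mu_{g_k}$ by H\"older. Passing to the subsequence realising the liminf, $\tfrac14\int|H_k|^2\,d\mu_{g_k}\to \Lambda<8\pi$, and Simon's diameter estimate together with $0\in f_k(\Sigma)$ confines all images to a fixed ball $B_R(0)\subset\R^n$. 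The Li-Yau inequality $\tfrac14\int|H|^2\,d\mu\ge 4\pi\theta_{\max}$ then forces $\theta_{\max}<2$ for large $k$, so each $f_k$ is in fact an embedding.

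The heart of the argument is to produce a uniform graphical radius: $r_0>0$ and $M<\infty$ such that for every large $k$ and every $x\in f_k(\Sigma)$, the piece $f_k(\Sigma)\cap B_{r_0}(x)$ is a single $W^{2,p}$-graph over an affine $2$-plane with norm at most $M$. When the local $\int|A_k|^2$ is below an Allard threshold, this follows from Allard's regularity theorem combined with the $L^p$ bound on $H_k$, which upgrades $C^{1,\alpha}$ control to $W^{2,p}$. To exclude concentration of $\int|A_k|^2$, I argue by contradiction: at a hypothetical concentration point $x_k$ with $r_k\to 0$, rescale $\tilde f_k=r_k^{-1}(f_k-x_k)$; since
\[
\int|\tilde H_k|^p\,d\tilde\mu_k = r_k^{p-2}\int|H_k|^p\,d\mu_k \to 0
\]
for $p>2$, the blow-up limit is a non-flat, properly immersed, complete \emph{minimal} surface. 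Tracking back to the unrescaled sequence, such a bubble forces the varifold limit of $\{f_k\}$ to carry a point of multiplicity $\ge 2$ (in the prototypical catenoid-neck-pinch example, the two spheres touch at the collapsed neck), whence Li-Yau applied to the limit varifold yields $\liminf \tfrac14\int|H_k|^2\,d\mu_k\ge 8\pi$, contradicting $\Lambda<8\pi$.

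Uniformity of $r_0$ and $M$ then permits a Langer-type construction of diffeomorphisms $\varphi_k\in C^\infty(\Sigma,\Sigma)$ that realign the graph patches onto a fixed reference atlas, with uniformly $W^{2,p}$-bounded transition data. Weak compactness on each patch plus a diagonal extraction give $f_k\circ\varphi_k\rightharpoonup f$ in $W^{2,p}(\Sigma,\R^n)$, the uniform graphical radius guaranteeing that $f$ remains an immersion. Rellich-Kondrachov upgrades this to $C^{1,\beta}$-convergence for every $\beta<1-\tfrac{2}{p}$, and lower semicontinuity of $\mathcal{W}^p$ follows from weak $L^p$-convergence of $H_k$ combined with the convexity of $t\mapsto(1+t)^{p/2}$. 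The main obstacle I anticipate is making the concentration/bubble step rigorous in this $W^{2,p}$-setting: one needs to transfer the bubble's structure to a density-$\ge 2$ point of the varifold limit of the original sequence so that Li-Yau supplies the $8\pi$ lower bound. The Langer reparametrization is technically demanding but follows an established template.
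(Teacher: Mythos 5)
Your strategy is broadly parallel to the paper's: both arguments blow up at a degenerating graphical scale $r_k \to 0$, observe that the rescaled mean curvature tends to zero in $L^p$ because $p>2$, and derive a contradiction from the strict $8\pi$ Willmore bound via Simon's monotonicity identity. Two points differ, and the second is where your acknowledged gap lies.

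First, you reach a uniform graphical radius via Li--Yau (to get embeddedness) and Allard regularity, whereas the paper works directly with the prescribed mean curvature system \eqref{hsystem} through a perturbative $L^p$ estimate (Lemma~\ref{2.44}). Both routes are viable, but the paper's is lighter: there is no need to pass from varifold or $C^{1,\alpha}$ control back to $W^{2,p}$ graph norms, and embeddedness is never used since the monotonicity estimates hold for immersions.

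Second, your contradiction mechanism is the contrapositive of the paper's. You claim that the non-flat bubble produces a density-$\geq 2$ point in the varifold limit of the original sequence and then invoke Li--Yau for varifolds plus lower semicontinuity of $\mathcal{W}$ to get $\liminf\mathcal{W}(f_k)\geq 8\pi$. The paper instead runs the monotonicity forwards: the scale-invariant estimate
\[
\frac{\mu_{\tilde f_k}(B_R)}{\pi R^2} = \frac{\mu_{f_k}(B_{r_k R})}{\pi (r_k R)^2} \leq \frac{1}{4}\mathcal{W}(f_k) + C\,(r_kR)^{\frac{p-2}{p}}
\]
gives, after $k\to\infty$ and then $R\to\infty$, that $\limsup_{R\to\infty}\mu_{f_0}(B_R)/(\pi R^2) < 2$ for the blow-up limit $f_0$. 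By Chern--Osserman, $f_0$ then has one simple end and is a plane; the Gauss map of the rescaled surfaces therefore converges to a constant, contradicting the definition of $r_k$ as the scale at which $\|Du_{k,q}\|_{C^0}$ reaches $\eps$. This closes the argument without ever leaving the smooth category.

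The gap you flag is genuine. To make your version rigorous you would need (i) Chern--Osserman to conclude that a non-planar complete minimal immersion of finite total curvature satisfies $\lim_{R\to\infty}\mu_{f_0}(B_R)/(\pi R^2)\geq 2$; (ii) almost-monotonicity of the area ratio across scales, so that a large ratio at scale $r_kR\to 0$ propagates to a density $\geq 2$ of the limit varifold at the concentration point; and (iii) Li--Yau for integral varifolds together with lower semicontinuity of $\mathcal{W}$ under varifold convergence. These ingredients all exist, but the paper's single display absorbs (i)--(iii) and avoids any appeal to varifold convergence or varifold Li--Yau. I would adopt that more self-contained route rather than try to shore up the density-transfer step.
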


A classical approach to the construction of harmonic maps,
due to Sacks \& Uhlenbeck \cite{sacks81}, is by introducing perturbed 
functionals involving a power $p > 2$ of the gradient. One 
motivation for our analysis is an analogous approximation 
for the Willmore functional 
\begin{align}
\label{defwillmore}
\mathcal{W}(f) = \frac{1}{4} \int_\Sigma |H|^2\, d\mu_g 
= \frac{1}{4} \int_\Sigma |A|^2\, d\mu_g + \pi \chi(\Sigma).
\end{align}
The Willmore functional does not satisfy a Palais-Smale type
condition, since it is invariant under the group of M\"obius
transformations. In Section \ref{sectionps} we 
verify suitable versions of the Palais-Smale condition for 
the functionals $\mathcal{E}^p$ and $\mathcal{W}^p$ with
$p > 2$. In a forthcoming paper, we study the limit $p \searrow 2$ 
in the case of $\mathcal{E}^p(f)$, proving a concentration 
compactness alternative and a partial blowup analysis.\\ 
\\
Curvature functionals with nonquadratic growth appear
also in the work of Bellettini, Dal Maso and Paolini
\cite{BDP93} as well as Ambrosio and Masnou \cite{ambrosio03}.
However their focus is much different, for instance the latter
paper is motivated by applications to image restoration.

\section{The Euler Lagrange Equations}\label{euler}
Here we compute in local coordinates the Euler Lagrange equations of the 
functionals $\mathcal{E}^p(f)$ and $\mathcal{W}^p(f)$. For an immersed
surface the fundamental forms are 
$$
g_{\alpha \beta} = \langle \partial_\alpha f,\partial_\beta f \rangle 
\quad \mbox{ and } \quad 
A_{\alpha \beta} = P^\perp (\partial^2_{\alpha \beta} f). 
$$
Here $P^\perp$ is the projection onto the normal space given by 
$$
P^\perp =  {\rm Id} - g^{\alpha \beta} \langle \partial_\alpha f, \cdot \rangle  \partial_\beta f.
$$
We compute further 
$$
|A|^2 = 
g^{\alpha \gamma} g^{\beta \lambda} 
\big\langle P^\perp \partial^2_{\alpha \beta} f,\partial^2_{\gamma \lambda} f \big\rangle
\quad \mbox{ and } \quad 
|H|^2 = 
g^{\alpha \beta} g^{\gamma \lambda}              
\big\langle P^\perp \partial^2_{\alpha \beta} f,\partial^2_{\gamma \lambda} f \big\rangle.
$$
On the open set of $W^{2,p}$ immersions, both $\mathcal{E}^p$ and $\mathcal{W}^p$
are differentiable in the sense of Fr\'{e}chet. The derivative of $\mathcal{E}^p$ is given by
\begin{eqnarray*}
\label{Ederivative}
D{\mathcal E}^p(f)\phi & = & \frac{p}{4} \int_\Sigma (1+|A|^2)^{\frac{p-2}{2}} 
g^{\alpha \gamma} g^{\beta \lambda} 
\big\langle P^\perp \partial_{\alpha \beta}^2 f, \partial^2_{\gamma \lambda} \phi \rangle \sqrt{\det g}\\
\nonumber
& + & \frac{p}{8} \int_\Omega (1+|A|^2)^{\frac{p-2}{2}}
\Big\langle \frac{\partial (g^{\alpha \gamma} g^{\beta \lambda} P^\perp)}{\partial p^k_\mu} 
\partial_\mu \phi^k \partial^2_{\alpha \beta} f, \partial^2_{\gamma \lambda} f \Big\rangle \sqrt{\det g}\\
& + & \frac{1}{4} \int_{\Omega} (1+|A|^2)^{\frac{p}{2}}
\frac{\partial \sqrt{\det g}}{\partial p^k_\mu} \partial_\mu \phi^k.
\end{eqnarray*}
In particular if $f(x) = \big(x,u(x)\big)$ where $u \in W^{2,p}(\Omega,\R^{n-2})$, then $f$ is a critical
point of $\mathcal{E}^p$ if and only if $u$ is a weak solution of the system
\begin{equation}
\label{eqgraphsystem}
\partial^2_{\alpha \beta} \big(a^{\alpha \beta}_i(Du,D^2u)\big)
+ \partial_\alpha \big(b^\alpha_i(Du,D^2u)\big) = 0 \quad (1 \leq i \leq n-2),
\end{equation}
where the coefficients are given by
$$
a^{\alpha \beta}_i(Du,D^2u) = (1+|A|^2)^{\frac{p-2}{2}}
\sqrt{\det g}\, g^{\alpha \gamma} g^{\beta \lambda} 
\big(\delta_{ij} - g^{\mu \nu} \partial_\mu u^i \partial_\nu u^j)  \partial^2_{\gamma \lambda} u^j
$$
\quad $b^\alpha_i(Du,D^2 u) = $
\begin{eqnarray*}
&& - \frac{1}{2} (1+|A|^2)^{\frac{p-2}{2}}
\frac{\partial \big(g^{\gamma \mu}g^{\lambda \nu} (\delta_{jk} 
- g^{\sigma \tau} p^j_\sigma p^k_\tau)\big)}{\partial p^i_\alpha}  
\partial^2_{\gamma \lambda} u^j \partial^2_{\mu \nu} u^k \sqrt{\det g}\\ 
&& - \frac{1}{p} (1+|A|^2)^{\frac{p}{2}} \frac{\partial \sqrt{\det g}}{\partial p_\alpha^i}.
\end{eqnarray*}
For $|p| \leq \Lambda$ and $V = (1+|q|^2)^{\frac{1}{2}}$, where $p,q$ are the variables 
corresponding to $Du,D^2 u$, one easily checks the bounds
\begin{eqnarray*}
|D_q a| & \leq & C(\Lambda)\, V^{p-2}\\
|a| + |D_p a| + |D_q b| & \leq & C(\Lambda)\, V^{p-1}\\
|b| + |D_p b| & \leq & C(\Lambda)\,  V^p.
\end{eqnarray*}
Moreover, the system satisfies the ellipticity condition 
$$
\frac{\partial a_i^{\alpha \beta}}{\partial q^j_{\gamma \lambda}}  
\xi^i_{\alpha \beta} \xi^j_{\gamma \lambda} \geq \lambda V^{p-2} |\xi|^2  
\quad \mbox{ where } \lambda = \lambda(\Lambda) > 0,
$$
For the first variation of $\mathcal{W}^p(f)$ one obtains 
\begin{eqnarray*}
\label{Wderivative}
D{\mathcal W}^p(f)\phi & = & \frac{p}{4} \int_\Sigma (1+|H|^2)^{\frac{p-2}{2}}
\langle H, g^{\gamma \lambda} \partial^2_{\gamma \lambda } \phi \rangle\, \sqrt{\det g}\\
\nonumber
& + & \frac{p}{8} \int_\Omega (1+|H|^2)^{\frac{p-2}{2}}
\Big\langle \frac{\partial (g^{\alpha \beta} g^{\gamma \lambda} P^\perp)}{\partial p_\mu^k} \partial_\mu \phi^k
\partial^2_{\alpha \beta} f, \partial^2_{\gamma \lambda} f \Big \rangle \,\sqrt{\det g}\\
& + & \frac{1}{4} \int_{\Omega} (1+|H|^2)^{\frac{p}{2}}
\frac{\partial \sqrt{\det g}}{\partial p^k_\mu} \partial_\mu \phi^k.
\end{eqnarray*}
Putting $L_g \phi = \sqrt{\det g}\, g^{\gamma \lambda} \partial^2_{\gamma \lambda} \phi$ 
the first variation takes the form 
\begin{align}
D{\mathcal W}^p(f)\phi = 
\frac{p}{4} \int_{\Omega} (1+|H|^2)^{\frac{p-2}{2}} \langle H, L_g \phi \rangle
+ \int_\Omega B^\alpha_i(Df,D^2 f) \partial_\alpha \phi^i, \label{EL}
\end{align}
where 
\begin{eqnarray*}
 B^\alpha_i (Df,D^2 f) & = & \frac{p}{8} (1+|H|^2)^{\frac{p-2}{2}} 
\Big\langle \frac{\partial (g^{\gamma \lambda} g^{\mu \nu } P^\perp)}{\partial p_\alpha^i}
\partial^2_{\gamma \lambda} f, \partial^2_{\mu \nu} f \Big\rangle \,\sqrt{\det g}\\
& & + \frac{1}{4} (1+|H|^2)^{\frac{p}{2}} \frac{\partial \sqrt{\det g}}{\partial p^i_\alpha}. 
\end{eqnarray*}
When passing to graphs we have under the assumption $|p| \leq \Lambda$ 
\begin{align}
|B| + |D_p B|  \leq C(\Lambda)\, V^{p-2} |q|^2 \quad \mbox{ and } \quad
|D_q B| \leq C(\Lambda)\, V^{p-2} |q|. \label{EL2}
\end{align}

\section{Regularity of critical points}\label{smoothcrit}
\setcounter{equation}{0}

\subsection{The functional $\mathcal{W}^p$}

For $\Omega \subset \R^2$ and $p > 2$, let $f:\Omega \to \R^n$ be the graph
of a function $u \in W^{2,p}(\Omega,\R^{n-2})$. Recall from \eqref{EL} that 
$f$ is a critical point of $\mathcal{W}^p$ if and only if
\begin{equation}\label{e}
  \int_{\Omega} \langle \mathcal{H},L_g\varphi\rangle
+\int_{\Omega} B_i^\alpha(D u,D^2 u)\partial_\alpha \varphi^i =0 \quad
\mbox{ for all } \varphi\in W^{2,p}_0(\Omega,\R^n).
\end{equation}
Here $\mathcal{H}=(1+|H|^2)^{\frac{p}{2}-1}H$ and the functions $B^i_\alpha$ satisfy 
the bounds \eqref{EL2}. We have the following result. 
%In the following we simply write $L$ instead of $L_g$.

\begin{thm}\label{regularityW}
Weak solutions $u\in W^{2,p}(\Omega,\R^{n-2})$ of \eqref{e} are smooth.
\end{thm}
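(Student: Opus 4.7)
The plan is to prove regularity in two phases. First, I want to bootstrap the integrability of $H$ (and simultaneously of $A$) by the test function argument flagged in the introduction, which exploits the principal operator $L_g$ appearing in \eqref{e}. Second, once $u$ lies in $W^{2,q}_{\rm loc}$ for every $q<\infty$, a standard nonlinear elliptic bootstrap produces $u\in C^\infty$, and real analyticity then follows from Morrey's theorem for elliptic systems with analytic coefficients.

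For the test function step, note that $p>2$ gives $u\in C^{1,1-2/p}_{\rm loc}$, so the induced metric $g$ is continuous and $L_g$ is uniformly elliptic with continuous coefficients on sufficiently small balls $B_r\subset\Omega$. Fix $\eta\in C_c^\infty(B_r)$ and, for a parameter $\lambda>1$ to be chosen, solve the linear Dirichlet problem
\begin{equation*}
L_g\varphi \;=\; \eta\,|H|^{\lambda-1}H \quad\text{on } B_r,\qquad \varphi=0\text{ on }\partial B_r.
\end{equation*}
Calder\'on-Zygmund for $L_g$ gives $\varphi\in W^{2,s}_0(B_r)$ whenever $\eta|H|^\lambda\in L^s$, and two-dimensional Sobolev embedding then controls $\nabla\varphi$ (in $L^\infty$ when $s>2$, in a large $L^r$ otherwise). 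Inserted in \eqref{e}, the principal term becomes
\begin{equation*}
\int_{B_r}(1+|H|^2)^{\tfrac{p-2}{2}}\,|H|^{\lambda+1}\,\eta,
\end{equation*}
from which $|H|^{\lambda+p-1}\eta$ can be extracted, while the lower-order contribution is controlled by H\"older's inequality using the bound \eqref{EL2} for $B$. A Young inequality then delivers the improved integrability $H\in L^{\lambda+p-1}_{\rm loc}$.

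Starting from $H\in L^p_{\rm loc}$ the step is iterated. A crucial companion move is to translate each gain in the integrability of $H$ into a matching gain for $D^2u$: in graph coordinates $H$ equals, up to harmless terms involving $Du$, the trace $g^{\alpha\beta}\partial^2_{\alpha\beta}u$, so a standard Calder\'on-Zygmund estimate for the linear second-order operator $g^{\alpha\beta}\partial^2_{\alpha\beta}$ upgrades $u$ from $W^{2,p}$ to $W^{2,q}$ whenever $H\in L^q_{\rm loc}$ is known. This enhanced $W^{2,q}$-regularity in turn weakens the constraint on the admissible $\lambda$ in the test function construction, and alternating the two improvements yields $u\in W^{2,q}_{\rm loc}$ for every finite $q$.

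Once this integrability is established, a classical difference-quotient argument applied to \eqref{e} produces $D^3u\in L^q_{\rm loc}$; the system is then uniformly (non-degenerately) elliptic on bounded sets of $Du$, so a standard nonlinear bootstrap on the Euler-Lagrange system gives $u\in C^\infty$, and Morrey's theorem upgrades this to real analyticity. The main difficulty I anticipate is the careful balancing in the iteration: the Sobolev constraint needed to control $\nabla\varphi$ caps the admissible $\lambda$ at each stage, and a naive application of the test function step alone plateaus at an integrability of order $2p-2$; genuine progress therefore requires interleaving the $H$-improvement with the $D^2u$-improvement obtained from Calder\'on-Zygmund, and keeping track of constants so the iteration does not degenerate.
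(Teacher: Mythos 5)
Your Phase~1 matches the paper's strategy: solve $L_g\varphi=|H|^{\lambda-1}H$ (the paper truncates to $|H|_A^{\lambda-1}H$ to stay in the right $L^q$ space, then lets $A\to\infty$, but this is a technicality), insert $\varphi$ as a test function, and alternate with the Calder\'on--Zygmund estimate for the prescribed mean curvature system to push $u$ up to $W^{2,q}_{\rm loc}$ for every $q<\infty$. That part is essentially the paper's Theorem~\ref{1st}.

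The gap is in Phase~2. Knowing $u\in W^{2,q}_{\rm loc}$ for all finite $q$ does \emph{not} render the system ``uniformly (non-degenerately) elliptic'': the ellipticity weight is $V^{p-2}=(1+|D^2u|^2)^{(p-2)/2}$, so the degeneracy sits in $D^2u$, not in $Du$ (and $Du$ was bounded from the start, by $W^{2,p}\hookrightarrow C^1$). Since $D^2u\in L^q$ for all $q$ is not the same as $D^2u\in L^\infty$, the coefficient $(1+|H|^2)^{(p-2)/2}$ is still unbounded a priori, and a ``standard nonlinear bootstrap'' does not apply. Likewise, the assertion that ``a classical difference-quotient argument produces $D^3u\in L^q_{\rm loc}$'' (for all $q$) is precisely what needs to be proved and is not a routine consequence. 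The paper's actual route is: (i) difference quotients plus a linear auxiliary problem give $\mathcal{H}=(1+|H|^2)^{(p-2)/2}H\in W^{1,2}_{\rm loc}$ (Proposition~\ref{2nd}), hence $u\in W^{3,s}_{\rm loc}$ only for $s<2$ (Corollary~\ref{3s}); (ii) a second, more delicate test-function argument --- solving $\tilde L\varphi_1=|\mathcal{H}_h|^\gamma\mathcal{H}_h$ and testing the differenced equation --- yields the strict gain $\mathcal{H}\in W^{1,2+\gamma}_{\rm loc}$ for some $\gamma>0$. It is only this $\gamma>0$ that lets Sobolev embedding give $\mathcal{H}\in L^\infty_{\rm loc}$ and $u\in W^{3,2+\gamma}_{\rm loc}\subset C^{2,\beta}_{\rm loc}$, after which the coefficients are H\"older continuous and Schauder theory closes the argument. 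Your proposal skips exactly this passage from $W^{3,s}_{\rm loc}$, $s<2$, to $W^{3,2+\gamma}_{\rm loc}$, which is the crux of the proof.
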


\subsubsection{$W^{2,q}$-regularity}

We start by stating a regularity property for the mean curvature system. 
For a graph of a function  $u \in W^{2,p}(\Omega,\R^{n-2})$, the weak 
mean curvature satisfies for $j = 1,\ldots,n-2$ the formula
\begin{equation}
\label{hsystem}
g^{\alpha \beta} \big(\delta_{ij} - 
g^{\lambda \mu} \partial_\lambda u^i \partial_\mu u^j\big) \partial^2_{\alpha \beta} u^i 
= H^{j+2}. 
\end{equation}
Since $p > 2$ the left hand side may be viewed as a linear operator of the form
$a^{\alpha \beta}_{ij} \partial^2_{\alpha \beta} u^i$, where the coefficients 
are H\"older continuos with exponent $1- \frac{2}{p} > 0$ and the ellipticity 
constant is controlled by the $W^{2,p}$-norm of the function $u$.
In particular, if we know $H \in L^q(\Omega,\R^n)$ for some $q \in (p,\infty)$,
then standard $L^q$ theory yields $u \in W^{2,q}_{loc}(\Omega,\R^{n-2})$ 
together with a local estimate
\begin{align}
\label{estmean}
||u||_{W^{2,q}(\Omega')} \leq C(p,q,\Lambda)\, \big(||H||_{L^q(\Omega)} + 1\big) \quad 
\mbox{ if } \|u\|_{W^{2,p}(\Omega)} \leq \Lambda. 
\end{align}
The dependence on the domains $\Omega' \subset \!\! \subset \Omega$ is not
mentioned explicitely here. 

\begin{lem}\label{Lp}
Let $u\in W^{2,p}(\Omega,\R^{n-2})$ be a weak solution of \eqref{e}.
Then for any $\varphi\in W^{2,p}(\Omega)$ and any test function $\eta \in C^2_0(\Omega)$ 
we have
$$
\int_\Omega \eta  \langle \mathcal{H},L_g\varphi \rangle
\leq C \int_{{\rm spt\,} \eta} (1+|D^2u|^2)^{\frac{p}{2}}(|\varphi|+|D\varphi|),
\quad \mbox{ where } C = C(\|\eta\|_{C^2}).
$$
\end{lem}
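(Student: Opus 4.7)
The plan is to test the weak Euler-Lagrange equation \eqref{e} against $\psi := \eta\varphi$, viewing $\varphi$ as $\R^n$-valued. Since $\eta$ has compact support in $\Omega$ and $\varphi \in W^{2,p}$, the product $\psi$ lies in $W^{2,p}_0(\Omega,\R^n)$, so $\psi$ is admissible. I would then expand $L_g\psi$ by the Leibniz rule, isolate the target integral, and control the remaining terms using the pointwise bounds already collected in Section \ref{euler}.

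Concretely, since $L_g$ is a scalar second-order operator (applied componentwise to $\varphi$), the product rule gives
$$L_g(\eta\varphi) = \eta\, L_g\varphi + \sqrt{\det g}\, g^{\gamma\lambda}\bigl(2\,\partial_\gamma\eta\,\partial_\lambda\varphi + \varphi\,\partial^2_{\gamma\lambda}\eta\bigr).$$
Substituting $\psi = \eta\varphi$ into \eqref{e} and rearranging yields
\begin{align*}
\int_\Omega \eta\,\langle \mathcal{H}, L_g\varphi\rangle
&= -\int_\Omega \sqrt{\det g}\, g^{\gamma\lambda}\bigl\langle \mathcal{H},\; 2\,\partial_\gamma\eta\,\partial_\lambda\varphi + \varphi\,\partial^2_{\gamma\lambda}\eta\bigr\rangle \\
&\quad -\int_\Omega B^\alpha_i(Du,D^2u)\bigl(\partial_\alpha\eta\,\varphi^i + \eta\,\partial_\alpha\varphi^i\bigr).
\end{align*}
Passing to absolute values on the right converts this identity into the desired upper bound, and all integrands on the right are supported in $\operatorname{spt}\eta$.

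Set $\Lambda := \|u\|_{W^{2,p}(\Omega)}$, so $|Du| \leq C(\Lambda)$ by the Sobolev embedding $W^{2,p}\hookrightarrow C^{1,1-2/p}$. Then $g^{\alpha\beta}$ and $\sqrt{\det g}$ are bounded by $C(\Lambda)$; from $|H|\leq C(\Lambda)|D^2 u|$ we get the pointwise bound
$$|\mathcal{H}| \leq (1+|H|^2)^{(p-1)/2} \leq C(\Lambda)\,(1+|D^2u|^2)^{(p-1)/2} \leq C(\Lambda)\,(1+|D^2u|^2)^{p/2};$$
and the estimate \eqref{EL2} yields $|B| \leq C(\Lambda)\,(1+|D^2u|^2)^{p/2}$. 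Combined with the trivial bound $|\partial^j\eta|\leq \|\eta\|_{C^2}$ for $j\leq 2$, every term on the right is dominated pointwise by $C(\Lambda,p,\|\eta\|_{C^2})\,(1+|D^2u|^2)^{p/2}(|\varphi|+|D\varphi|)$, which gives the claim.

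I do not foresee any real obstacle: this is a routine testing identity, and the key substitution $\psi = \eta\varphi$ is forced by the statement. The only small bookkeeping point is to absorb the $(p-1)/2$ power coming from $\mathcal{H}$ into the stated $(p/2)$ power on the right, which is immediate since $1+|D^2u|^2 \geq 1$. The lemma plays the role of a Caccioppoli-style estimate that will later be fed into the iteration giving higher integrability of $H$.
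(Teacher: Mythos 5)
Your proposal is correct and follows essentially the same route as the paper: test \eqref{e} with $\psi = \eta\varphi$, expand $L_g(\eta\varphi)$ by the Leibniz rule, and bound the resulting terms via $|\mathcal{H}|\le C(1+|D^2u|^2)^{(p-1)/2}$ and the estimate \eqref{EL2} for $|B|$. The only cosmetic difference is that you coarsen the $\mathcal{H}$-terms to the $p/2$ power at the end, whereas the paper records them separately at the sharper $(p-1)/2$ power; the lemma's conclusion needs only $p/2$, so this is harmless.
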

\begin{proof}
Expanding 
$$
L_g(\eta\varphi)=
\eta L_g\varphi + \varphi L_g \eta + 2\sqrt{\det g}g^{\alpha \beta}\partial_\alpha \eta \partial_\beta \varphi,
$$
we see by combining with (\ref{e}) and \eqref{EL2} that
\begin{eqnarray*}
\int_{\Omega}\eta \langle \mathcal{H},L_g\varphi\rangle & \leq & 
C\int_\Omega (1+|D^2u|^{2})^{\frac{p}{2}}\big(|D\eta||\varphi|+|\eta| |D \varphi|\big)\\
&& + C \int_\Omega (1+|D^2u|^{2})^{\frac{p-1}{2}}\big(|D^2\eta||\varphi|+|D\eta| |D \varphi|\big).
\end{eqnarray*}
This implies the lemma.
\end{proof}
We are now ready to improve the integrability of $D^2 u$. 
\begin{thm}\label{1st}
Let $u \in W^{2,p}(\Omega,\R^{n-2})$ be a weak solution of \eqref{e} where
$p > 2$. Then $u\in W^{2,q}_{loc}(\Omega,\R^{n-2})$ for any $q \in [p,\infty)$. 
\end{thm}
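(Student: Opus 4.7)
The plan is to run a bootstrap iteration on the integrability exponent of $H$: assuming $H \in L^r_{\rm loc}(\Omega)$ for some $r \geq p$, I will prove $H \in L^{r'}_{\rm loc}(\Omega)$ for some strictly larger $r' > r$, iterating until $r' \to \infty$. Once this is achieved, the conclusion $u \in W^{2,q}_{\rm loc}(\Omega, \R^{n-2})$ for every $q \in [p, \infty)$ follows immediately from the mean curvature system \eqref{hsystem} combined with the estimate \eqref{estmean}.

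For the inductive step I use the test function anticipated in the introduction. Fix concentric balls $B_\rho \subset\!\subset B_{2\rho} \subset\!\subset \Omega$ together with cutoffs $\eta, \eta_0 \in C^2_0(B_{2\rho})$ satisfying $\eta \equiv 1$ on $B_\rho$ and $\eta_0 \equiv 1$ on $\on{spt}\eta$. For a parameter $\lambda > 1$ to be chosen, let $\varphi \in W^{2, r/\lambda}_0(B_{2\rho}, \R^n)$ solve the componentwise Dirichlet problem
$$
L_g \varphi = \eta_0 |H|^{\lambda - 1} H \quad \text{in } B_{2\rho}, \qquad \varphi = 0 \quad \text{on } \partial B_{2\rho}.
$$
Since $u \in W^{2,p}$ with $p > 2$ forces $g \in C^{0, 1-2/p}$, the operator $L_g$ has H\"older continuous coefficients, and classical $L^s$-theory for non-divergence elliptic operators yields
$$
\|\varphi\|_{W^{2, r/\lambda}(B_{2\rho})} \leq C \|H\|^\lambda_{L^r(B_{2\rho})}
$$
provided $r/\lambda > 1$.

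Inserting this $\varphi$ into Lemma \ref{Lp}, the left hand side becomes $\int \eta (1+|H|^2)^{(p-2)/2} |H|^{\lambda + 1}$, which up to lower order dominates $\int_{B_\rho} |H|^{p + \lambda - 1}$. The inductive hypothesis combined with \eqref{estmean} gives $(1+|D^2u|^2)^{p/2} \in L^{r/p}_{\rm loc}$, so H\"older's inequality with dual exponents $r/p$ and $r/(r-p)$ on the right hand side reduces matters to controlling $\|\varphi\|_{L^{r/(r-p)}(B_{2\rho})} + \|D\varphi\|_{L^{r/(r-p)}(B_{2\rho})}$. The two-dimensional Sobolev inequality applied to $D\varphi \in W^{1, r/\lambda}(B_{2\rho})$ gives $D\varphi \in L^{2r/(2\lambda - r)}(B_{2\rho})$ when $r/\lambda < 2$, and this supplies the required norm precisely when $\lambda < 3r/2 - p$. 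Choosing $\lambda$ just below this threshold, one obtains
$$
\int_{B_\rho} |H|^{p + \lambda - 1} \leq C \bigl(1 + \|H\|_{L^r(B_{2\rho})}^{\lambda + p}\bigr),
$$
so $H \in L^{p + \lambda - 1}_{\rm loc}(\Omega)$. The recursion $r_{k+1} \approx 3 r_k / 2 - 1$ then diverges starting from $r_0 = p > 2$.

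The main obstacle is the compatibility of the two constraints $\lambda > 1$ and $\lambda < 3r/2 - p$. At the base of the iteration $r = p$, the latter reduces to $\lambda < p/2$, which leaves a nonempty admissible range exactly when $p > 2$; this is precisely where the superquadratic hypothesis enters in a sharp way. Once past the initial step, the iteration proceeds without further difficulty. A secondary technical point, the $L^s$ solvability of the Dirichlet problem for the non-divergence operator $L_g$, follows from standard Calder\'on--Zygmund theory given the H\"older regularity of the coefficients.
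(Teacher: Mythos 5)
Your overall strategy --- solving $L_g\varphi = |H|^{\lambda-1}H$ and inserting $\varphi$ into the weak formulation via Lemma \ref{Lp} to boost the integrability of $H$, then invoking the mean curvature system and \eqref{estmean} --- is exactly the route taken in the paper. The exponent bookkeeping (the two regimes $\lambda < r/2$ and $r/2 < \lambda < 3r/2 - p$, the recursion $r \mapsto 3r/2 - 1$, and the observation that $p>2$ is sharply needed to open the window at the base step) also matches.

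However, there is a genuine gap in the admissibility of your test function, and it is precisely the point where the paper does something you omit. You define $\varphi$ as the $W^{2,r/\lambda}$-solution of $L_g\varphi = \eta_0|H|^{\lambda-1}H$, but Lemma \ref{Lp} requires $\varphi \in W^{2,p}$, and the underlying weak identity \eqref{e} is only established for test functions in $W^{2,p}_0$. Since $\lambda > 1$, you have $r/\lambda < r$, and at the base step $r = p$ this gives $r/\lambda < p$, so your $\varphi$ is \emph{not} admissible. Nor can one rescue the argument by extending \eqref{e} by density: with $u \in W^{2,q}$ one has $\mathcal H \in L^{q/(p-1)}$, and for $\int\langle\mathcal H, L_g\varphi\rangle$ to converge with $L_g\varphi \in L^{q/\lambda}$ one is forced into $\lambda \le q-p+1$. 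But the iteration only makes progress when $p+\lambda-1 > q$, i.e.\ $\lambda > q - p + 1$, so these two constraints are mutually exclusive --- without an extra idea, the untruncated scheme never gains a single exponent.

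The paper circumvents this by truncating: one sets $|H|_A := \min(|H|, A)$ and solves $L_g\varphi = |H|_A^{\lambda-1}H$. The right-hand side is then bounded (hence in $L^q$ for the current $q \ge p$), so $L^q$-theory gives $\varphi \in W^{2,q} \subset W^{2,p}$, and $\eta\varphi$ is a legitimate test function. The scale-adapted estimate $\|\varphi\|_{W^{2,q/\lambda}} \le C\||H|_A^{\lambda-1}H\|_{L^{q/\lambda}} \le C\|H\|_{L^q}^\lambda$ is \emph{independent of $A$} (because $|H|_A \le |H|$), so the whole chain produces the $A$-uniform bound $\int |H|^p|H|_A^{\lambda-1} \le C$, and monotone convergence as $A\nearrow\infty$ yields $H \in L^{p+\lambda-1}_{\rm loc}$. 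You should insert this truncation into your argument. A smaller second point: your choice ``$\lambda$ just below $3r/2 - p$'' violates the requirement $\lambda < r$ (needed for $r/\lambda > 1$) once $r \ge 2p$; the paper handles this by switching to $\lambda = q - p + 2$ in that range, which still beats the threshold $q - p + 1$ and so continues to gain one unit per step.
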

\begin{proof}
Assume we know already $\|u\|_{W^{2,q}(B_r)} \leq \Lambda$ where $q \geq p$.
For $|H|_A=\min(|H|,A)$ with $A > 0$ and a parameter $\lambda \in (1,q)$, we use
$L^q$-theory to obtain a solution $\varphi\in W^{2,q}\cap W^{1,q}_0(B_r,\R^n)$ 
of the linear equation
$$
L_g \varphi=|H|_A^{\lambda-1}H.
$$
As $1 < \frac{q}{\lambda} < \infty$ the function $\varphi$ satisfies 
$$
||\varphi||_{W^{2,\frac{q}{\lambda}}(B_r)} \le C |||H|_A^{\lambda-1}H||_{L^{\frac{q}{\lambda}}(B_r)} \le C(\Lambda). 
$$
By the Sobolev embedding theorem, we have for $\lambda < \frac{q}{2}$ the estimate
$$
\|\varphi\|_{C^1(B_r)} \leq C(\Lambda),
$$
while for $\frac{q}{2} < \lambda < q$ we get instead
$$
\|\varphi\|_{W^{1,s}(B_r)} \leq C(\Lambda) \quad \mbox{ for } s = \frac{2q}{2\lambda-q} \in [1,\infty).
$$
Now Lemma \ref{Lp} implies that
$$
\int_{B_{\frac{r}{2}}} |H|^p\, |H|_A^{\lambda -1} \leq C \int_{B_r} 
(1+|D^2 u|^2)^{\frac{p}{2}} \big(|\varphi| + |D\varphi| \big)  \leq C(\Lambda),
$$
under the condition that either $1 < \lambda < \frac{q}{2}$, or that $\frac{q}{2} < \lambda < q$ with
$$
\frac{p}{q} + \frac{1}{s} \leq 1 \quad \Leftrightarrow \quad \lambda \leq \frac{3q}{2}-p.
$$
Letting $A \nearrow \infty$ we get $H \in L^{p+\lambda -1}(B_{\frac{r}{2}})$, and then obtain 
$u \in W^{2,p + \lambda-1}(B_{\frac{r}{4}},\R^{n-2})$ from \eqref{estmean}. 
We can now set up an iteration to get $u \in W^{2,q}_{loc}(\Omega)$ for all $q < \infty$.
As initial step we choose $q = p$ and $1 < \lambda  < \frac{p}{2}$, which brings us to 
$q < \frac{3p}{2}-1$. For $p < q < 2p$ we can take $\lambda = \frac{3q}{2}-p$, improving the 
exponent to $\frac{3q}{2}-1$. After finitely many iterations, we arrive at some $q > 2p$. 
Now we continue with $\frac{q}{2} < \lambda = q-p+2 < \frac{3q}{2}-p$ and obtain the desired higher 
integrability. 
\end{proof}

\subsubsection{$W^{1,2}_{loc}$-Regularity of $\mathcal{H}$}

In this subsection we use difference quotient methods in order to show that $\mathcal{H}\in W^{1,2}_{loc}$. For $h>0$, $f:\Omega \to \R^k$ and fixed $\nu\in \{1,2\}$ we define
\[
 f_h(x) = \frac{1}{h} \left(f(x+he_\nu) - f(x)\right)
\]

In the first Lemma we compare the difference quotients of $\mathcal{H}$ with the ones of $H$ and $|H|^2$. 
\begin{lem}\label{lem3}
Let $u$ be as in Theorem \ref{regularityW} and let $\Omega' \subset \! \! \subset \Omega$. Then, for every $1\le q<\infty$ and for all $h>0$ small enough we have for all $x\in \Omega'$
$$|(|H|^2)_h|(x)+|H_h|(x)\leq \Phi(x,h)|\mathcal{H}_h|(x),$$
where
$$\int_{B_r}|\Phi(x,h)|^q dx\le C(q).$$
\end{lem}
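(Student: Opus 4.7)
The starting observation is that $\mathcal{H}=F(H)$ where $F:\mathbb{R}^n\to\mathbb{R}^n$ is the radial map
$F(v)=(1+|v|^2)^{\frac{p-2}{2}}v$. A direct calculation gives
$$
DF(v)=(1+|v|^2)^{\frac{p-2}{2}}\mathrm{Id}+(p-2)(1+|v|^2)^{\frac{p-4}{2}}v\otimes v,
$$
so for $p>2$ one has $\langle DF(v)\xi,\xi\rangle\ge (1+|v|^2)^{\frac{p-2}{2}}|\xi|^2$. My plan is to run the standard monotonicity argument for this perturbed $p$-Laplace-type map: integrating along the segment $[a,b]$ and using that $|a+t(b-a)|\lesssim |a|+|b|$ yields the two-sided bounds
\begin{align*}
c(1+|a|+|b|)^{p-2}|b-a|^2 &\le \langle F(b)-F(a),b-a\rangle, \\
|F(b)-F(a)| &\le C(1+|a|+|b|)^{p-2}|b-a|,
\end{align*}
and combining them via Cauchy--Schwarz gives the key pointwise estimate
$$
|F(b)-F(a)|\ge c\,(1+|a|+|b|)^{p-2}|b-a|.
$$

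Applied with $a=H(x)$ and $b=H(x+he_\nu)$ and divided by $h$, this immediately produces
$$
|H_h|(x)\le C\,\bigl(1+|H(x)|+|H(x+he_\nu)|\bigr)^{-(p-2)}|\mathcal{H}_h|(x),
$$
and since $p>2$, the prefactor is uniformly bounded, so the $|H_h|$ piece is dominated by a constant times $|\mathcal{H}_h|$. For the second piece I would simply telescope
$$
\bigl(|H|^2\bigr)_h(x)=\bigl(H(x+he_\nu)+H(x)\bigr)\cdot H_h(x),
$$
which feeds back into the bound above to give
$$
\bigl|(|H|^2)_h\bigr|(x)\le C\bigl(1+|H(x)|+|H(x+he_\nu)|\bigr)^{3-p}|\mathcal{H}_h|(x).
$$

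So the natural choice is
$$
\Phi(x,h):=C\bigl[1+\bigl(1+|H(x)|+|H(x+he_\nu)|\bigr)^{3-p}\bigr].
$$
When $p\ge 3$ this is bounded, hence in every $L^q$ trivially. When $2<p<3$ the exponent $3-p$ is positive, and one estimates
$$
\int_{B_r}|\Phi(x,h)|^q\,dx\le C(q)\bigl(1+\|H\|_{L^{(3-p)q}(B_{r+h})}^{(3-p)q}\bigr),
$$
which is uniformly bounded in $h$ for all small $h>0$ by Theorem~\ref{1st} (after passing to graphs, $u\in W^{2,s}_{\mathrm{loc}}$ for every $s<\infty$, hence $H\in L^s_{\mathrm{loc}}$ for every $s<\infty$).

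The only delicate point is the algebraic inequality for $F$; everything else is a telescoping and an application of the higher integrability. The pointwise inequality itself is standard from the theory of degenerate elliptic systems, but it must be stated in the correct $(1+|a|+|b|)$ form rather than the homogeneous $(|a|+|b|)$ one, because for small $H$ the weight has to remain harmless.
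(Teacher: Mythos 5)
Your proof is correct, and it takes a genuinely different route from the paper's. The paper expands $|\mathcal{H}|^2=(1+|H|^2)^{p-2}|H|^2$ and applies the mean value theorem in two separate steps: first to $(|\mathcal{H}|^2)_h$ to observe that the resulting bracket is $\geq 1$ (which gives $|(|H|^2)_h|\le|(|\mathcal{H}|^2)_h|\le |\mathcal{H}_h|(|\mathcal{H}|+|\mathcal{H}(\cdot+he_\nu)|)$), and then again to $\mathcal{H}_h$ itself, solving for $H_h$ and feeding in the first estimate. You instead invoke the standard monotonicity inequality for the radial vector field $F(v)=(1+|v|^2)^{(p-2)/2}v$ of degenerate $p$-Laplacian type, obtaining
$$
|F(b)-F(a)|\ge c\,(1+|a|+|b|)^{p-2}|b-a|
$$
directly from the lower bound on $\langle F(b)-F(a),b-a\rangle$ and Cauchy--Schwarz (the upper bound you state is not actually used). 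This gives the cleaner estimate $|H_h|\le C|\mathcal{H}_h|$ with a \emph{bounded} coefficient, whereas the paper's $H_h$-bound carries a genuinely $h$-dependent weight controlled via Theorem~\ref{1st}. Your approach isolates exactly where higher integrability is needed: only for $(|H|^2)_h$, and only in the range $2<p<3$ where the exponent $3-p$ is positive; both proofs end up invoking Theorem~\ref{1st} for the $L^q$ bound on $\Phi$, but your version localizes the dependence. Your emphasis on the non-degenerate form $(1+|a|+|b|)^{p-2}$ rather than $(|a|+|b|)^{p-2}$ is exactly the right point — the weight must stay harmless where $H$ is small — and the quoted segment-integral lemma is standard for $\gamma=p-2>0$.
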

\begin{proof}
We have $|\mathcal{H}|^2=(1+|H|^2)^{p-2}|H|^2$ and we get from the mean value theorem
\begin{align}
(|\mathcal{H}|^2)_h(x)=&((1+|H|^2)^{p-2})_h(x)|H|^2(x+he_\nu)+(1+|H|^2)^{p-2}(x)(|H|^2)_h(x)\nonumber \\
   =&(|H|^2)_h(x)[(p-2)(1+\xi(x))^{p-3}|H|^2(x+he_v)+(1+|H|^2)^{p-2}(x)],\label{H}
\end{align}
where $0\le \xi(x) \leq |H|^2(x)+|H|^2(x+he_\nu)$. Hence we have
$$|(|H|^2)_h|(x)\leq |(|\mathcal{H}|^2)_h|(x).$$
On the other hand we calculate
$$(|\mathcal{H}|^2)_h(x)=
(\mathcal{H})_h(x)\mathcal{H}(x+he_\nu)
+(\mathcal{H})_h(x)\mathcal{H}(x).$$
Combining this with Theorem \ref{1st} proves the first estimate.

Another application of the mean value theorem yields
$$\mathcal{H}_h(x)=H_h(x)(1+|H|^2)^{\frac{p}{2}-1}(x+he_\nu)+(\frac{p}{2}-1)(1+\xi(x))^{\frac{p}{2}-2}(|H|^2)_h(x)H(x),$$
where $0\le \xi(x) \leq |H|^2+|H|^2_h$ and hence the second estimate follows from the first one.
\end{proof}

We have the following Corollary.
\begin{cor}\label{d2udq}
Let $u$ be as in Theorem \ref{regularityW} and let $B_r \subset \Omega$. Then, for every $1<s<\infty$ and for all $h>0$ small enough we have 
\begin{align}
\int_{B_r} \eta^{2s}|(D^2 u)_h|^s \le C\int_{B_r}\Phi(x,h)\Big(\eta^{2s}|(\mathcal{H})_h|^s+1\Big), \label{d2udq1}
\end{align}
where $\eta\in C^\infty_c(B_r)$ is a smooth cut-off function and $\Phi$ satisfies
$$\int_{B_r}|\Phi(x,h)|^q dx\le C(q)\ \ \ \forall \,\ 1\le q<\infty.$$
\end{cor}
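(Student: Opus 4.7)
The plan is to differentiate the mean curvature system \eqref{hsystem} via difference quotients, apply $L^s$ elliptic theory for linear systems with H\"older continuous coefficients to the resulting equation for $u_h$, and then convert the resulting bound in terms of $H_h$ into one in terms of $\mathcal{H}_h$ using Lemma \ref{lem3}.

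First I would rewrite \eqref{hsystem} as $a_{ij}^{\alpha\beta}(Du)\,\partial^2_{\alpha\beta}u^i = H^{j+2}$, where the coefficients $a_{ij}^{\alpha\beta}(p)=g^{\alpha\beta}(p)(\delta_{ij}-g^{\lambda\mu}(p)p_\lambda^i p_\mu^j)$ are smooth in $p$. By Theorem \ref{1st} we have $u\in W^{2,q}_{loc}$ for every $q<\infty$, so $Du$ is H\"older continuous and the coefficient matrix is uniformly elliptic on $B_r$ with ellipticity constant depending only on $\|Du\|_{L^\infty}$. Applying the difference quotient in direction $e_\nu$ using $(fg)_h(x)=f_h(x)g(x+he_\nu)+f(x)g_h(x)$, the function $u_h$ satisfies
\[
a_{ij}^{\alpha\beta}(Du(x))\,\partial^2_{\alpha\beta}u_h^i(x) = H_h^{j+2}(x) - (a_{ij}^{\alpha\beta}(Du))_h(x)\,\partial^2_{\alpha\beta}u^i(x+he_\nu).
\]
Since $a_{ij}^{\alpha\beta}$ is smooth and $Du$ is bounded, the mean value theorem yields $|(a_{ij}^{\alpha\beta}(Du))_h|\le C|(Du)_h|$, and the standard difference-quotient bound $\|(Du)_h\|_{L^q(B_r)}\le\|\partial_\nu Du\|_{L^q(B_{r+h})}$ together with Theorem \ref{1st} gives uniform-in-$h$ bounds for $(Du)_h$ in every $L^q$.

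Next I would apply the $L^s$ Calder\'on--Zygmund (Agmon--Douglis--Nirenberg) estimate for the frozen-coefficient linear elliptic system to $v=\eta^2 u_h$. Expanding $Lv := a_{ij}^{\alpha\beta}(Du)\partial^2_{\alpha\beta}v^i$ produces, besides $\eta^2\,Lu_h$, commutators of order one and zero in $u_h$ controlled by $\|\eta\|_{C^2}$. Using the identity $\eta^2 D^2 u_h = D^2 v - 2D(\eta^2)\,Du_h - D^2(\eta^2)\,u_h$ and collecting terms yields
\[
\int_{B_r}\eta^{2s}|D^2 u_h|^s \le C\int_{B_r}\eta^{2s}|H_h|^s + C\int_{B_r}\eta^{2s}|(Du)_h|^s|D^2u(\cdot+he_\nu)|^s + C\int_{B_r}(|Du_h|^s+|u_h|^s).
\]

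Finally I would absorb the error terms into a single function $\Phi$. Lemma \ref{lem3} provides the pointwise bound $|H_h|\le\Phi_1|\mathcal{H}_h|$, so $\eta^{2s}|H_h|^s \le \Phi_1^s\,\eta^{2s}|\mathcal{H}_h|^s$ with $\Phi_1^s$ uniformly bounded in every $L^q$. The commutator term $|(Du)_h|^s|D^2u(\cdot+he_\nu)|^s$ is a product of two functions each uniformly bounded in every $L^q$, hence lies in every $L^q$ uniformly in $h$; the lower-order contributions $|Du_h|^s$ and $|u_h|^s$ are also uniformly bounded in every $L^q$ by Theorem \ref{1st} together with the standard difference-quotient theory. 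Collecting $\Phi_1^s$, the commutator term, and the lower-order terms into a single function $\Phi(x,h)$ with $\|\Phi(\cdot,h)\|_{L^q(B_r)}\le C(q)$ for all $q<\infty$ delivers the claim. The main technical obstacle is the careful bookkeeping of the $\eta^{2s}$ weight when applying the $L^s$ elliptic estimate; once this is organized, the rest uses only H\"older's inequality, Lemma \ref{lem3}, and Theorem \ref{1st}.
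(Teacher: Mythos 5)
Your proof follows essentially the same route as the paper: take difference quotients of the prescribed mean curvature system \eqref{hsystem}, observe that $u_h$ solves a linear elliptic system with H\"older-continuous coefficients (thanks to Theorem \ref{1st}), apply standard $L^s$ theory to $\eta^2 u_h$, and invoke Lemma \ref{lem3} to pass from $|H_h|$ to $\Phi|\mathcal{H}_h|$, absorbing commutator and lower-order terms into a single $\Phi$. The only cosmetic difference is that you freeze the coefficients at $x$ rather than at $x+he_\nu$ as the paper does; both are equivalent, and your accounting of the error terms is correct.
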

\begin{proof}
Using \eqref{hsystem} we see that $u_h$ solves
\begin{align*}
a_{ij}^{\alpha \beta}(\cdot+he_\nu) \partial^2_{\alpha \beta} u_h^i 
= H_h^{j+2}- \big(a_{ij}^{\alpha \beta} \big)_h\partial^2_{\alpha \beta} u^i=: \tilde{H}^j(x), 
\end{align*}
where $a_{ij}^{\alpha \beta}=g^{\alpha \beta} \big(\delta_{ij} - 
g^{\lambda \mu} \partial_\lambda u^i \partial_\mu u^j\big)$.

Using Theorem \ref{1st} and Lemma \ref{lem3}, a standard estimate shows that there exists a function $\Phi_1(x,h)$ satisfying 
$$\int_{B_r}|\Phi_1(x,h)|^q dx\le C(q),$$
for all $1<q<\infty$ and all $h>0$ small, such that
\begin{align*}
|\tilde{H}|(x)\le& \Phi_1(x,h)\Big(|\mathcal{H}_h|+1\Big).
\end{align*}
Next we use standard $L^p$-theory in order to get for every $1<s<\infty$
\begin{align*}
\int_{B_r}|D^2(\eta^2 u_h)|^s\le& C\int_{B_r}\Big(\eta^{2s}|\tilde{H}|^s+|Du_h|^s+|u_h|^s\Big)\\
\le& C\int_{B_r}\Phi_2(x,h)\Big(|\mathcal{H}_h|^s+1\Big),
\end{align*}
where 
$\Phi_2(x,h)$ satisfies 
$$\int_{B_r}|\Phi_2(x,h)|^q dx\le C(q),$$
for all $h>0$ small and all $1<q<\infty$.

Since moreover
\begin{align*}
\int_{B_r} \eta^{2s}|(D^2 u)_h|^s \le& C\int_{B_r}\Big(|D^2(\eta^2 u_h)|^s+|D(\eta^2u_h)|^s+|\eta^2u_h|^s\Big)\\
\le& C\int_{B_r}|D^2(\eta^2 u_h)|^s+C\int_{B_r} \Phi_3(x,h),
\end{align*}
for some function $\Phi_3$ with the same properties as $\Phi_1$ and $\Phi_2$, this finishes the proof of the Corollary.
\end{proof}

Now we are in a position to prove that $\mathcal{H} \in W^{1,2}_{loc}(\Omega)$.

\begin{pro}\label{2nd}
Let $u\in W^{2,p}(\Omega,\R^{n-2})$ be as in Theorem \ref{regularityW}. Then we have that $\mathcal{H}\in W^{1,2}_{loc}(\Omega,\R^n)$.
\end{pro}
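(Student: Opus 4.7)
My plan is to establish $\sup_h \|\mathcal{H}_h\|_{L^2(B_{r/2})} < \infty$ for every $B_r \subset\subset \Omega$, which by the standard difference quotient characterization will give $\mathcal{H} \in W^{1,2}_{loc}$. Fix such a ball and a cutoff $\eta \in C^\infty_c(B_r)$ with $\eta \equiv 1$ on $B_{r/2}$ and $\|\eta\|_\infty \le 1$, and write $g_\sharp := g(\cdot+he_\nu)$, similarly for the other quantities. For $|h|$ sufficiently small, translation invariance of the integrand shows that $u(\cdot+he_\nu)$ is also a weak solution of \eqref{e} on $B_r$; subtracting the two equations and dividing by $h$ yields
$$
\int_{B_r} \langle \mathcal{H}_h, L_{g_\sharp}\varphi\rangle = -\int_{B_r} \langle \mathcal{H}, c^{\alpha\beta}_h\, \partial^2_{\alpha\beta}\varphi\rangle - \int_{B_r} (B^\alpha_i)_h\, \partial_\alpha\varphi^i
$$
for every admissible $\varphi$, where $c^{\alpha\beta} := \sqrt{\det g}\,g^{\alpha\beta}$. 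Since $L_{g_\sharp}$ is a uniformly elliptic non-divergence operator with $C^{1-2/p}$ coefficients, standard $L^2$-theory produces $\varphi \in W^{2,2}_0(B_r,\R^n)$ solving $L_{g_\sharp}\varphi = \eta^2\mathcal{H}_h$ (the right hand side lies in $L^2$ for each fixed $h$), satisfying $\|\varphi\|_{W^{2,2}(B_r)} \le C\,I^{1/2}$ with $I := \int_{B_r}\eta^2|\mathcal{H}_h|^2$; two-dimensional Sobolev embedding then gives $\|D\varphi\|_{L^q(B_r)} \le C(q)\, I^{1/2}$ for every finite $q$.

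With this choice of $\varphi$ the left hand side of the identity equals $I$. The middle term is bounded pointwise: since $c^{\alpha\beta}$ is smooth in $Du$ the mean value theorem yields $|c^{\alpha\beta}_h| \le C|(Du)_h|$, and Theorem \ref{1st} together with $\|(Du)_h\|_{L^4(B_r)} \le \|D^2 u\|_{L^4(B_{r+|h|})}$ gives
$$
\Bigl|\int_{B_r}\langle \mathcal{H}, c^{\alpha\beta}_h\,\partial^2_{\alpha\beta}\varphi\rangle\Bigr| \le C\,\|\mathcal{H}\|_{L^4}\|(Du)_h\|_{L^4}\|D^2\varphi\|_{L^2} \le C\,I^{1/2}.
$$

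The delicate estimate is on the $(B_i^\alpha)_h$ term. A direct pointwise bound $|B_h| \le C(1+|D^2u|^{p-1})|(D^2u)_h|+C(1+|D^2u|^p)|(Du)_h|$ combined with Corollary \ref{d2udq} only yields $\|(D^2u)_h\|_{L^s}\|D\varphi\|_{L^{s'}} \sim I^{1/2}\cdot I^{1/2} = I$, whose constant is not absorbable. I avoid this through a discrete integration by parts that shifts the difference quotient from $B$ onto $D\varphi$:
$$
\int_{B_r}(B^\alpha_i)_h\,\partial_\alpha\varphi^i = -\int_{B_r} B^\alpha_i\,(\partial_\alpha\varphi^i)^{-h},
$$
where $(\partial_\alpha\varphi^i)^{-h}(x) := [\partial_\alpha\varphi^i(x)-\partial_\alpha\varphi^i(x-he_\nu)]/h$ is the backward difference quotient. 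Extending $\varphi \in W^{2,2}_0(B_r)$ by zero to $\R^2$ one has $\|(\partial_\alpha\varphi^i)^{-h}\|_{L^2} \le \|D^2\varphi\|_{L^2} \le C\,I^{1/2}$, while $|B| \le C(1+|D^2u|^p)$ together with Theorem \ref{1st} gives $\|B\|_{L^2(B_r)} \le C$, so this term is also bounded by $C\,I^{1/2}$. Combining the three estimates produces $I \le C\,I^{1/2}$ with $C$ independent of $h$, whence $I$ is uniformly bounded and, $B_r$ being arbitrary, $\mathcal{H} \in W^{1,2}_{loc}(\Omega,\R^n)$. The main obstacle is precisely this $B_h$ term: the naive estimate is linear in $I$, and it is only the discrete integration by parts that replaces control of the singular quantity $(D^2u)_h$ by the much better $L^2$ control of $D^2\varphi$ coming from the auxiliary equation.
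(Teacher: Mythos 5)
The discrete integration by parts $\int_{B_r}(B^\alpha_i)_h\,\partial_\alpha\varphi^i = -\int_{B_r} B^\alpha_i\,(\partial_\alpha\varphi^i)^{-h}$ is the pivot of your argument, but it is exactly here that the proof breaks. Your $\varphi$ is produced by $L^2$-theory as an element of $W^{2,2}\cap W^{1,2}_0(B_r)$: only the Dirichlet trace vanishes, not $D\varphi$. Extending such a $\varphi$ by zero does \emph{not} give a $W^{2,2}(\R^2)$ function --- the normal derivative jumps across $\partial B_r$ --- and consequently $\|(\partial_\alpha\varphi^i)^{-h}\|_{L^2(\R^2)}$ is not bounded by $\|D^2\varphi\|_{L^2}$; it blows up like $h^{-1/2}$ (an annular layer of width $\sim h$ where $(\partial_\alpha\varphi)^{-h}\sim h^{-1}$). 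You cannot escape this by asking instead for $\varphi$ with both $\varphi$ and $D\varphi$ vanishing on $\partial B_r$: that double Dirichlet condition overdetermines the second-order problem $L_{g_\sharp}\varphi = \eta^2\mathcal{H}_h$. For the same reason, the test function $\varphi$ is not admissible in the weak form \eqref{e} in the first place (the form requires $W^{2,p}_0$ test functions, i.e.\ vanishing to second order), so the difference-quotient identity you write down carries boundary contributions that you have not accounted for. In short, the step ``$B$-term $\lesssim \|B\|_{L^2}\|(D\varphi)^{-h}\|_{L^2}\lesssim I^{1/2}$'' does not hold.

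The paper sidesteps all of this by making a structurally different choice: solve $\tilde{L}\tilde\varphi = \mathcal{H}$ (right-hand side independent of $h$), obtain $\tilde\varphi\in W^{2,4}$ with a bound independent of $h$ and of $I$, then set $\varphi_h=\eta^4\tilde\varphi_h$. Because $\|\tilde\varphi_h\|_{W^{1,4}}\le\|\tilde\varphi\|_{W^{2,4}}=O(1)$ uniformly, the ``naive'' bound that you dismiss actually closes in the paper's setting: the delicate term $II$ there comes out of order $I^{1/2}$ (via a $L^{3/2}$--$L^4$ split and Corollary \ref{d2udq}), not order $I$. Your claim that the direct estimate ``only yields $I$'' is thus an artifact of taking the $h$-dependent right-hand side $\eta^2\mathcal{H}_h$ in the auxiliary equation, which forces $\|\varphi\|_{W^{2,2}}\sim I^{1/2}$; with the paper's $h$-independent auxiliary function there is no such coupling and the discrete integration by parts is unnecessary.
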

\begin{proof}
Taking difference quotients of equation \eqref{e} we get
\begin{align}
\partial^2_{\alpha \beta}\big(\sqrt{g} g^{\alpha \beta}\mathcal{H}\big)_h-\partial_\alpha\big(B^\alpha(Du,D^2u)\big)_h=0.\label{eqdiffsystemH}
\end{align}
We abbreviate $U(x) = \big(Du(x),D^2 u(x)\big)$ and we use the fundamental theorem of Calculus to write 
\begin{eqnarray*} 
\left(f \circ U\right)_h (x) & = & \frac{1}{h}\,\Big(f(U(x+he_\nu)) - f(U(x))\Big)\\
& = & \frac{1}{h} \int_0^1 \frac{d}{dt}\, f\big((1-t)U(x) + t\, U(x+h e_\nu)\big) \,dt\\
& = & \int_0^1 Df\big((1-t)U(x) + t\, U(x+he_\nu)\big)\,dt \cdot U_h(x).
\end{eqnarray*}
Using the notation $f^h(x) = \int_0^1 f\big((1-t)U(x) + t U(x+he_\nu)\big)\,dt$ 
we thus get 
$$
\left(f \circ U\right)_h = 
\Big(\frac{\partial f}{\partial q_{\lambda \mu}^j}\Big)^h  \partial^2_{\lambda \mu}  u_h^j
+ \Big(\frac{\partial f}{\partial p_{\lambda}^j}\Big)^h \partial_{\lambda} u_h^j,
$$
and the system (\ref{eqdiffsystemH}) takes the form
\begin{equation}
\label{eqdiffsystemH2}
\partial^2_{\alpha \beta} \Big((\sqrt{g} g^{\alpha \beta})(x+he_\nu)\mathcal{H}_h\Big) +\partial^2_{\alpha \beta} \Big((\sqrt{g} g^{\alpha \beta})_h\mathcal{H}\Big)-
\partial_{\alpha} \left(\tilde{B}^\alpha(Du_h,D^2 u_h)\right)= 0.
\end{equation}
Here the coefficients are given as follows:
\begin{align*} 
\tilde{B}^\alpha(x,z,p,q) =&
\Big(\frac{\partial B^\alpha}{\partial q_{\lambda \mu}^j}\Big)^h(x)  q^j_{\lambda \mu} 
+ \Big(\frac{\partial B^\alpha}{\partial p_{\lambda}^j}\Big)^h(x) p^j_\lambda .
\end{align*}
In order to state bounds for these coefficients, we introduce the abbreviation 
$$
I_{s,h}(x) = 
\int_0^1 \Big(1+ \big|(1-t)D^2 u(x) + t D^2 u(x+h)\big|^2\Big)^{s/2}\,\,dt.
$$
Using \eqref{EL2} we then obtain 
\begin{align}
|\tilde{B}(p,q)| \leq  C \Big(I_{p-1,h}(x) |q| + I_{p,h}(x) |p| \Big).\label{estB}
\end{align}
For $||Du||_{L^\infty}<\infty$ we get 
\begin{align}
|(\sqrt{g} g^{-1})_h\mathcal{H}|(x)\le C (1+|D^2 u|^2)^{\frac{p-1}{2}}(x)|Du_h|(x) \label{estR}
\end{align}
and moreover the operator
\[
\tilde{L}v(x):=(\sqrt{g} g^{\alpha \beta})(x+he_\nu)\partial_{\alpha \beta}^2 v(x)
\]
is strongly elliptic. We let $B_r \subset \Omega$ and by standard $L^p$-theory there exists a solution $\tilde{\varphi}\in W^{2,4}\cap W^{1,4}_0(B_r,\R^n)$ of
\[
\tilde{L} \tilde{\varphi} = \mathcal{H}
\]
satisfying
\[
||\tilde{\varphi}||_{W^{2,4}(B_r)}\le C||\mathcal{H}||_{L^4(B_r)}.
\]
Next we let $\eta\in C^\infty_c(B_r)$ be a smooth cut-off function and we define $\varphi_h=\eta^4 \tilde{\varphi}_h$. A standard computation then shows that
\begin{align*}
 \tilde{L} \varphi_h =& \eta^4 \mathcal{H}_h -(\sqrt{g}g^{\alpha \beta})_h(x+he_\nu)\partial^2_{\alpha \beta}(\eta^4\tilde{\varphi})(x+he_\nu)\\
&+4\eta^2\Big(\eta (\sqrt{g} g^{\alpha \beta})(x+he_\nu)\partial_\alpha \eta \partial_\beta \tilde{\varphi}_h\\
&+\eta (\sqrt{g} g^{\alpha \beta})(x+he_\nu)\partial^2_{\alpha \beta} \eta  \tilde{\varphi}_h+3 (\sqrt{g} g^{\alpha \beta})(x+he_\nu)\partial_{\alpha} \eta \partial_\beta \eta \tilde{\varphi}_h\Big)\\
=& \eta^4 \mathcal{H}_h+\eta^2 R[\tilde{\varphi},D\tilde{\varphi},D^2 \tilde{\varphi}]
\end{align*}
and, by using standard $L^2$-estimates, we conclude
\begin{align}
||\varphi_h||_{W^{2,2}(B_r)}\le& C(r)(||\eta^2 \mathcal{H}_h||_{L^2(B_r)}+||\eta^2 \tilde{\varphi}_h||_{W^{1,2}(B_r)}+||R[\tilde{\varphi},D\tilde{\varphi},D^2 \tilde{\varphi}]||_{L^2(B_r)})\nonumber\\
\le& C(r)\Big(||\eta^2 \mathcal{H}_h||_{L^2(B_r)}+(\int_{B_r}\Phi(x,h)dx)^{\frac12}\Big),\label{test}
\end{align}
where here and in the following we let $\Phi(x,h)$ be a function satisfying
\[
\int_{B_r} |\Phi(x,h)|^qdx\le C(q)\ \ \ \forall \ \ 1\le q<\infty.
\]

Sobolev's embedding theorem then gives for every $q<\infty$
\[
||\varphi||_{W^{1,q}(B_r)}\le C(r)\Big(||\eta^2 \mathcal{H}_h||_{L^2(B_r)}+(\int_{B_r}\Phi(x,h)dx)^{\frac12}\Big).
\]
Using $\varphi_h$ as a test function in \eqref{eqdiffsystemH2} we conclude
\begin{align*}
\int_{B_r}\eta^4 |\mathcal{H}_h|^2\le& C\int_{B_r}\Big(|(\sqrt{g} g^{-1})_h\mathcal{H}||D^2 \varphi_h|+ |\tilde{B}(Du_h,D^2u_h)||D\varphi_h|\\
&+\eta^2|\mathcal{H}_h||R[\tilde{\varphi},D\tilde{\varphi},D^2 \tilde{\varphi}]|\Big)\\
=& I+II+III.
\end{align*}
Next we estimate all three integrals separately. We start with $I$. Combining \eqref{estR}, \eqref{test}, Theorem \ref{1st} and H\"older's inequality we conclude
\begin{align*}
I\le& C||D^2 \varphi_h||_{L^2(B_r)}\big(\int_{B_r}(1+|D^2 u|^2)^{p-1}|Du_h|^2\big)^{\frac12}\\
\le& C(\int_{B_r}\Phi(x,h)dx)^{\frac12}\Big(||\eta^2 \mathcal{H}_h||_{L^2(B_r)}+(\int_{B_r}\Phi(x,h)dx)^{\frac12}\Big)\\
\le&  \delta \int_{B_r}\eta^4 |\mathcal{H}_h|^2+C\int_{B_r}\Phi(x,h)dx.
\end{align*}
Using Corollary \ref{d2udq}, \eqref{estB} and H\"older's inequality we get
\begin{align*}
II\le& C||\tilde{\varphi}_h||_{W^{1,4}(B_r)}(\int_{B_r}I_{p-1,h}^{\frac43}\eta^{\frac83}|D^2u_h|^{\frac43}+I_{p,h}^{\frac43}|Du_h|^{\frac43})^{\frac34}\\
\le& C||\tilde{\varphi}_h||_{W^{1,4}(B_r)}\Big((\int_{B_r}\eta^3 |D^2u_h|^{\frac32})^{\frac23}(\int_{B_r}I_{p-1,h}^{12})^{\frac{1}{12}}+\int_{B_r}\Phi(x,h))^{\frac34}\Big)\\
\le& C||\tilde{\varphi}_h||_{W^{1,4}(B_r)}\Big((\int_{B_r}\eta^3 \Phi(x,h)|\mathcal{H}_h|^{\frac32}+\Phi(x,h))^{\frac23}\\
&\cdot (\int_{B_r}\Phi(x,h))^{\frac{1}{12}}+(\int_{B_r}\Phi(x,h))^{\frac34}\Big).
\end{align*}
Using the fact that $\tilde{\varphi} \in W^{2,4}(B_r)$ we get for $h>0$ small enough
\begin{align*}
II\le \delta \int_{B_r}\eta^4 |\mathcal{H}_h|^2+\int_{B_r}\Phi(x,h).
\end{align*}
Finally we get
\begin{align*}
III\le& \delta \int_{B_r}\eta^4 |\mathcal{H}_h|^2+\int_{B_r}|R[\tilde{\varphi},D\tilde{\varphi},D^2 \tilde{\varphi}]|^2\\
\le& \delta \int_{B_r}\eta^4 |\mathcal{H}_h|^2+\int_{B_r}\Phi(x,h).
\end{align*}
Combining all these estimates yields for $\delta$ small enough
\begin{align*}
\int_{B_r}\eta^4 |\mathcal{H}_h|^2\le \int_{B_r}\Phi(x,h)
\end{align*}
and therefore we can let $h\rightarrow 0$ in order to conclude that $\mathcal{H} \in W^{1,2}_{loc}(\Omega,\R^{n})$ with
\begin{align*}
\int_K\eta^4 |D\mathcal{H}|^2\le C
\end{align*}
for all $K\subset \subset \Omega$.
\end{proof}

\begin{cor}\label{3s}
Let $u\in W^{2,p}(\Omega,\R^{n-2})$ be as in Theorem \ref{regularityW}. Then we have that $H\in W^{1,s}_{loc}(\Omega,\R^{n})$ and $u\in W^{3,s}_{loc}(\Omega,\R^{n-2})$ for all $s<2$.
\end{cor}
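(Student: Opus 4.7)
\textbf{Proof proposal for Corollary \ref{3s}.}

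The key observation is that Proposition \ref{2nd} provides $\mathcal{H} \in W^{1,2}_{loc}(\Omega,\R^n)$, equivalently that the finite differences $\mathcal{H}_h$ are uniformly bounded in $L^2_{loc}$ as $h \to 0$. Both assertions of the corollary are obtained by transferring this $L^2$ bound on $\mathcal{H}_h$ to $L^s$ bounds on $H_h$ and $(D^2 u)_h$ respectively, via the pointwise comparisons already derived in Lemma \ref{lem3} and Corollary \ref{d2udq}. In both inequalities a weight $\Phi(\cdot,h)$ appears whose $L^q$ norm on every relatively compact subdomain is bounded uniformly in $h$ for every $q<\infty$, so H\"older's inequality will absorb it at the expense of losing an arbitrarily small amount of integrability relative to $\mathcal{H}_h$; this is precisely the source of the restriction $s < 2$.

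For the $W^{1,s}_{loc}$ bound on $H$, I would apply Lemma \ref{lem3} to obtain $|H_h|(x) \le \Phi(x,h)|\mathcal{H}_h|(x)$, and then for any $s \in (1,2)$ and $B_r \subset\!\subset \Omega$ use H\"older with conjugate exponents $2/s$ and $2/(2-s)$ to get
\[
\int_{B_r} |H_h|^s \, dx \le \Big(\int_{B_r} |\mathcal{H}_h|^2\,dx\Big)^{s/2} \Big(\int_{B_r} \Phi(x,h)^{\frac{2s}{2-s}}\,dx\Big)^{\frac{2-s}{2}}.
\]
Since $\frac{2s}{2-s} < \infty$, the second factor is bounded uniformly in $h$ by the hypothesis on $\Phi$ recorded in Lemma \ref{lem3}, while the first factor is uniformly bounded by Proposition \ref{2nd}. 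Letting $h \to 0$ along a subsequence yields $H \in W^{1,s}_{loc}(\Omega,\R^n)$ together with a bound of the form $|DH| \le \Phi\,|D\mathcal{H}|$ in the limit.

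The argument for $u \in W^{3,s}_{loc}$ proceeds in exactly the same spirit, starting from Corollary \ref{d2udq}, which provides
\[
\int_{B_r} \eta^{2s} |(D^2 u)_h|^s \, dx \le C\int_{B_r} \Phi(x,h)\big(\eta^{2s}|\mathcal{H}_h|^s + 1\big)\, dx.
\]
The term containing the $+1$ is uniformly bounded because $\Phi \in L^1$, and the remaining term is controlled by the same H\"older split as above, giving a bound uniform in $h$. Letting $h \to 0$ gives that $(D^2 u)_h$ is uniformly bounded in $L^s_{loc}$ for every $s<2$, hence $u \in W^{3,s}_{loc}(\Omega,\R^{n-2})$. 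I do not anticipate any genuine obstacle here: the passage $h \to 0$ in difference quotients is routine, and the threshold $s < 2$ is forced by the weighted estimates in the previous subsection and cannot be improved by this strategy.
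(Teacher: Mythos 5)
Your proof is correct but takes a somewhat different route. For the bound on $H$, the paper works at the level of derivatives: it first deduces $|\mathcal{H}|^2 \in W^{1,s}_{loc}$ from Theorem \ref{1st} and Proposition \ref{2nd}, then uses the explicit chain-rule identities \eqref{1} and \eqref{2} to obtain the pointwise inequality $|DH| \le C(|D\mathcal{H}| + |D|\mathcal{H}|^2|)$, from which $H\in W^{1,s}_{loc}$ follows. You instead use the finite-difference comparison already proved in Lemma \ref{lem3} and a H\"older split, converting the uniform $L^2$ bound on $\mathcal{H}_h$ into a uniform $L^s$ bound on $H_h$. For $u \in W^{3,s}_{loc}$, the paper appeals once more to $L^s$-theory for the mean curvature system \eqref{hsystem} with $H\in W^{1,s}_{loc}$, whereas you invoke Corollary \ref{d2udq}, which records that same elliptic estimate directly at the difference-quotient level. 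Both routes are valid, give exactly the same range $s<2$, and locate the integrability loss in the same place (pairing $L^2$ data against a weight that is only in $L^q$ for finite $q$); yours has the merit of recycling the already-established Lemma \ref{lem3} and Corollary \ref{d2udq}, while the paper's explicit derivative identities make the source of the loss slightly more transparent. One small caveat: your parenthetical claim that a pointwise bound $|DH|\le\Phi|D\mathcal{H}|$ survives ``in the limit'' is not quite justified as stated, since $\Phi$ itself depends on $h$ and the difference quotients converge only weakly; but, as you note, this is not needed --- the uniform $L^s$ bounds on the difference quotients already imply membership in $W^{1,s}_{loc}$.
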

\begin{proof}
Combining Theorem \ref{1st} and Proposition \ref{2nd} we get that
\[
|\mathcal{H}|^2 \in W^{1,s}_{loc}(\Omega)
\]
for all $s<2$. Using the formulas
\begin{align}
D |\mathcal{H}|^2=D |H|^2\Big((1+|H|^2)^{p-2}+(p-2)|H|^2(1+|H|^2)^{p-3}\Big)\label{1}
\end{align}
and
\begin{align}
(1+|H|^2)^{\frac{p-2}{2}}D H=D \mathcal{H}-\frac{p-2}{2}(1+|H|^2)^{\frac{p-4}{2}}H D |H|^2, \label{2}
\end{align}
we conclude that
\begin{align*}
|DH|\le& C(|D\mathcal{H}|+|D|\mathcal{H}|^2|).
\end{align*}
Therefore we get that
\[
H \in W^{1,s}_{loc}(\Omega,\R^{n})
\]
for all $s<2$. Arguing as at the beginning of this subsection we conclude that
\[
 u\in W^{3,s}_{loc}(\Omega,\R^{n-2})
\]
for all $s<2$.
\end{proof}

\subsubsection{Higher regularity}

In this last subsection we show the higher regularity for solutions of \eqref{e}. We start by showing that $\mathcal{H} \in W^{1,2+\gamma}(\Omega,\R^{n})$ for some $0<\gamma<\frac12$. 

In order to see this, we let $B_r\subset \Omega$ and we let $\varphi_1\in W^{2,\frac{2}{1+\gamma}}\cap W^{1,\frac{2}{1+\gamma}}_0(B_r,\R^{n})$ be the solution of
\[
\tilde{L}\varphi_1=|\mathcal{H}_h|^\gamma \mathcal{H}_h
\]
satisfying
\begin{align*}
||\varphi_1||_{W^{2,\frac{2}{1+\gamma}}(B_r)}\le C||\mathcal{H}_h||_{L^2(B_r)}^{1+\gamma}.
\end{align*}
Sobolev's embedding theorem yields
\begin{align*}
||\varphi_1||_{L^\infty(B_r)}+||D\varphi_1||_{L^{\frac{2}{\gamma}}(B_r)}\le  C||\mathcal{H}_h||_{L^2(B_r)}^{1+\gamma}.
\end{align*}

Next we let $\eta\in C^\infty_c(B_r)$ and we define $\tilde{\varphi_1}=\eta^4 \varphi_1$. We conclude that
\begin{align*}
\tilde{L}\tilde{\varphi_1}=&\eta^4 |\mathcal{H}_h|^\gamma \mathcal{H}_h+8\eta^3 (\sqrt{g}g^{\alpha \beta})(x+he_\nu)\partial_\alpha \eta \partial_\beta \varphi_1\\
&+4\eta^2 \varphi_1 (\sqrt{g}g^{\alpha \beta})(x+he_\nu)(3 \partial_\alpha \eta \partial_\beta \eta+ \eta \partial^2_{\alpha \beta} \eta)\\
=:& \eta^4 |\mathcal{H}_h|^\gamma \mathcal{H}_h+\eta^2L[\varphi_1,D\varphi_1].
\end{align*}
Moreover we have that
\begin{align*}
||\tilde{\varphi_1}||_{L^\infty(B_r)}+||D\tilde{\varphi_1}||_{L^{\frac{2}{\gamma}}(B_r)}+||D^2\tilde{\varphi}_1||_{L^{\frac{2}{1+\gamma}}(B_r)}\le C||\mathcal{H}_h||_{L^2(B_r)}^{1+\gamma}.
\end{align*}

Now we use $\tilde{\varphi}_1$ as a test function in equation \eqref{eqdiffsystemH2} and we conclude
\begin{align*}
\int_{B_r}\eta^4 |\mathcal{H}_h|^{2+\gamma}\le& C\int_{B_r}\Big(|(\sqrt{g} g^{-1})_h\mathcal{H}||D^2 \tilde{\varphi}_1|+ |\tilde{B}(Du_h,D^2u_h)||D\tilde{\varphi}_1|\\
&+\eta^2|\mathcal{H}_h||L[\varphi_1,D\varphi_1]|\Big)\\
=& i)+ii)+iii).
\end{align*}
As before we estimate the three integrals separately. Using \eqref{estR} and H\"older's inequality we get for $h$ small enough
\begin{align*}
i)\le& C||D^2\tilde{\varphi}_1||_{L^{\frac{2}{1+\gamma}}(B_r)}||(1+|D^2 u|^2)^{\frac{p-1}{2}}|Du_h|||_{L^{\frac{2}{1-\gamma}}(B_r)}\\
\le& C.
\end{align*}
Corollary \ref{3s} and \eqref{estB} imply for $h$ small enough
\begin{align*}
ii)\le& C||D\tilde{\varphi_1}||_{L^{\frac{2}{\gamma}}(B_r)}\Big(||D^2u_h||_{L^{\frac{4}{3}}(B_r)}||I_{p-1,h}||_{L^{\frac{4}{1-2\gamma}}(B_r)}\\
&+||Du_h||_{L^{\frac{4}{3}}(B_r)}||I_{p,h}||_{L^{\frac{4}{1-2\gamma}}(B_r)}\Big)\\
\le& C.
\end{align*}
Finally we have
\begin{align*}
iii)\le& C||\mathcal{H}_h||_{L^2(B_r)}||\varphi_1||_{W^{1,2}(B_r)}\le C.
\end{align*}
Combining thes estimates we conclude
\begin{align*}
\int_{B_{\frac{r}{2}}} |\mathcal{H}_h|^{2+\gamma}\le C
\end{align*}
and therefore we get that
\[
\mathcal{H} \in W^{1,2+\gamma}_{loc}(\Omega,\R^{n}).
\]
In particular this implies that $\mathcal{H}\in L^\infty_{loc}(\Omega,\R^{n-2})$ and hence
\[
|\mathcal{H}|^2 \in W^{1,2+\gamma}_{loc}(\Omega).
\]
Now we can argue as in the proof of Corollary \ref{3s} in order to get
\begin{align*}
u\in W^{3,2+\gamma}_{loc}(\Omega,\R^{n-2}).
\end{align*}
By the Sobolev embedding theorem this gives
\[
u\in C^{2,\beta}_{loc}(\Omega,\R^{n-2}).
\] 
for some $\beta>0$. The smoothness of solutions of equation \eqref{e} now follows from classical Schauder theory.

\subsection{The functional $\mathcal{E}^p(f)$}

Here we consider for $p > 2$ weak solutions $u\in W^{2,p}(\Omega, \R^m)$ of elliptic 
systems in two independent variables of the form 
\begin{equation}
\label{eqsystem}
\partial^2_{\alpha \beta} \left[a^i_{\alpha \beta}(\cdot,u,Du,D^2u)\right] + 
\partial_{\alpha} \left[b^i_{\alpha}(\cdot,u,Du,D^2u)\right]+
c^i(\cdot,u,Du,D^2u) = 0.
\end{equation}
We assume that $a,b,c$ are $C^1$ functions satisfying the following ellipticity and 
growth conditions at all points $(x,z,p,q)$, for $V(x,z,p,q) = (1+|q|^2)^{1/2}$
and for constants $\lambda > 0$, $C < \infty$: 
\begin{equation}
\label{eqellipticity}
\frac{\partial a^i_{\alpha \beta}}{\partial q^j_{\lambda \mu}} 
\xi^i_{\alpha \beta} \xi^j_{\lambda \mu} 
\geq \lambda \,V^{p-2} |\xi|^2,
\end{equation}
$$
|a_q| \leq C\,V^{p-2}\nonumber
$$
\begin{equation} 
\label{eqgrowth}
|a|+|a_x|+|a_z|+|a_p|+|b_q|+|c_q| \leq C\,V^{p-1}
\end{equation}
$$
|b|+|b_x| + |b_z| + |b_p| + |c| + |c_x| + |c_z| + |c_p| \leq C\,V^p.
$$
As noted in section \ref{euler}, the graph function of a critical point 
for $\mathcal{E}^p$ satisfies a system of the required form, with 
suitable bounds (\ref{eqellipticity}) and (\ref{eqgrowth}). Therefore
Theorem \ref{regularity} is a consequence of the following Proposition
\ref{generalsys} and standard higher regularity theory, for which we refer
to \cite{morrey66}.

\begin{pro}\label{generalsys}
Let $u\in W^{2,p}(\Omega,\R^m)$ be a weak solution of \eqref{eqsystem}, where 
$p>2$ and $\Omega\subset \R^2$, and assume that \eqref{eqellipticity} and 
\eqref{eqgrowth} hold. Then $u$ belongs to $C^{2,\alpha}_{loc}(\Omega,\R^m)$
for some $\alpha > 0$. 
%NO ESTIMATE?
\end{pro}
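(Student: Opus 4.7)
The strategy, following Morrey \cite{morrey66} and its adaptation by L. Simon \cite{simon93} to the degenerate setting, is to (i) establish a Caccioppoli-type inequality for $V = (1+|D^2 u|^2)^{1/2}$ and upgrade it via hole-filling to power decay of $\int_{B_r} V^p$, (ii) use difference quotients to obtain $L^2$ bounds for weighted third derivatives, and (iii) combine these two pieces of information via a Gehring-type lemma and the Moser--Trudinger inequality to conclude $V \in L^q_{loc}$ for all $q < \infty$, after which $C^{2,\alpha}$ regularity follows by classical linear theory.

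For step (i), I would test the weak formulation of \eqref{eqsystem} against $\eta^4 (u - P)$, where $\eta$ is a standard cutoff supported in a ball $B_r$ and $P$ is the affine Taylor polynomial of $u$ at the centre, subtracted so that Poincar\'e's inequality can absorb the undifferentiated terms. Distributing the derivatives that fall on $\eta$ and using the ellipticity \eqref{eqellipticity} on the principal part together with the growth bounds \eqref{eqgrowth} and Young's inequality, one obtains a Caccioppoli-type estimate
$$
\int_{B_{r/2}} V^p \leq C \int_{B_r \setminus B_{r/2}} V^p + C r^\sigma
$$
for some $\sigma > 0$. Hole-filling (adding $C \int_{B_{r/2}} V^p$ to both sides) and iterating the resulting geometric inequality produces the power decay $\int_{B_r} V^p \leq C r^{2\alpha_0}$ for some $\alpha_0 > 0$, placing $V^p$ in a Morrey space.

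For step (ii), I would difference \eqref{eqsystem} in a coordinate direction $e_\nu$ and use the fundamental theorem of calculus, in the spirit of the proof of Proposition \ref{2nd}, to rewrite it as a linear fourth-order system for $u_h$ whose coefficients are integral averages of $D_q a$, $D_p a$, etc.\ along the segment between $D^2 u(x)$ and $D^2 u(x+h e_\nu)$. Testing against $\eta^4 u_h$, the ellipticity gives a positive contribution bounded below by $\int \eta^4 V_h^{p-2} |D^2 u_h|^2$, where $V_h^{p-2}$ is the corresponding interpolated weight, while the growth bounds \eqref{eqgrowth} control the cross and lower-order terms uniformly in $h$. Sending $h \to 0$ yields $V^{p-2} |D(D^2 u)|^2 \in L^1_{loc}$, equivalently $V^{p/2} \in W^{1,2}_{loc}(\Omega)$.

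The critical step (iii) couples these inputs in dimension two. The Bildhauer--Fuchs--Zhong variant \cite{bildhauer05} of Gehring's lemma, applied in conjunction with the Caccioppoli inequality from step (i) and the $W^{1,2}$ control from step (ii), produces a reverse H\"older inequality for $V^{p/2}$; the Morrey decay guarantees that the smallness assumption on averages needed to run the lemma is met. The two-dimensional Moser--Trudinger inequality then promotes $V^{p/2} \in W^{1,2}_{loc}$ to exponential integrability of $V^p$, so that $V \in L^q_{loc}$ for every $q < \infty$. Once this holds, the principal coefficients of \eqref{eqsystem} are bounded and H\"older continuous, and standard $L^q$ and Schauder theory for linear fourth-order systems (see \cite{morrey66}) yields $u \in C^{2,\alpha}_{loc}$. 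The genuinely delicate point I anticipate is precisely this last coupling: ordinary Gehring produces only a small gain beyond $L^p$, so bridging the gap all the way to $L^\infty$-control of $V$ rests essentially on the Bildhauer--Fuchs--Zhong refinement adapted to the degenerate weight $V^{p-2}$ together with the specifically two-dimensional exponential Orlicz embedding of $W^{1,2}$.
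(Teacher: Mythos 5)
Your steps (i) and (ii) match the paper's argument closely: the Caccioppoli/hole-filling estimate tested against $\eta^4(u-l_r)$ giving Morrey decay of $\int_{B_r} V^p$ (Lemma \ref{morrey}), and the difference-quotient argument giving $V^{p-2}|D^3 u|^2 \in L^1_{loc}$, i.e.\ $V^{p/2}\in W^{1,2}_{loc}$ (subsection \ref{diffquo}). Step (iii), however, has a genuine gap. You claim that once $V\in L^q_{loc}$ for every $q<\infty$, ``the principal coefficients of \eqref{eqsystem} are bounded and H\"older continuous,'' so that linear $L^q$ and Schauder theory apply. This does not follow: the principal coefficient involves $V^{p-2}$, and $V\in\bigcap_q L^q_{loc}$ implies neither $V\in L^\infty_{loc}$ nor any modulus of continuity for $D^2 u$. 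Note moreover that exponential integrability of $V^p$, hence $V\in\bigcap_q L^q_{loc}$, already follows from step (ii) and Moser--Trudinger without any Gehring lemma, so if your deduction were valid the Gehring step would be superfluous.

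What the paper actually does is different, and subtler. The reverse-H\"older inequality \eqref{bfza} fed into the Bildhauer--Fuchs--Zhong Lemma \ref{bfz} concerns the weighted third-derivative quantity $f=(V^{(p-2)/2}|D^3 u|)^{4/3}$ with $d=3/2$, not $V^{p/2}$. The exponential integrability $e^{\beta V^{p-2}}\in L^1_{loc}$ coming from Moser--Trudinger is used precisely to verify the hypothesis $e^{\beta g^d}\in L^1_{loc}$ of that lemma (here $g=V^{2(p-2)/3}$). Its output is a logarithmic gain on the Orlicz scale for $D^3 u$, namely $|D^3 u|^2\log^\alpha(e+|D^3 u|)\in L^1_{loc}$ for every $\alpha>0$, which via the two-dimensional Orlicz--Sobolev embedding (\cite{edmunds97}, \cite{kauhanen99}) is exactly enough to conclude $u\in C^2_{loc}$. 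Only once $D^2 u$ is continuous can one rerun the hole-filling argument on $\int_{B_r}|D^3 u|^2$ and obtain H\"older continuity of $D^2 u$. In short, the decisive mechanism you are missing is the borderline failure of $W^{1,2}(\R^2)\hookrightarrow C^0$, repaired on the level of $D^3 u$ through the logarithmic refinement; improved integrability of $V$ alone cannot close the argument.
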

\begin{rem}
A related regularity result, for functionals where the integrand satisfies 
a more general (anisotropic) ellipticity condition but depends only on the 
second derivatives, was proved in \cite{bildhauer06}. A crucial ingredient 
both in \cite{bildhauer06} and in our paper is the Gehring type lemma from
Bildhauer, Fuchs and Zhong \cite{bildhauer05}.
\end{rem}

\subsubsection{Growth estimate}
In a first step we show a growth estimate for the $L^p$-norm of the second derivatives of weak solutions of the system \eqref{eqsystem}.
\begin{lem}\label{morrey}
Let $p>2$. There exist $r_0>0$, $\beta>0$ and $C>0$ such that if $u\in W^{2,p}(\Omega, \R^m)$ is a weak solution 
of the elliptic system \eqref{eqsystem} which satisfies \eqref{eqellipticity} and \eqref{eqgrowth}, then we have 
for every $B_{2r}(x) \subset \Omega$ with $r < r_0$ that
\begin{align}
\int_{B_r(x)} V^p \le c \Big(\frac{r}{r_0}\Big)^\beta. \label{morrey1}
\end{align}
\end{lem}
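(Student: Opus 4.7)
\textbf{Proof plan for Lemma \ref{morrey}.} The argument follows the hole-filling strategy of Morrey and L.~Simon referenced in the introduction, adapted to the degenerate fourth-order system \eqref{eqsystem}. The crucial preparatory observation is that in two dimensions with $p>2$, the Morrey embedding $W^{2,p} \hookrightarrow C^{1,1-2/p}$ delivers a uniform bound $\|Du\|_{L^\infty}\le \Lambda$ (so that the constants in \eqref{eqellipticity}, \eqref{eqgrowth} apply with a fixed $\Lambda$), together with the pointwise Taylor remainder estimates
\[
\|u-P\|_{L^\infty(B_{2r})}\le Cr^{2-2/p}\omega(r), \qquad \|D(u-P)\|_{L^\infty(B_{2r})}\le Cr^{1-2/p}\omega(r),
\]
where $P(y)=u(x)+Du(x)(y-x)$ is the affine Taylor polynomial at the centre and $\omega(r):=\|D^2u\|_{L^p(B_{2r})}$. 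By absolute continuity of the integral $\omega(r)\to 0$ as $r\to 0$, and this smallness is what ultimately drives the iteration.

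I would then fix a standard cutoff $\eta\in C^\infty_c(B_{2r})$ with $\eta\equiv 1$ on $B_r$ and $|\partial^k\eta|\le C r^{-k}$, and test the weak form $\int a\,\partial^2\varphi-\int b\,\partial\varphi+\int c\,\varphi=0$ of \eqref{eqsystem} against $\varphi=\eta^{2p}(u-P)$. The principal piece $\int a^i_{\alpha\beta}\eta^{2p}\partial^2_{\alpha\beta}u^i$ is bounded below by $c\int \eta^{2p}V^p-C\int \eta^{2p}$: one writes
\[
a^i_{\alpha\beta}\partial^2_{\alpha\beta}u^i = \int_0^1 \frac{\partial a^i_{\alpha\beta}}{\partial q^j_{\lambda\mu}}(x,u,Du,tD^2u)\,\partial^2_{\lambda\mu}u^j\partial^2_{\alpha\beta}u^i\,dt + a^i_{\alpha\beta}(x,u,Du,0)\partial^2_{\alpha\beta}u^i,
\]
applies the ellipticity \eqref{eqellipticity} to the first term and Young's inequality to the $V^{p-1}|D^2u|$ remainder. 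The equation then lets us re-express the same quantity as $\int b\,\partial\varphi-\int c\,\varphi$ plus remainders in which $\eta$ is differentiated (hence supported on $B_{2r}\setminus B_r$) and that always carry a factor of $(u-P)$ or $D(u-P)$. Using the growth bounds \eqref{eqgrowth} together with the Taylor estimates above and H\"older's inequality, every error term is either proportional to $\int_{B_{2r}\setminus B_r}V^p$, or to $\omega(r)\phi(2r)^{(p-1)/p}$ (small, as $\omega(r)\to 0$), or to $Cr^2$. Setting $\phi(r):=\int_{B_r(x)}V^p$, we arrive at a Caccioppoli-type inequality
\[
\phi(r) \le C_0\bigl(\phi(2r)-\phi(r)\bigr) + \varepsilon(r)\phi(2r) + Cr^2, \qquad \varepsilon(r)\to 0 \text{ as } r\to 0.
\]

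Adding $C_0\phi(r)$ to both sides (the hole-fill) and choosing $r_0$ small enough that $\varepsilon(r)$ is harmless for $r\le r_0$ produces the iteration relation $\phi(r)\le \theta\phi(2r)+Cr^2$ with $\theta<1$, from which \eqref{morrey1} follows by a standard iteration lemma. The main obstacle is verifying that each cutoff error term can indeed be packaged in one of the three harmless forms above. The delicate case is the worst-order remainder $\int a\cdot\partial\eta\cdot D(u-P)\cdot\eta^{2p-1}$ coming from moving two derivatives off $a$: the bound $|a|\le CV^{p-1}$ combined with H\"older and the pointwise estimate $\|D(u-P)\|_\infty\le Cr^{1-2/p}\omega(r)$ yields a contribution of size $C\omega(r)\phi(2r)^{(p-1)/p}$, and only the absolute continuity $\omega(r)\to 0$ makes this a genuine lower-order term suitable for absorption. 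It is exactly this absorption that forces the smallness condition $r<r_0$ in the statement.
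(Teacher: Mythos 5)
Your overall hole-filling scheme matches the paper's: test against $\eta^{k}(u-\text{affine})$, use ellipticity to bound $\int\eta^{k}V^p$ from below, split the cutoff errors into absorbable pieces plus $\int_{A_r}V^p$ plus $Cr^2$, and iterate. However, your treatment of the critical cutoff error contains a genuine gap, and it arises precisely from your choice of affine comparison function.

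You take $P$ to be the first-order Taylor polynomial of $u$ at the centre, and bound the worst error $\int_{A_r} a\,\partial\eta\,D(u-P)\,\eta^{2p-1}$ by $C\,\omega(r)\,\phi(2r)^{(p-1)/p}$ using the $L^\infty$ Morrey estimate $\|D(u-P)\|_{L^\infty(B_{2r})}\le Cr^{1-2/p}\omega(r)$ with $\omega(r)=\|D^2u\|_{L^p(B_{2r})}$. You then claim this equals $\varepsilon(r)\phi(2r)$ with $\varepsilon(r)\to 0$ thanks to absolute continuity. This is false: since $\omega(r)^p\le\phi(2r)$, the natural estimate is $\omega(r)\phi(2r)^{(p-1)/p}\le\phi(2r)$ with $\varepsilon(r)=\omega(r)/\phi(2r)^{1/p}\le 1$, and this ratio does \emph{not} tend to zero in general — for example, if $V$ is identically a constant $V_0>1$ on $B_{2r}$, then $\omega(r)/\phi(2r)^{1/p}=\sqrt{V_0^2-1}/V_0$, a fixed positive number for all small $r$. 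So absolute continuity of $\omega$ does not convert this term into something absorbable into the hole-filling, and your Caccioppoli inequality is not established.

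The paper circumvents this by a different choice of affine function: $l_r$ has value and slope equal to the averages of $u$ and $Du$ over the \emph{annulus} $A_r$, so Poincar\'e gives $\|u-l_r\|_{L^p(A_r)}+r\|D(u-l_r)\|_{L^p(A_r)}\le Cr^2\|D^2u\|_{L^p(A_r)}$ — with the right-hand side localized to $A_r$, not the full ball. Then the worst cutoff error is bounded by $Cr^{-1}\|V\|_{L^p(A_r)}^{p-1}\|D(u-l_r)\|_{L^p(A_r)}\le C\int_{A_r}V^p=C(\phi(2r)-\phi(r))$, which lands \emph{exactly} in the hole-filling slot with no smallness needed. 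The $L^\infty$ bound $\|u-l_r\|_{L^\infty}+r\|D(u-l_r)\|_{L^\infty}\le Cr^{1+\gamma}$ (with $\gamma=1-2/p$, and no $\omega(r)$ needed) is used only for the lower-order terms $\int\eta^k b\,D(u-l_r)$ and $\int\eta^k c\,(u-l_r)$, whose coefficients are $\le CV^p$; these give $Cr^\gamma\int\eta^kV^p$, and \emph{this} is the piece that requires $r<r_0$ to be absorbed into the left-hand side. Your identification of where the smallness condition $r<r_0$ is needed is thus misplaced. Your version could in principle be repaired by noting that the error localizes to the annulus (so the correct bound is $C\omega(r)(\phi(2r)-\phi(r))^{(p-1)/p}$), then applying Young's inequality with a small parameter together with $\omega(r)^p\le\phi(2r)$, but as written the crucial absorption step does not go through.
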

\begin{proof}
Let $r_0>0$. Since $u\in W^{2,p}\cap L^\infty(\Omega, \R^m)$ we get from the Sobolev embedding theorem that $u\in C^{1,\gamma}(\Omega, \R^m)$ for some $\gamma>0$. Now we choose $x_0\in \Omega$ and we let $0<2r<\min\{2r_0,\text{dist}(x_0,\partial \Omega)\}$. Moreover we let $A_r= B_{2r} \backslash B_r(x_0)$ and $\varphi \in C^\infty_c (B_{2r}(x_0))$ be a smooth cut-off function which satisfies
\begin{align}
0\le \varphi \le 1,\ \ \  \varphi=1 \ \ \text{in}\ \  B_r(x_0),\ \ \  ||\nabla^j \varphi||_{L^\infty} \le cr^{-j} \ \ \forall j\in \N. \label{test1}
\end{align}
Finally we define the linear function $l_r$ by
\begin{align*}
l_r(x) = \frac{1}{|A_r|}\int_{A_r} u+ (x-x_0)\cdot \frac{1}{|A_r|}\int_{A_r} D u.
\end{align*}
>From this definition it easily follows that we have the estimates
\begin{align}
||u-l_r||_{L^\infty(B_{2r})} +r||D (u-l_r)||_{L^\infty(B_{2r})} &\le Cr^{1+\gamma}\ \ \ \text{and} \label{morrey2}\\
||u-l_r||_{L^p(A_r)} +r||D (u-l_r)||_{L^p(A_r)} &\le C r^2||D^2 u||_{L^p(A_r)} . \label{morrey3}
\end{align}
Now we choose $\varphi^4 (u-l_r)$ as a test function in the weak form of the system \eqref{eqsystem} and we get
\begin{align*}
|\int_\Omega \varphi^4 a_{\alpha \beta}^i \partial_{\alpha \beta}^2 u^i| \le& C \int_\Omega \varphi^4 (|b||D (u-l_r)|+|c||u-l_r|)\\
&+Cr^{-1}\int_{A_r} \varphi^3 (|a||D(u-l_r)|+|b||u-l_r|)\\
&+Cr^{-2} \int_{A_r} \varphi^2 |a||u-l_r|\\
=& I+II+III.
\end{align*}
Using the ellipticity assumption and the bound $|a(x,z,p,0)|\le C$ (which follows from \eqref{eqgrowth}) we estimate
\begin{align*}
q_{\alpha \beta}^{i} a_{\alpha \beta}^{i}(x,y,z,q)&=q_{\alpha \beta}^{i} a_{\alpha \beta}^{i}(x,y,z,0)+q_{\alpha \beta}^{i}q_{\gamma \delta}^j\int_0^1 a_{\alpha \beta,q_{\gamma \delta}^j}^i(x,y,z,tq)dt\\
&\ge q_{\alpha \beta}^{i} a_{\alpha \beta}^{i}(x,y,z,0)+\lambda |q|^2\int_0^1 (1+t^2|q|^2)^{\frac{p-2}{2}}dt\\
&\ge \tilde \lambda V^p-C
\end{align*}
where $\tilde \lambda>0$ is some number. Next we use \eqref{eqgrowth}, H\"olders inequality and the estimates \eqref{morrey2} and \eqref{morrey3} to obtain
\begin{align*}
I\le& Cr^\gamma \int_\Omega \varphi^4 V^p, \\
II \le&  Cr^\gamma\int_{A_r} V^p+Cr^{-1}||V||_{L^p(A_r)}^{p-1}||D(u-l_r)||_{L^p(A_r)}\le C\int_{A_r} V^p\ \ \ \text{and} \\
III \le& Cr^{-2}(\int_{A_r} V^p)^{\frac{p-1}{p}}||u-l_r||_{L^p(A_r)}\le C\int_{A_r} V^p.
\end{align*}
Combining all these estimates and choosing $r_0$ small enough we conclude that there exists a constant $C_1>0$ such that
\begin{align*}
\int_{B_r} V^p \le C_1 \int_{A_r} V^p +Cr^2.
\end{align*}
Adding $C_1 \int_{B_r} V^p$ to both sides of this estimate we get
\begin{align*}
\int_{B_r} V^p\le \frac{C_1}{C_1+1} \int_{B_{2r}} V^p +Cr^2.
\end{align*}
The estimate \eqref{morrey1} now follows from a standard iteration argument.
\end{proof}
\subsubsection{Difference quotient estimates}\label{diffquo}
In a second step we use the difference quotient method to show that every weak solution $u \in W^{2,p}\cap L^\infty(\Omega,\R^m)$ of \eqref{eqsystem} is in $W^{3,2}(B_{\frac{r_0}{4}}(x_0),\R^m)$ and that moreover $U=V^{\frac{p}{2}}\in W^{1,2}(B_{\frac{r_0}{4}}(x_0))$, where $x_0 \in \Omega$ and $r_0$ is as in Lemma \ref{morrey} (in the following we allow the constants to depend on $r_0$). We follow closely the methods developed in \cite{morrey66} and \cite{simon93}.

In the following we use the abbreviation $U(x) = \big(x,u(x),Du(x),D^2 u(x)\big)$.  
Applying the difference quotient to the equation \eqref{eqsystem} and interchanging with the derivatives yields
\begin{equation}
\label{eqdiffsystem}
\partial^2_{\alpha \beta} \left[a^i_{\alpha \beta} \circ U\right]_h +
\partial_{\alpha} \left[b^i_{\alpha} \circ U\right]_h +
\left[c^i \circ U\right]_h = 0. 
\end{equation}
We may use the fundamental theorem of Calculus to write 
\begin{eqnarray*} 
\left[f \circ U\right]_h (x) & = & \frac{1}{h}\,\Big(f(U(x+he_\nu)) - f(U(x))\Big)\\
& = & \frac{1}{h} \int_0^1 \frac{d}{dt}\, f\big((1-t)U(x) + t\, U(x+h e_\nu)\big) \,dt\\
& = & \int_0^1 Df\big((1-t)U(x) + t\, U(x+he_\nu)\big)\,dt \cdot U_h(x).
\end{eqnarray*}
Using the notation $f^h(x) = \int_0^1 f\big((1-t)U(x) + t U(x+he_\nu)\big)\,dt$ 
we thus get 
$$
\left[f \circ U\right]_h = 
\Big(\frac{\partial f}{\partial q_{\lambda \mu}^j}\Big)^h  \partial^2_{\lambda \mu}  u_h^j
+ \Big(\frac{\partial f}{\partial p_{\lambda}^j}\Big)^h \partial_{\lambda} u_h^j
+ \Big(\frac{\partial f}{\partial z^j}\Big)^h u_h^j + \Big(\frac{\partial f}{\partial x_\nu}\Big)^h,
$$
and the system (\ref{eqdiffsystem}) takes the form
\begin{equation}
\label{eqdiffsystem2}
\partial^2_{\alpha \beta} \left[\tilde{a}^i_{\alpha \beta}(\cdot,u_h,Du_h,D^2 u_h)\right] +
\partial_{\alpha} \left[\tilde{b}^i_{\alpha}(\cdot,u_h,Du_h,D^2 u_h)\right]+
\tilde{c}^i(\cdot,u_h,D u_h,D^2 u_h) = 0.
\end{equation}
Here the coefficients are given as follows:
\begin{align*} 
\tilde{a}^i_{\alpha \beta}(x,z,p,q) =& 
\Big(\frac{\partial a^i_{\alpha \beta}}{\partial q_{\lambda \mu}^j}\Big)^h(x)  q^j_{\lambda \mu} 
+ \Big(\frac{\partial a^i_{\alpha \beta}}{\partial p_{\lambda}^j}\Big)^h(x) p^j_\lambda 
+ \Big(\frac{\partial a^{i}_{\alpha \beta}}{\partial z^j}\Big)^h(x) z^j \\
&+ \Big(\frac{\partial a^i_{\alpha \beta}}{\partial x_\nu}\Big)^h(x),\\
\tilde{b}^i_{\alpha}(x,z,p,q) =&
\Big(\frac{\partial b^i_{\alpha}}{\partial q_{\lambda \mu}^j}\Big)^h(x)  q^j_{\lambda \mu} 
+ \Big(\frac{\partial b^i_{\alpha}}{\partial p_{\lambda}^j}\Big)^h(x) p^j_\lambda 
+ \Big(\frac{\partial b^{i}_{\alpha}}{\partial z^j}\Big)^h(x) z^j 
+ \Big(\frac{\partial b^i_{\alpha}}{\partial x_\nu}\Big)^h(x),\\
\tilde{c}^i(x,z,p,q) =&
\Big(\frac{\partial c^i}{\partial q_{\lambda \mu}^j}\Big)^h(x)  q^j_{\lambda \mu} 
+ \Big(\frac{\partial c^i}{\partial p_{\lambda}^j}\Big)^h(x) p^j_\lambda  
+ \Big(\frac{\partial c^i}{\partial z^j}\Big)^h(x) z^j 
+ \Big(\frac{\partial c^i}{\partial x_\nu}\Big)^h(x).
\end{align*}
In order to state bounds for these coefficients, we introduce the abbreviation 
$$
I_{s,h}(x) = 
\int_0^1 \Big(1+ \big|(1-t)D^2 u(x) + t D^2 u(x+h)\big|^2\Big)^{s/2}\,\,dt.
$$
Using (\ref{eqellipticity}) and (\ref{eqgrowth}) we then obtain 
\begin{eqnarray*}
|\tilde{a}(x,z,p,q)| & \leq & C \Big(I_{p-2,h}(x) |q| + I_{p-1,h}(x) (|p| + |z| + 1)\Big)\\
|\tilde{b}(x,z,p,q)|+|\tilde{c}(x,z,p,q)| & \leq & C \Big(I_{p-1,h}(x) |q| + I_{p,h}(x) (|p| + |z| + 1)\Big).
\end{eqnarray*}
As above we define the linear function $l_{h,r}=:l_h$ by
\begin{align*}
l_h(x) = \frac{1}{|A_r|}\int_{A_r} u_h+ (x-x_0)\cdot \frac{1}{|A_r|}\int_{A_r} D u_h.
\end{align*}
Using again the test function $\varphi^4 (u_h -l_h)$, we infer
\begin{align*}
\int \tilde{a}^i_{\alpha \beta} \partial^2_{\alpha \beta} u_h^i \varphi^4 \leq&
C \int \Big(I_{p-1,h} |D^2 u_h| + I_{p,h} (|Du_h| + |u_h| + 1)\Big)\\
&\cdot \Big(|D(u_h-l_h)| + |u_h-l_h|\Big)\,\varphi^4\\
&+ \frac{C}{r} \int \big(I_{p-2,h} |D(u_h-l_h)| + I_{p-1,h} |u_h-l_h|\big)\, |D^2 u_h|\,\varphi^3\\
&+ \frac{C}{r} \int \big(I_{p-1,h} |D(u_h-l_h)| + 
I_{p,h} |u_h-l_h|\big) \\
&\cdot \big(|Du_h| + |u_h| + 1\big)\,\varphi^3\\
&+ \frac{C}{r^2} \int \Big(I_{p-2,h} |D^2 u_h| + I_{p-1,h} (|Du_h| + |u_h| + 1)\Big)\\
&\cdot |u_h-l_h|\,\varphi^2.
\end{align*}
On the other hand we have the ellipticity condition (using \eqref{eqellipticity})
\begin{eqnarray*}
\label{eqellipticitytilde}
\tilde{a}^i_{\alpha \beta}(x,z,p,q) q^i_{\alpha \beta} & = & 
\int_0^1 \frac{\partial a^i_{\alpha \beta}}{\partial q_{\lambda \mu}^j}\big((1-t)U(x) + t U(x+h)\big)
q^i_{\alpha \beta} q^j_{\lambda \mu}\,dt\\
&+& \int_0^1 \frac{\partial a^i_{\alpha \beta}}{\partial p_{\lambda}^j}\big((1-t)U(x) + t U(x+h)\big)
p^j_\lambda q^i_{\alpha \beta}\,dt\\
&+& \int_0^1  \frac{\partial a^{i}_{\alpha \beta}}{\partial z^j}\big((1-t)U(x) + t U(x+h)\big)
z^j q^i_{\alpha \beta}\,dt\\
&+& \int_0^1 \frac{\partial a^i_{\alpha \beta}}{\partial x_\nu}\big((1-t)U(x) + t U(x+h)\big)
q^i_{\alpha \beta}\,dt\\
& \geq & C \lambda \,I_{p-2,h} |q|^2 - C I_{p-1,h}\,|q| \big(|p| + |z| + 1\big).
\end{eqnarray*}
Combining the two inequalities we arrive at
\begin{align*}
\int I_{p-2,h} |D^2 u_h|^2 \varphi^4  \leq& 
C \int I_{p-1,h} |D^2 u_h| 
\Big(|D(u_h-l_h)| \varphi^4 \\
&+ |u_h-l_h| (\varphi^4 + \frac{\varphi^3}{r}) + 
(|Du_h| + |u_h| +1)\varphi^4\Big)\\ 
&+ C \int I_{p-2,h} |D^2 u_h| 
\Big(|D(u_h-l_h)| \frac{\varphi^3}{r} + |u_h-l_h| \frac{\varphi^2}{r^2}\Big)\\
&+ C \int I_{p,h} \big(|Du_h| + |u_h| + 1\big)
\Big(|D(u_h-l_h)| \varphi^4  \\
&+ |u_h-l_h| (\varphi^4 + \frac{\varphi^3}{r})\Big)\\
&+ C \int I_{p-1,h} \big(|Du_h| + |u_h| + 1\big)
\Big(|D(u_h-l_h)| \frac{\varphi^3}{r} \\
&+ |u_h-l_h| \frac{\varphi^2}{r^2}\Big).
\end{align*}
Using $I_{p-1,h} \leq I_{p-2,h}^{1/2} I_{p,h}^{1/2}$ and absorbing the 
second derivatives of $u_h$ yields 
\begin{align*}
\int I_{p-2,h} |D^2 u_h|^2 \varphi^4  \leq&
C \int I_{p,h} \Big(|D(u_h-l_h)|^2 + |u_h-l_h|^2 + |Du_h|^2 \\
&+ |u_h|^2 + 1\Big) \varphi^4+ C \int I_{p,h} |u_h-l_h|^2 \frac{\varphi^2}{r^2}\\
&+C \int I_{p-2,h} \Big(|D(u_h-l_h)|^2 \frac{\chi_{A_r}}{r^2}  + |u_h-l_h|^2 \frac{\chi_{A_{r}}}{r^4}\Big)\\
=& I + II + III.
\end{align*}

Before continuing we need to recall the following Lemma which is essentially due to Morrey \cite{morrey66}, Lemma $5.4.2$. In the form stated here it can be found in \cite{simon93}. 
\begin{lem}\label{mor}
Let $r>0$ and let the function $q\ge 0$ be such that
\begin{align*}
\int_{B_{s}(x) \cap B_r} q\le cs^\gamma
\end{align*}
for all $B_s(x) \subset B_r$. Then for every $\eps>0$ there exists $C_\eps>0$ such that
\begin{align*}
\int_{B_r} q|v|^2 \le \eps r^\gamma \int_{B_r} |Dv|^2+C_\eps r^{\gamma-2}\int_{B_r} |v|^2.
\end{align*}
\end{lem}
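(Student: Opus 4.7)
The plan is to reduce, by scaling, to the case $r = 1$ and then to decompose $v$ at a scale $\rho = \rho(\eps) \in (0,1)$ chosen sufficiently small. Under this reduction the claim becomes
\begin{equation*}
\int_{B_1} q\,|v|^2 \le \eps \int_{B_1} |Dv|^2 + C_\eps \int_{B_1} |v|^2,
\end{equation*}
and for each $y \in B_1$ I would set $B(y) := B_\rho(y) \cap B_1$ and write $v(y) = v_{B(y)} + \big(v(y) - v_{B(y)}\big)$, where $v_{B(y)}$ denotes the average of $v$ on $B(y)$.

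For the average piece, Jensen's inequality gives $|v_{B(y)}|^2 \le C\rho^{-2}\int_{B(y)}|v|^2$, and swapping the order of integration together with the Morrey hypothesis $\int_{B_\rho(x)} q \le c\rho^\gamma$ yields
\begin{equation*}
\int_{B_1} q(y)\,|v_{B(y)}|^2\,dy \le C\rho^{-2} \int_{B_1} |v(x)|^2 \int_{B_\rho(x)} q(y)\,dy\,dx \le C\rho^{\gamma-2}\int_{B_1}|v|^2.
\end{equation*}

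For the oscillation term I would use the standard pointwise Poincar\'e bound $|v(y) - v_{B(y)}| \le C \int_{B(y)} |Dv(x)|/|x-y|\,dx$ combined with a Cauchy-Schwarz splitting $|x-y|^{-1} = |x-y|^{-(1-\delta/2)} \cdot |x-y|^{-\delta/2}$, which gives
\begin{equation*}
|v(y) - v_{B(y)}|^2 \le C\rho^{2-\delta}\int_{B(y)}\frac{|Dv(x)|^2}{|x-y|^{2-\delta}}\,dx.
\end{equation*}
Multiplying by $q(y)$, applying Fubini, and decomposing $B_\rho(x)$ dyadically into annuli $A_j = \{2^{-j-1}\rho \le |x-y| < 2^{-j}\rho\}$, the Morrey bound $\int_{A_j} q \le c(2^{-j}\rho)^\gamma$ leads to
\begin{equation*}
\int_{B_\rho(x)}\frac{q(y)}{|x-y|^{2-\delta}}\,dy \le C\rho^{\gamma+\delta-2}\sum_{j=0}^\infty 2^{j(2-\delta-\gamma)}.
\end{equation*}
The key step is the choice $\delta \in (2-\gamma, 2)$, a nonempty interval precisely because $\gamma > 0$: the series then converges and the $\rho$-powers combine to give $\int_{B_1} q\,|v-v_{B(y)}|^2 \le C\rho^\gamma \int_{B_1}|Dv|^2$.

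Adding the two estimates and rescaling back to general $r$ produces $\int_{B_r} q|v|^2 \le C\rho^\gamma\int_{B_r}|Dv|^2 + C\rho^{\gamma-2}\int_{B_r}|v|^2$ for every $\rho \in (0,r)$, and one simply chooses $\rho$ with $C\rho^\gamma \le \eps r^\gamma$ and sets $C_\eps := C\rho^{\gamma-2}$. The main technical nuisance, which I expect to be the only nontrivial obstacle, is a boundary issue: when $y$ lies close to $\partial B_1$, the set $B(y)$ may be far smaller than $|B_\rho|$, so Poincar\'e does not apply to it with a scale-invariant constant; this can be handled either by extending $v$ to a slightly larger ball, or by covering the boundary layer of $B_1$ by a bounded family of balls $B_\rho(x_i)$ of controlled shape on which the standard Poincar\'e argument applies and then summing.
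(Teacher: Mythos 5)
Your proof is correct, and since the paper does not give a proof of this lemma---it cites Morrey's \cite{morrey66} Lemma 5.4.2 and Simon's version in \cite{simon93}---there is no in-paper argument to compare against. Your Riesz-potential approach (split $v = v_{B(y)} + (v - v_{B(y)})$, handle the average by Fubini and the Morrey density bound, handle the oscillation by the pointwise representation $|v(y)-v_{B(y)}|\le C\int_{B(y)}|Dv(x)|/|x-y|\,dx$, Cauchy--Schwarz, and dyadic annuli) is precisely the classical argument underlying the cited references, with the critical observation correctly identified: $\gamma>0$ makes the interval $(2-\gamma,2)$ nonempty, which is what makes the geometric series converge and produces the extra $\rho^\gamma$ gain on the gradient term.

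Two small remarks. First, the ``boundary obstacle'' you flag at the end is not actually an obstacle. For any $y\in B_1$ and $\rho\in(0,1)$, the convex set $B(y)=B_\rho(y)\cap B_1$ contains the ball $B_{\rho/3}\big(y-\tfrac{\rho}{3}\hat y\big)$ (check: this point has distance at least $\rho/3$ to $\partial B_1$ and distance $2\rho/3$ to $\partial B_\rho(y)$), so $|B(y)|\ge c\rho^2$ while $\operatorname{diam} B(y)\le 2\rho$. The standard Riesz-potential estimate for convex domains (e.g.\ Gilbarg--Trudinger, Lemma 7.16) has constant $(\operatorname{diam}\Omega)^n/(n|\Omega|)$, which is therefore uniformly bounded over all $y\in B_1$; no extension or boundary-layer covering is needed. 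Second, a minor bookkeeping point: in the final step you should set $C_\eps = C(\rho/r)^{\gamma-2}$ rather than $C\rho^{\gamma-2}$ so that the estimate reads $\eps r^\gamma\int|Dv|^2 + C_\eps r^{\gamma-2}\int|v|^2$; this is harmless because you choose $\rho$ proportional to $r$ (namely $\rho = (\eps/C)^{1/\gamma}r$), so $\rho/r$ depends only on $\eps$. Finally, be aware that the hypothesis is literally stated for balls $B_s(x)\subset B_r$ only; after rescaling you use $\int_{B_\rho(x)\cap B_1}q\le c\rho^\gamma$ for arbitrary $x\in B_1$, which follows from the stated form by a trivial covering of $B_\rho(x)\cap B_1$ by finitely many balls of radius $\rho/4$ contained in $B_1$, at the cost of a dimensional constant.
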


Next we use this Lemma in order to estimate the terms $I$-$III$ from above. By the definition of $I_{p,h}$ and Lemma \ref{morrey} we get that 
\begin{align*}
\int_{B_s(x)}I_{p,h} \le Cs^\beta
\end{align*}
for all $B_s(x)\subset B_{2r}\subset B_{r_0}$ and some $0<\beta<1$. Hence we can apply Lemma \ref{mor} and H\"older's respectively Poincar\'e's inequality to estimate
\begin{align*}
II\le& Cr^{\beta-2} ||D(u_h-l_h)||_{L^{2}(B_{2r})}^2.
\end{align*}
Using the same argument we get 
\begin{align*}
I\le& C r^\beta \int_{B_{2r}} \varphi^4(|D^2 u_h|^2+|D(u_h-l_h)|^2)\\
&+Cr^{\beta-2}\int_{B_{2r}}  (|D u_h|^2+|D(u_h-l_h)|^2+|u_h|^2+|u_h-l_h|^2+I_{p,h}).
\end{align*}
Inserting these two estimates into the above estimate for $\int I_{p-2,h} |D^2 u_h|^2 \varphi^4$ we conclude that
\begin{align}
\int I_{p-2,h} |D^2 u_h|^2 \varphi^4\le& C r^\beta \int_{B_{2r}} \varphi^4(|D^2 u_h|^2+|D(u_h-l_h)|^2)+Cr^{\beta-2}\int_{B_{2r}} \Big(|D u_h|^2 \nonumber \\
&+|D(u_h-l_h)|^2+|u_h|^2+|u_h-l_h|^2+I_{p,h}\Big) \nonumber \\
&+C \int I_{p-2,h} \Big(|D(u_h-l_h)|^2 \frac{\chi_{A_r}}{r^2}  + |u_h-l_h|^2 \frac{\chi_{A_{r}}}{r^4}\Big).\label{important}
\end{align}
Next we use H\"older`s and Poincar\'e's inequality to get
\begin{align*}
\int I_{p-2,h} \Big(&|D(u_h-l_h)|^2 \frac{\chi_{A_r}}{r^2}  + |u_h-l_h|^2 \frac{\chi_{A_{r}}}{r^4}\Big) \\
\le& C||I_{p-2,h}||_{L^{\frac{p}{p-2}}(B_{2r})}(r^{-4}||u_h-l_h||_{L^{p}(A_r)}^2+r^{-2}||D(u_h-l_h)||_{L^{p}(A_r)})\\
\le& Cr^{-2} ||I_{p-2,h}||_{L^{\frac{p}{p-2}}(B_{2r})}||D(u_h-l_h)||_{L^{p}(A_r)}^2.
\end{align*}

Since $u\in W^{2,p}(\Omega)$ we know from Theorem $3.6.8$ in \cite{morrey66} that
\begin{align*}
I_{s,h} \rightarrow V^s \ \ \ \text{in}\ \ L^{\frac{p}{s}}\ \ \ \forall\ \ 1\le s\le p.
\end{align*}

We combine all the above estimates to get ($l_h\rightarrow \mint_{A_r}\partial_\nu u+(x-x_0)\mint_{A_r}\partial_\nu Du$)
\begin{align*}
\int \varphi^4 I_{p-2,h} |D^2 u_h|^2 \le& C(1+r^{-2})\int_{B_{2r}}V^p+Cr^{\beta-2}\int_{B_{2r}}  |D^2 u|^2 +Cr^{\beta}. 
\end{align*}
In particular this estimate is true for $r=\frac{r_0}{4}$ and therefore we can let $h\rightarrow 0$ to conclude 
\begin{align}
\int_{B_{\frac{r_0}{8}}} V^{p-2} |D^3 u|^2 \le& C (1+r_0^{-2})\int_{B_{\frac{r_0}{4}}}V^p+cr_0^{\beta-2}\int_{B_{\frac{r_0}{4}}}  |D^2 u|^2+Cr_0^{\beta}.\label{w32}
\end{align}
Hence we have that $u\in W^{3,2}(B_{\frac{r_0}{8}})$ and by the Sobolev embedding theorem this implies $u\in W^{2,q}_{\text{loc}}(B_{\frac{r_0}{8}})$ for all $q<\infty$. Moreover the above estimate yields that $U\in W^{1,2}(B_{\frac{r_0}{8}})$. Altogether this shows that we can improve Lemma \ref{morrey} to get the estimate
\begin{align}
\int_{B_r} V^p\le cr^{2-\delta}\label{morrey3ab}
\end{align}
for all $r\le \frac{r_0}{16}$  and all $\delta>0$.

\subsubsection{Higher regularity}

In order to obtain the higher regularity for weak solutions of \eqref{eqsystem} we need to recall the following estimate from the previous subsection:
\begin{align*}
\int \varphi^4 \tilde{a}^i_{\alpha \beta} \partial^2_{\alpha \beta} u^i_h \le& C\int \varphi^4 \Big(I_{p-1,h}|D^2 u_h|+I_{p,h}(|Du_h|+|u_h|+1)\Big) \Big( |D(u_h-l_h)|\\
&+|u_h-l_h|\Big)\\
&+ \frac{C}{r}\int \varphi^3  \Big( I_{p-2,h}|D(u_h-l_h)|+I_{p-1,h}|u_h-l_h|\Big)|D^2 u_h|\\
&+ \frac{C}{r}\int \varphi^3  \Big( I_{p-1,h}|D(u_h-l_h)|+I_{p,h}|u_h-l_h|\Big)\Big(|D u_h|\\
&+|u_h|+1\Big)\\
&+ \frac{C}{r^2}\int \varphi^2 \Big(I_{p-2,h}|D^2 u_h|+I_{p-1,h}(|Du_h|+|u_h|+1)\Big) |u_h-l_h|\\
=&I+\ldots+IV.
\end{align*}
This time we choose $l_h$ such that
\begin{align*}
\int_{B_{2r}} (u_h-l_h)=&0\ \ \ \text{and}\\
\int_{B_{2r}} (Du_h-Dl_h)=&0.
\end{align*}
Because of \eqref{morrey3ab} and the strong convergence $I_{s,h}\rightarrow V^s$ in $L^{\frac{p}{s}}$ we have for every $r\le \frac{r_0}{16}$, every $h$ small enough and every $\delta>0$
\begin{align}
\int_{B_r} I_{p,h} \le cr^{2-\delta}.\label{morrey3a}
\end{align}
Now we estimate again each term seperately. We start with $I$. By Young`s inequality we get
\begin{align*}
I\le& \eps \int \varphi^4 I_{p-2,h} |D^2 u_h|^2 +C\int \varphi^4 I_{p,h}\Big(|Du_h|^2+|u_h|^2+1\Big)\\
&+ C\int \varphi^4 I_{p,h}\Big(|D(u_h-l_h)|^2+|u_h-l_h|^2\Big)
\end{align*}
and we continue to estimate the last term with the help of Lemma \ref{mor}, \eqref{morrey3a} and Poincar\'e`s inequality by
\begin{align*}
\int \varphi^4 I_{p,h}\Big(|D(u_h-l_h)|^2+|u_h-l_h|^2\Big)\le& Cr^{2-\delta}\int_{B_{2r}} (|D^2(u_h-l_h)|^2+|D(u_h-l_h)|^2)\\
&+Cr^{-\delta}\int_{B_{2r}} (|D(u_h-l_h)|^2+|u_h-l_h|^2)\\
\le&  Cr^{2-\delta} \int_{B_{2r}} |D^2 u_h|^2.
\end{align*}
Next we estimate
\begin{align*}
II\le& \eps \int \varphi^4 I_{p-2,h} |D^2 u_h|^2+\frac{C}{r^2} \int_{B_{2r}} I_{p,h} |u_h-l_h|^2\\
&+\frac{C}{r}(\int_{B_{2r}} I_{p-2,h}^{\frac43}|D^2 u_h|^{\frac43})^{\frac34}(\int_{B_{2r}} |D(u_h-l_h)|^4)^{\frac14}.
\end{align*}
The second term can be estimated as above to yield
\begin{align*}
\frac{C}{r^2} \int_{B_{2r}} I_{p,h} |u_h-l_h|^2\le Cr^{2-\delta} \int_{B_{2r}} |D^2 u_h|^2
\end{align*}
and for the third term we use the Sobolev-Poincar\'e inequality to get
\begin{align*}
\frac{C}{r}(\int_{B_{2r}} I_{p-2,h}^{\frac43}|D^2 u_h|^{\frac43})^{\frac34}(\int_{B_{2r}} |D(u_h-l_h)|^4)^{\frac14}\le \frac{C}{r} (\int_{B_{2r}} I_{p-2,h}^{\frac43}|D^2 u_h|^{\frac43})^{\frac32}.
\end{align*}
$III$ can be estimated by 
\begin{align*}
III\le& \frac{\gamma}{r^2} \int_{B_{2r}} (|D(u_h-l_h)|^2+I_{p,h}|u_h-l_h|^2)\\
&+C_\gamma \int_{B_{2r}}(I_{2p-2,h}+I_{p,h})(|Du_h|^2+|u_h|^2+1)\\
\le&  C(\gamma+r^{2-\delta}) \int_{B_{2r}} |D^2 u_h|^2+C_\gamma \int_{B_{2r}}(I_{2p-2,h}+I_{p,h})(|Du_h|^2+|u_h|^2+1).
\end{align*}
Finally, using some of the estimates from above, the last term is estimated as follows
\begin{align*}
IV\le& \frac{C}{r^2}(\int_{B_{2r}} I_{p-2,h}^{\frac43}|D^2 u_h|^{\frac43})^{\frac34}(\int_{B_{2r}} |u_h-l_h|^4)^{\frac14}\\
&+ \frac{\gamma}{r^4}\int_{B_{2r}} |u_h-l_h|^2 +C_\gamma \int_{B_{2r}}I_{2p-2,h}(|Du_h|^2+|u_h|^2+1)\\
\le& \frac{C}{r} (\int_{B_r} I_{p-2,h}^{\frac43}|D^2 u_h|^{\frac43})^{\frac32}+C\gamma \int_{B_{2r}} |D^2 u_h|^2\\
&+ C_\gamma \int_{B_{2r}}I_{2p-2,h}(|Du_h|^2+|u_h|^2+1).
\end{align*}
We also note that we have the ellipticity estimate 
\begin{align*}
\int \varphi^4 I_{p-2,h} |D^2u_h|^2 \le C\int \varphi^4 \tilde{a}^i_{\alpha \beta} \partial^2_{\alpha \beta} u^i_h+C\int \varphi^4 I_{p,h}(|Du_h|^2+|u_h|^2+1).
\end{align*}
Combining all these estimates we get
\begin{align*}
\int_{B_{r}}I_{p-2,h} |D^2u_h|^2\le& \frac{C}{r} (\int_{B_{2r}} I_{p-2,h}^{\frac43}|D^2 u_h|^{\frac43})^{\frac32}+C(\gamma+r^{2-\delta}) \int_{B_{2r}} |D^2 u_h|^2\\
&+C_\gamma \int_{B_{2r}}(I_{2p-2,h}+I_{p,h})(|Du_h|^2+|u_h|^2+1).
\end{align*} 
Since $u\in W^{3,2}(B_r)$ and $U\in W^{1,2}(B_r)$ for $r\le \frac{r_0}{16}$ we can let $h\rightarrow 0$ and get
\begin{align*}
\int_{B_{r}}V^{p-2} |D^3 u|^2\le& \frac{C}{r} (\int_{B_{2r}} V^{\frac43(p-2)}|D^3 u|^{\frac43})^{\frac32}+C(\gamma+r_0^{2-\delta}) \int_{B_{2r}} V^{p-2}|D^3 u|^2\\
&+C_\gamma \int_{B_{2r}}V^{2p}.
\end{align*} 
Defining $f=(V^{\frac{p-2}{2}}|D^3 u|)^{4/3}$, $g=V^{\frac{2(p-2)}{3}}$, $h=V^{\frac{4p}{3}}$ and $d=\frac32$ we conclude the inequality
\begin{align}
(\mint_{B_{r}} f^d)^{\frac{1}{d}}\le C\mint_{B_{2r}} fg +C(\gamma+r_0^{2-\delta})^{\frac1d}(\mint_{B_{2r}} f^d)^{\frac{1}{d}} +C_\gamma (\mint_{B_{2r}} h^d)^{\frac1d}\label{bfza}
\end{align}
for all balls $B_r \subset B_{\frac{r_0}{16}}$. Next we need the following Gehring type Lemma, which slightly generalizes Lemma $1.2$ of Bildhauer, Fuchs and Zhong \cite{bildhauer05} (see also Theorem $1.1$ in \cite{faraco05}).
\begin{lem}\label{bfz}
Let $d>1$, $\beta>0$ be two constants. There exists $\eps_0>0$ such that for all all $\eps<\eps_0$ and all non-negative functions $f,g,h:\Omega \subset \R^n \rightarrow \R$ satisfying
\[
f,h\in L^d_{\text{loc}}(\Omega),\ \ \ e^{\beta g^d}\in  L^1_{\text{loc}}(\Omega)
\]
and (for some constant $C>0$)
\begin{align} 
(\mint_B f^d)^{\frac{1}{d}}\le C \mint_{2B}fg +\eps(\mint_{2B}f^d)^{\frac{1}{d}}+(\mint_{2B} h^d)^{\frac{1}{d}}\label{bfz1}
\end{align}
for all balls $B=B_r(x)$ with $B_{2r}(x)\subset \subset \Omega$. Then there exists $c_0=c_0(n,d,C)>0$ such that if 
\[
h^d \text{log}^{c_0 \beta}(e+h)\in L^1_{\text{loc}}(\Omega),
\]
then the same is true for $f$. Moreover, for all balls $B$ as above we have
\begin{align}
\mint_B f^d \text{log}^{c_0 \beta}(e+\frac{f}{||f||_{L^d(2B)}})\le& c(\mint_{2B}e^{\beta g^d})(\mint_{2B}f^d)\nonumber \\
&+c\mint_{2B}h^d \text{log}^{c_0 \beta}(e+\frac{f}{||f||_{d,2B}}),\label{bfz2}
\end{align}
where $c=c(n,d,\beta,C)>0$ and $||f||_{d,2B}=(\mint_{2B}f^d)^{\frac{1}{d}}$.
\end{lem}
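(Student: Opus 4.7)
The plan is to follow the Calder\'on--Zygmund and level-set iteration of Bildhauer--Fuchs--Zhong \cite{bildhauer05}, with the crucial modification that the ``weight'' $g$ is no longer bounded or $A_\infty$ but only exponentially integrable. The payoff is a logarithmic (rather than polynomial) gain in the integrability of $f$, with the log-power $c_0\beta$ determined by how the exponential weight pairs with its Orlicz dual via Young--Fenchel.

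Fix $B_0=B_r(x_0)$ with $B_{2r}(x_0)\subset\subset\Omega$ and write $F=\|f\|_{d,2B_0}$. For $\lambda > \sigma F$ with a suitably large $\sigma$, I perform the standard stopping-time decomposition on $B_0$ relative to the density $\mint_B f^d$, obtaining disjoint balls $\{B_i\}$ with $\lambda^d < \mint_{B_i} f^d$ and $\mint_{2B_i} f^d \leq c_n\lambda^d$, covering $\{f>\lambda\}\cap B_0$ up to null sets. Applying \eqref{bfz1} on each $2B_i$ and splitting $g = g\chi_{\{g\le\delta\}}+g\chi_{\{g>\delta\}}$, the bounded piece contributes at most $\delta\lambda$, which together with the $\eps$-term is absorbed on the left once $\eps$ and $\delta$ are small. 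Summing in $i$ yields a distribution-function inequality of the schematic form
\begin{equation*}
\int_{\{f>\lambda\}\cap B_0} f^d \le A \int_{\{f>\lambda/A\}\cap 2B_0} f^d\, g\,\chi_{\{g>\delta\}} + A\int_{\{h>\lambda/A\}\cap 2B_0} h^d,
\end{equation*}
valid for all $\lambda>\sigma F$, with $A=A(n,d,C)$.

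Next I multiply by $\lambda^{-1}\log^{c_0\beta-1}(e+\lambda/F)$ and integrate $d\lambda$ over $(\sigma F,\infty)$. The layer-cake formula converts both sides into weighted $L^d\log^{c_0\beta}$ norms, producing
\begin{equation*}
\int_{B_0} f^d \log^{c_0\beta}\!\bigl(e+\tfrac{f}{F}\bigr) \le A'\!\int_{2B_0}\! f^d g\,\log^{c_0\beta}\!\bigl(e+\tfrac{f}{F}\bigr) + A'\!\int_{2B_0}\! h^d\log^{c_0\beta}\!\bigl(e+\tfrac{f}{F}\bigr) + A'F^d|B_0|.
\end{equation*}
The decisive step is to process the cross term using only exponential integrability of $g$. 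I invoke Young--Fenchel for the complementary Orlicz pair $\Psi(t)=e^{\beta t^d}-1$ and $\Psi^*(s)\sim s\bigl(\log(e+s)\bigr)^{1/d}$: after choosing the dilation parameter to match the logarithmic weight, this gives
\begin{equation*}
f^d g \,\log^{c_0\beta}\!\bigl(e+\tfrac{f}{F}\bigr) \le \eta\, f^d \log^{c_0\beta}\!\bigl(e+\tfrac{f}{F}\bigr) + C_\eta\, f^d\,\bigl(e^{\beta g^d}-1\bigr)
\end{equation*}
for arbitrary $\eta>0$, provided $c_0$ is chosen so that the log-power produced by $\Psi^*$ is strictly less than $c_0\beta$; this is what fixes $c_0=c_0(n,d,C)$. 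The first term is absorbed on the left; the second, by H\"older's inequality, is bounded by $\bigl(\mint_{2B_0}e^{\beta g^d}\bigr)\bigl(\mint_{2B_0}f^d\bigr)|2B_0|$, which is precisely the first term on the right of \eqref{bfz2}.

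To make the iteration rigorous with only $f\in L^d$ a priori, I truncate to $f_N=\min(f,N)$, observe that $f_N$ still satisfies \eqref{bfz1} with the same $\eps$ and $C$ after adjusting $h$ by an additive $N\chi_\Omega$, derive the above estimates with constants independent of $N$, and pass $N\to\infty$ by monotone convergence. The main obstacle is the Orlicz--Young matching in the previous paragraph: identifying the correct constant $c_0=c_0(n,d,C)$ so that the logarithmic power generated on the right by the dual of $e^{\beta g^d}$ is strictly dominated by the log-power on the left, thereby permitting absorption. This is the one point where the argument genuinely deviates from \cite{bildhauer05}, where $g$ is controlled by an $A_\infty$-type condition and the gain is polynomial; the exponential integrability hypothesis is precisely what forces the $\log^{c_0\beta}$ scale appearing in \eqref{bfz2}.
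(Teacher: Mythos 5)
Your overall route is genuinely different from the paper's: you run a direct Calder\'on--Zygmund/good-$\lambda$ decomposition on $B_0$ and then a layer-cake integration, whereas the paper (following Bildhauer--Fuchs--Zhong) normalizes $\int_{2B}f^d=1$, multiplies $f$ and $h$ by the distance weight $d(x)^{n/d}$ so that the reverse-H\"older inequality holds for \emph{all} balls of $\R^n$, passes to the Hardy--Littlewood maximal functions of $\tilde f^d$, $\tilde f g$, $\tilde h^d$, absorbs the $\eps$-term at that level, and then invokes the distribution-function machinery of \cite{bildhauer05} on the maximal inequality. Both are legitimate descendants of Gehring's argument, so the change of route by itself is fine.

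However, the crux of your proposal --- the cross-term estimate --- does not hold. The claimed pointwise Young--Fenchel inequality
\begin{equation*}
f^d\,g\,\log^{c_0\beta}\!\bigl(e+\tfrac{f}{F}\bigr)\ \le\ \eta\, f^d\,\log^{c_0\beta}\!\bigl(e+\tfrac{f}{F}\bigr)\ +\ C_\eta\, f^d\,\bigl(e^{\beta g^d}-1\bigr)
\end{equation*}
is false: dividing by $f^d$ and writing $L=\log^{c_0\beta}(e+f/F)$, it reads $gL\le \eta L+C_\eta(e^{\beta g^d}-1)$, which for any fixed $g>\eta$ fails as $L\to\infty$ since the right-hand side depends on $g$ only. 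What Young--Fenchel for the pair $\Psi(t)=e^{\beta t^d}-1$, $\Psi^*(s)\sim s\log^{1/d}(e+s)$ actually yields is an extra unbounded factor of order $\bigl(\log\log(e+f/F)\bigr)^{1/d}$ on the $f$-term, and that cannot be absorbed by choosing $\eta$ small; moreover the log power produced by $\Psi^*$ is $1/d$, independent of $\beta$, so for $\beta$ small it exceeds $c_0\beta$ and no admissible $c_0=c_0(n,d,C)$ can force the absorption you describe. A second problem occurs in the final step: the bound $\int_{2B_0}f^d(e^{\beta g^d}-1)\le(\mint_{2B_0}e^{\beta g^d})(\mint_{2B_0}f^d)\,|2B_0|$ is not H\"older's inequality --- it is a reverse-Chebyshev estimate that fails in general (take $f^d$ and $e^{\beta g^d}$ both unbounded and positively correlated). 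The mechanism that actually produces the $\log^{c_0\beta}$-gain and the factor $\mint e^{\beta g^d}$ in \eqref{bfz2} is not a single pointwise Orlicz split; in \cite{bildhauer05} the exponential integrability of $g$ is exploited through a level-dependent truncation of $g$ inside a good-$\lambda$-type inequality for the maximal functions, where the threshold $\delta=\delta(\lambda)$ is chosen to grow with $\lambda$ so that $|\{g>\delta\}|\lesssim e^{-\beta\delta^d}$ supplies the logarithmic improvement upon iteration. Your sketch uses a fixed small $\delta$ only to absorb the bounded piece, and thus never actually invokes the exponential decay of $|\{g>\delta\}|$; this is the missing ingredient.
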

\begin{proof}
The proof is very similar to the one of Lemma $1.2$ in \cite{bildhauer05} and therefore we only comment on the differences.

We define $B_0=2B$ and we assume without loss of generality that
\[
\int_{B_0}f^d=1.
\]
Next we define the functions $d(x)=\text{dist}(x,\R^n\backslash B_0)$ and
\begin{align*}
\tilde{f}(x)=&d(x)^{\frac{n}{d}}f,\\
\tilde{h}(x)=&d(x)^{\frac{n}{d}}h \ \ \ \text{and}\\
w(x)=& \chi_{B_0}(x),
\end{align*}
where $\chi_{B_0}$ is the characteristic function of $B_0$. As in \cite{bildhauer05} it is now easy to see that because of \eqref{bfz1} these new functions satisfy
\begin{align*}
(\mint_{B} \tilde{f}^d)^{\frac{1}{d}}\le& C\mint_{2B} \tilde{f}g +C\eps(\mint_{2B} \tilde{f}^d)^{\frac{1}{d}} +C (\mint_{2B} \tilde{h}^d)^{\frac1d}\\
&+C(\mint_{2B} w)^{\frac1d}
\end{align*}
and now this inequality is true for all balls $B \subset \R^n$. Hence, by taking the supremum over all radii, we get (here $M(f)$ denotes the maximal function of $f$)
\begin{align*}
M(\tilde{f}^d)^{\frac{1}{d}}\le& CM(\tilde{f}g)+C\eps M(\tilde{f}^d)^{\frac1d}+C M(\tilde{h}^d)^{\frac1d}+CM(w)^{\frac1d}.
\end{align*}
For $\eps_0$ small enough we therefore have
\begin{align*}
M(\tilde{f}^d)^{\frac{1}{d}}\le& CM(\tilde{f}g)+C M(\tilde{h}^d)^{\frac1d}+CM(w)^{\frac1d}
\end{align*}
and with the help of this inequality we can copy the rest of the argument of the proof of Lemma $1.2$ in \cite{bildhauer05} to finish the proof.
\end{proof}

Now we want to apply this Lemma to our estimate \eqref{bfza}. From the previous subsection we know that 
\begin{align*}
f^d=& V^{p-2}|D^3 u|^2\in L^1_{loc}(B_{\frac{r_0}{16}}) \ \ \ \text{and}\\
h^d=& V^{2p} \in L^1_{loc}(B_{\frac{r_0}{16}}).
\end{align*}
Hence it remains to check that
\begin{align}
e^{\beta g^d}=e^{\beta V^{p-2}} \in L^1_{loc}(B_{\frac{r_0}{16}}) \label{gexp}
\end{align}
for some constant $\beta>0$. We actually claim that this is true for all $\beta>0$. In order to see this we note that by \eqref{w32} we have that
\begin{align*}
\int_{B_{\frac{r_0}{8}}}|D (V^{\frac{p}{2}})|^2 \le c_1(r_0).
\end{align*}
Next we let $\eta\in C^\infty_c(B_{\frac{r_0}{8}})$ be a cut-off function such that $0\le \eta\le 1$, $\eta (x)\equiv 1$ for all $x\in B_{\frac{r_0}{16}}$ and $||D \eta||_{L^\infty(B_{\frac{r_0}{8}})}\le cr_0^{-1}$. Defining $v=\eta V^{\frac{p}{2}}$ we get that
\[
\int_{B_{\frac{r_0}{8}}}|D v|^2\le cr_0^{-2}\int_{B_{\frac{r_0}{8}}}V^p+c\int_{B_{\frac{r_0}{8}}}|D V^{\frac{p}{2}}|^2 \le c_2(r_0).
\]
Hence we see that $u=\frac{v}{\sqrt{c_2(r_0)}}\in H^1_0(B_{\frac{r_0}{8}})$ and
\[
 \int_{B_{\frac{r_0}{8}}}|D u|^2\le 1.
\]
Therefore, by the Moser-Trudinger inequality (see \cite{trudinger67}), there exist constants $\beta_0>0$ and $C=C(r_0)>0$ such that
\[
\int_{B_{\frac{r_0}{16}}} e^{\beta_0 V^p}\le \int_{B_{\frac{r_0}{8}}} e^{c_2(r_0)\beta_0 u^2} \le C.
\]
In particular this implies with the help of Young's inequality that for every $\beta>0$ 
\begin{align*}
\int_{B_{\frac{r_0}{16}}} e^{\beta V^{p-2}}\le c(\beta,\beta_0)\int_{B_{\frac{r_0}{16}}} e^{\beta_0 V^p}\le C(r_0,\beta,\beta_0).
\end{align*}
Since we also have that 
\[
h^d\text{log}^\alpha (e+h)=V^{2p}\text{log}^\alpha (e+V^{\frac{4p}{3}})\in L^1_{loc}(B_{\frac{r_0}{16}})
\]
for every $\alpha>0$ we get from Lemma \ref{bfz} that 
\[
f^d\text{log}^\alpha (e+f)\in L^1_{loc}(B_{\frac{r_0}{16}})
\]
for every $\alpha>0$. Hence we obtain that
\[
|D^3 u|^2 \text{log}^\alpha (e+|D^3 u|)\in L^1_{loc}(B_{\frac{r_0}{16}})
\]
for every $\alpha>0$. In particular this is true for $\alpha>1$ and therefore we can apply Corollary $4.6$ and Example $4.18(iv)$ of \cite{edmunds97} (see also Example $5.3$ in \cite{kauhanen99} for a different proof of this result) in order to conclude that 
\begin{align*}
u\in C^2(B_{\frac{r_0}{32}}).
\end{align*}
In particular this implies that 
\begin{align}
\int_{B_r} V^p \le cr^2\label{vr2}
\end{align}
for all $r\le \frac{r_0}{32}$. 

In order to show the H\"older continuity of $D^2 u$ we go back to \eqref{important}
and we estimate the last term with the help Lemma \ref{mor} and \eqref{vr2} by
\begin{align*}
\int I_{p-2,h} \Big(&|D(u_h-l_h)|^2 \frac{\chi_{A_r}}{r^2}  + |u_h-l_h|^2 \frac{\chi_{A_{r}}}{r^4}\Big) \\
\le& C \int_{A_r} |D^2u_h|^2.
\end{align*}
Inserting this estimate into \eqref{important}, letting $h\rightarrow 0$ and using \eqref{vr2} we conclude for every $r\le \frac{r_0}{32}$
\begin{align}
\int_{B_r} |D^3 u|^2 \le C \int_{A_r} |D^3u|^2+Cr^\beta.\label{hole}
\end{align}
\begin{rem}
This estimate is sufficient for our purposes but by repeating all the estimates from subsection \ref{diffquo} and replacing every application of Lemma \ref{morrey} by \eqref{vr2} one can actually improve this inequality in the sense that the term $r^\beta$ on the right hand side can be replaced by $r^2$. 
\end{rem}
The H\"older continuity of $D^2 u$ now follows from \eqref{hole} by another hole-filling argument. This finishes the proof of Proposition \ref{generalsys}.

\section{Compactness results and existence of minimizers}  
\setcounter{equation}{0}
\subsection{Compactness results}
We start by quoting the fundamental compactness theorem of J. Langer (see also \cite{breuning11}). 
\begin{thm}\cite{langer85}\label{p0}
Let $\Sigma$ be a closed surface and $p > 2$. Assume that $f_k \in W^{2,p}_{{\rm im}}(\Sigma,\R^n)$
is a sequence satisfying $0\in f_k(\Sigma)$ for all $k\in \N$ and
\begin{equation}
\label{langer0}
\mathcal{E}^p(f_k) \le C.
\end{equation}
After replacing $f_k$ by $f_k \circ \varphi_k$ for suitable diffeomorphisms 
$\varphi_k \in C^\infty(\Sigma,\Sigma)$ and passing to a subsequence, the 
$f_k$ converge weakly in $W^{2,p}(\Sigma,\R^n)$ to an $f \in W^{2,p}_{{\rm im}}(\Sigma,\R^n)$. 
In particular, the convergence is in $C^{1,\beta}(\Sigma,\R^n)$ for any 
$\beta < 1- \frac{2}{p}$, and
\begin{equation}
\label{langer1}
\mathcal{E}^p(f) \le \liminf_{k \to \infty} \mathcal{E}^p(f_k).
\end{equation}
\end{thm}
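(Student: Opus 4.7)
The plan is to reduce Theorem \ref{p0} to a Banach--Alaoglu argument after rewriting each $f_k$ as a collection of uniformly controlled $W^{2,p}$-graphs over a common smooth model surface. First, the bound $\mathcal{E}^p(f_k)\le C$ immediately gives $\mu_{g_k}(\Sigma)\le 4C$ and $\int_\Sigma |A_k|^p\,d\mu_{g_k}\le 4C$; combined with the Michael--Simon Sobolev inequality and the condition $0\in f_k(\Sigma)$, this gives a uniform diameter bound, so all images $f_k(\Sigma)$ lie in a fixed ball $B_R\subset\R^n$.

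Next I would establish a uniform local graph decomposition: there exists $\rho=\rho(p,C)>0$ such that for every $k$ and every $x\in\Sigma$, the connected component of $f_k^{-1}(B_\rho(f_k(x)))$ through $x$ is a graph of a function $u_{k,x}\in W^{2,p}$ over the affine tangent plane $T_x f_k+f_k(x)$, with $\|Du_{k,x}\|_{L^\infty}\le 1/2$ and $\|D^2 u_{k,x}\|_{L^p}$ bounded uniformly in $k,x$. The mechanism is that, by absolute continuity of the $L^p$-integral of $|A_k|$, the normal cannot turn too much on a small intrinsic ball where $\int|A_k|^p$ is small; and the uniform Morrey decay of $\int|A_k|^p$ over such small balls follows from $p>2$ together with the $L^p$-bound. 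The $W^{2,p}$-control of the graph function then comes from the elliptic graph formulation of the mean curvature system (cf.\ \eqref{hsystem}) combined with the $C^{1,1-2/p}$-embedding.

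Finally, covering $f_k(\Sigma)$ by balls of radius $\rho/2$ and using the uniform area bound, one obtains a covering by at most $N=N(C,\rho,R)$ such graph neighborhoods, independent of $k$. After passing to a subsequence I would stabilize the combinatorial type of this cover (both the number of patches and their intersection pattern), and let the base points and tangent planes converge. This produces a fixed $C^\infty$ reference surface $\Sigma_0$ (of genus determined by the stabilized cover; diffeomorphic to $\Sigma$ by the classification of closed surfaces, since the genus is a $C^{1,\beta}$-invariant) together with diffeomorphisms $\varphi_k:\Sigma_0\to\Sigma$ glued from the local graph charts via a fixed partition of unity. In the resulting coordinates the maps $f_k\circ\varphi_k$ have uniformly bounded $W^{2,p}$ norm, so Banach--Alaoglu plus Rellich give weak $W^{2,p}$- and strong $C^{1,\beta}$-subconvergence to some $f\in W^{2,p}(\Sigma,\R^n)$; the $C^{1,\beta}$ convergence of $f_k\circ\varphi_k$ ensures $g_k\to g$ in $C^{0,\beta}$ and hence the limit remains an immersion, while lower semicontinuity of $\mathcal{E}^p$ under weak $W^{2,p}$ convergence yields \eqref{langer1}. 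The main obstacle is the third step: converting the pointwise graph decompositions into globally consistent diffeomorphisms $\varphi_k$ with uniformly controlled transition maps, so that the $W^{2,p}$-bounds add up globally rather than locally, and ensuring that after reparametrization the topological type of $\Sigma$ is preserved.
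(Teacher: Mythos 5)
The paper does not prove Theorem~\ref{p0} at all; it is quoted verbatim from Langer \cite{langer85} (with Breuning \cite{breuning11} as an additional reference), so there is no internal proof to compare your sketch against. With that caveat, your outline does follow the general architecture of Langer's argument: uniform area and second-fundamental-form bounds, a uniform lower bound on the graph radius over tangent planes, a covering with stabilized combinatorics, and then a gluing/weak-compactness step. You also correctly single out the gluing of local charts into globally consistent diffeomorphisms as the hard part.

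There is, however, a genuine misstep in your second step. You assert that ``the uniform Morrey decay of $\int|A_k|^p$ over such small balls follows from $p>2$ together with the $L^p$-bound.'' An $L^p$ bound on $|A_k|$ gives $\{|A_k|^p\}$ bounded in $L^1$, which yields neither Morrey decay of $\int_{B_r}|A_k|^p$ nor uniform (in $k$) absolute continuity of this family; bounded $L^1$ sequences can concentrate. What one actually has, and what suffices, is decay of a \emph{lower} power. By Simon's monotonicity formula the area ratios satisfy $\mu_{g_k}(\Sigma\cap B_r)\le C r^2$ with $C$ controlled by the Willmore energy, and the Willmore energy is bounded since $\int|A_k|^2\le(\int|A_k|^p)^{2/p}\mu_{g_k}(\Sigma)^{1-2/p}\le C$. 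H\"older then gives
$$
\int_{\Sigma\cap B_r}|A_k|^2\ \le\ \Big(\int_{\Sigma\cap B_r}|A_k|^p\Big)^{2/p}\,\mu_{g_k}(\Sigma\cap B_r)^{1-2/p}\ \le\ C\,r^{\,2-\frac{4}{p}},
$$
a genuine Morrey decay, uniform in $k$ and in the center, precisely because $p>2$. This $L^2$ smallness on small balls (equivalently, the Sobolev embedding $W^{1,p}\hookrightarrow C^{0,1-2/p}$ applied to the Gauss map) is what drives the uniform graph representation, not smallness of $\int|A_k|^p$. With that exponent corrected, the rest of your plan is in the right spirit, though of course the technical content of converting the local graph estimates into a uniform lower bound on the graph radius, and then into global diffeomorphisms with controlled transition maps, is exactly what takes up the bulk of Langer's paper.
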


In this section we prove Theorem \ref{comp}, which replaces the 
$\mathcal{E}^p$ bound in Langer's theorem by a bound only for 
$\mathcal{W}^p$, under the additional assumption that the Willmore 
energy is bounded below $8\pi$. Before entering the proof we include 
two remarks about the statement. 

\begin{rem}\label{genusbound}
One can allow sequences $f_k:\Sigma_k \to \R^n$ in Theorem \ref{comp}, where 
$\Sigma_k$ are arbitrary closed oriented surfaces. In fact, a bound on the genus 
follows from the condition $\liminf_{k\to \infty} \mathcal{W}(f_k) < 8\pi$
by a result of Kuwert, Li \& Sch\"atzle \cite{KLS09}.
\end{rem} 

\begin{rem}\label{optimal}
Connecting two round spheres by a shrinking catenoid neck yields a sequence of smoothly 
embedded surfaces with bounded $\mathcal{W}^p$-energy and Willmore energy less than $8\pi$. 
As the convergence is not in $C^1$, this shows that the assumption on the Willmore
energy in Theorem \ref{comp} cannot be weakened. Similar constructions are also 
possible for higher genus, see K\"uhnel \& Pinkall \cite{kuehnel86} and Simon
\cite{simon93}.
\end{rem}

To prove Theorem \ref{comp} we need the following area ratio bounds, which 
are immediate consequences of Simon's monotonicity identity \cite{simon93}.

\begin{lem}\label{sim}
Let $f:\Sigma \rightarrow \R^n$ be an embedded closed surface. Then
\begin{align}
\sigma^{-2}|\Sigma\cap B_\sigma|\leq C\,\mathcal{W}(f) \quad \mbox{ for all } \sigma > 0.\label{density}
\end{align}
Moreover for any $p > 2$ we have the estimate 
\begin{equation}\label{es}
\sigma^{-2}|\Sigma \cap B_\sigma|\leq \frac{1}{4} \mathcal{W}(f)+C\sigma^{\frac{p-2}{p}}
\quad \mbox{ for all } \sigma > 0,
\end{equation}
where the constant $C$ depends on $\mathcal{W}^p(f)$.
\end{lem}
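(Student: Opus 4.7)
The plan is to derive both estimates from a single starting inequality, namely Simon's monotonicity identity, which for a closed $C^2$ surface $\Sigma \subset \R^n$ and any $x_0 \in \R^n$, $\sigma > 0$ can be written in the form
\begin{equation*}
\frac{|\Sigma \cap B_\sigma(x_0)|}{\sigma^2} + \int_{\Sigma \cap B_\sigma(x_0)} \frac{|(x-x_0)^\perp|^2}{|x-x_0|^4}\,d\mu_g = \frac{1}{16} \int_{\Sigma \cap B_\sigma(x_0)} |H|^2\,d\mu_g - \frac{1}{2\sigma^2}\int_{\Sigma \cap B_\sigma(x_0)} \langle x-x_0, H\rangle\,d\mu_g.
\end{equation*}
Dropping the nonnegative term on the left and recognizing $\tfrac{1}{16}\int|H|^2 = \tfrac{1}{4}\mathcal{W}(f)$, we obtain
\begin{equation*}
\frac{|\Sigma \cap B_\sigma(x_0)|}{\sigma^2} \leq \frac{1}{4}\mathcal{W}(f) + \frac{1}{2\sigma}\int_{\Sigma \cap B_\sigma(x_0)} |H|\,d\mu_g,
\end{equation*}
using $|x-x_0| \leq \sigma$ on $B_\sigma(x_0)$. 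This is the master inequality; both statements will follow by estimating the last term in two different ways.

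For \eqref{density}, I would apply Cauchy--Schwarz:
\begin{equation*}
\frac{1}{2\sigma}\int_{\Sigma \cap B_\sigma} |H|\,d\mu_g \leq \frac{1}{2\sigma}\Big(\int_\Sigma |H|^2\,d\mu_g\Big)^{1/2} |\Sigma \cap B_\sigma|^{1/2} = \mathcal{W}(f)^{1/2}\Big(\frac{|\Sigma \cap B_\sigma|}{\sigma^2}\Big)^{1/2}.
\end{equation*}
Setting $y = \sigma^{-2}|\Sigma\cap B_\sigma|$ and $W = \mathcal{W}(f)$, the master inequality reads $y \leq \tfrac{1}{4}W + W^{1/2} y^{1/2}$. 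A standard application of Young's inequality absorbs the $y^{1/2}$ term on the left and yields $y \leq C W$, which is precisely \eqref{density}.

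For the sharper bound \eqref{es}, I would instead use H\"older's inequality with exponent $p$:
\begin{equation*}
\int_{\Sigma \cap B_\sigma} |H|\,d\mu_g \leq \Big(\int_\Sigma |H|^p\,d\mu_g\Big)^{1/p} |\Sigma \cap B_\sigma|^{(p-1)/p} \leq C(\mathcal{W}^p(f))\, |\Sigma \cap B_\sigma|^{(p-1)/p}.
\end{equation*}
Now I can feed in the bound just established, $|\Sigma \cap B_\sigma| \leq C\mathcal{W}(f)\sigma^2$, to obtain
\begin{equation*}
\frac{1}{2\sigma} \int_{\Sigma\cap B_\sigma} |H|\, d\mu_g \leq \frac{C}{\sigma}\, \sigma^{2(p-1)/p} = C\sigma^{(p-2)/p},
\end{equation*}
where $C$ depends on $\mathcal{W}^p(f)$. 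Inserting this into the master inequality gives \eqref{es}.

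The whole argument is routine once Simon's identity is on the table, so there is no real obstacle; the only mild point of attention is checking the exponent arithmetic in the H\"older step and making sure the first estimate is invoked before the second, since it supplies the upper bound on $|\Sigma\cap B_\sigma|$ that drives the error term to $\sigma^{(p-2)/p}$.
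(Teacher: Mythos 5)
Your proof is correct and is exactly the paper's argument: derive the master inequality $\sigma^{-2}|\Sigma\cap B_\sigma| \le \tfrac{1}{4}\mathcal{W}(f) + \tfrac{1}{2\sigma}\int_{\Sigma\cap B_\sigma}|H|\,d\mu$ from Simon's monotonicity identity (the paper's \eqref{Simmon}) and then apply H\"older's inequality, which is all the paper says. Your filling-in of that last step is the right one: Cauchy--Schwarz plus absorption for \eqref{density}, and the $L^p$--$L^{p'}$ H\"older estimate with \eqref{density} fed back in so as to retain the sharp coefficient $\tfrac14$ for \eqref{es}; the only detail left implicit is that $\mathcal{W}(f)\le\mathcal{W}^p(f)$ (since $|H|^2\le(1+|H|^2)^{p/2}$ for $p\ge 2$), which ensures the constant in \eqref{es} really depends only on $\mathcal{W}^p(f)$ as stated.
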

\begin{proof}
By equation $(1.2)$ in \cite{simon93}, we have for $0 < \sigma < \infty$ the inequality 
\begin{eqnarray*}
\sigma^{-2}|\Sigma \cap B_\sigma| & \leq & \rho^{-2}|\Sigma \cap B_\rho| + \frac{1}{4} \mathcal{W}(f)\\
&& + \frac{1}{2}\int_{\Sigma \cap B_\rho} \rho^{-2} \langle x,H \rangle\,d\mu 
   - \frac{1}{2} \int_{\Sigma \cap B_\sigma} \sigma^{-2} \langle x,H \rangle\,d\mu.
\end{eqnarray*}
Letting $\rho\rightarrow \infty$ we conclude for every $\sigma > 0$
\begin{equation}\label{Simmon}
\sigma^{-2}|\Sigma \cap B_\sigma| \leq \frac{1}{4}\mathcal{W}(f) 
+ \frac{1}{2\sigma}\int_{\Sigma \cap B_\sigma} |H|d\mu.
\end{equation}
The estimates now follow from an application of H\"older's inequality.
\end{proof}

As second ingredient, we need the following lemma yielding an $L^p$ estimate 
for the prescribed mean curvature system.

\begin{lem}\label{2.44}
Let $u\in W^{2,p}(B_\varrho,\mathbb{R}^k)$, where $B_\varrho = \{x \in \mathbb{R}^2: |x| < \varrho\}$
and $0<\varrho<\infty$, $p \in (1,\infty)$, be a solution of the system
\begin{align*}
a^{\alpha \beta}_{ij} \partial^2_{\alpha \beta} u^i = \varphi_j \quad
\mbox{ for }j = 1,\ldots,k.
\end{align*}
There is an $\eps_0 = \eps_0(p) > 0$ such that if
$$
|a^{\alpha \beta}_{ij}(x)-\delta^{\alpha \beta} \delta_{ij}| \le \eps_0  \quad \mbox{ for all }x \in B_\varrho,
$$
then for some $C = C(p) < \infty$ we have the estimate
\begin{align*}
\|D^2 u\|_{L^p(B_\frac{\varrho}{2})} \leq C\big(\|\varphi\|_{L^p(B_\varrho)} + \varrho^{-1} \|Du\|_{L^p(B_\varrho)}\big).
\end{align*}
\end{lem}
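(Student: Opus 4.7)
The plan is to treat the system as a small perturbation of the componentwise Laplacian. Rewriting the equation as
\begin{equation*}
\Delta u^j = \varphi_j - \big(a^{\alpha\beta}_{ij} - \delta^{\alpha\beta}\delta_{ij}\big)\partial^2_{\alpha\beta} u^i =: F^j,
\end{equation*}
the classical interior Calder\'on--Zygmund estimate for the Laplacian, applied componentwise on concentric balls $B_t \subset B_s \subset B_\varrho$, yields
\begin{equation*}
\|D^2 u\|_{L^p(B_t)} \leq C_p\Big(\|F\|_{L^p(B_s)} + (s-t)^{-1}\|Du\|_{L^p(B_s)}\Big).
\end{equation*}

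Invoking the smallness hypothesis $|a^{\alpha\beta}_{ij} - \delta^{\alpha\beta}\delta_{ij}| \leq \eps_0$ to bound $F$, this becomes
\begin{equation*}
\|D^2 u\|_{L^p(B_t)} \leq C_p \eps_0\, \|D^2 u\|_{L^p(B_s)} + C_p\|\varphi\|_{L^p(B_\varrho)} + C_p(s-t)^{-1}\|Du\|_{L^p(B_\varrho)}.
\end{equation*}
I then fix $\eps_0$ so that $C_p \eps_0 \leq 1/2$ and apply a standard iteration lemma (e.g.\ Lemma~6.1 in Giaquinta's \emph{Multiple Integrals in the Calculus of Variations}), which asserts that if $\phi(t) \leq \theta\, \phi(s) + A + B(s-t)^{-1}$ for all $\varrho/2 \leq t < s \leq \varrho$ with $\theta < 1$, then $\phi(\varrho/2) \leq c(\theta)(A + B\varrho^{-1})$. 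Taking $\phi(r) = \|D^2 u\|_{L^p(B_r)}$ produces the claimed bound
\begin{equation*}
\|D^2 u\|_{L^p(B_{\varrho/2})} \leq C\big(\|\varphi\|_{L^p(B_\varrho)} + \varrho^{-1}\|Du\|_{L^p(B_\varrho)}\big).
\end{equation*}

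The only nontrivial input is the form of the interior $L^p$ Laplacian estimate used above, with $Du$ rather than $u$ on the right-hand side; this is standard and follows either from the classical version by a cutoff and interpolation argument, or from a Caccioppoli-type bound combined with Calder\'on--Zygmund theory, and it requires no smallness. The role of the hypothesis on $\eps_0$ is solely to make the perturbation term absorbable via the iteration lemma. The main obstacle, such as it is, lies in bookkeeping the nested-ball estimates so that the constants remain independent of $u$ and $\varrho$; conceptually the argument is pure perturbation theory.
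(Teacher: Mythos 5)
Your proposal is correct, but it takes a genuinely different route from the paper's proof. You rewrite $\Delta u^j = \varphi_j - (a^{\alpha\beta}_{ij}-\delta^{\alpha\beta}\delta_{ij})\partial^2_{\alpha\beta}u^i$ and apply the \emph{interior} Calder\'on--Zygmund estimate on nested balls $B_t\subset B_s$; this puts the perturbation term $C_p\eps_0\|D^2u\|_{L^p(B_s)}$ on a strictly larger ball than the left-hand side, so direct absorption is impossible and you need the Giaquinta-type iteration lemma $\phi(t)\le\theta\phi(s)+A(s-t)^{-\alpha}+B$. The paper instead multiplies by a cutoff \emph{before} applying elliptic estimates: for $\eta u\in W^{2,p}_0(B_1)$ one has $\Delta(\eta u^j)=(\delta^{\alpha\beta}\delta_{ij}-a^{\alpha\beta}_{ij})\partial^2_{\alpha\beta}(\eta u^i)+\eta\varphi^j+(\text{lower order})$, and the global boundary-value $L^p$ estimate then gives $\|D^2(\eta u)\|_{L^p(B_1)}\le C\eps_0\|D^2(\eta u)\|_{L^p(B_1)}+C(\|\varphi\|_{L^p}+\|Du\|_{L^p})$ where the perturbation is measured on the \emph{same} quantity $\eta u$ and on the \emph{same} ball, so absorption is immediate with no iteration needed. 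Both routes are standard; the paper's cutoff-first argument is slightly more economical since it avoids invoking the iteration lemma, while your nested-ball argument is perhaps the more mechanical default. One minor bookkeeping point in your version: the interior Laplacian estimate as usually stated carries a term $(s-t)^{-2}\|u\|_{L^p(B_s)}$ rather than $(s-t)^{-1}\|Du\|_{L^p(B_s)}$; you should either subtract the mean and apply Poincar\'e (picking up a factor $s\le\varrho$, which the iteration lemma still handles since it tolerates mixed powers of $(s-t)^{-1}$), or note explicitly that the $Du$-version follows from a cutoff computation -- you gesture at this at the end, and it is indeed standard, but it deserves a word.
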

\begin{proof} We may assume that $\varrho=1$ and that $u$ has mean value zero on $B_1$. For
$\eta\in C^\infty_0(B_1)$ satisfying $\eta = 1$ on $B_\frac{1}{2}$
and $\eta = 0$ in $B_1\setminus B_{\frac{3}{4}}$, we calculate
\begin{align*}
  a^{\alpha \beta}_{ij} \partial^2_{\alpha \beta}(\eta u^i) = &
\eta \varphi^j + a^{\alpha \beta}_{ij} (\partial_{\alpha \beta}^2 \eta) u^i
+ a^{\alpha \beta}_{ij}( \partial_\alpha \eta\, \partial_\beta u^i + \partial_\beta \eta\, \partial_\alpha u^i).
\end{align*}
Hence we have
\begin{align*}
\Delta(\eta u^j)=&
(\delta^{\alpha \beta} \delta_{ij} - a^{\alpha \beta}_{ij}) \partial^2_{\alpha \beta }(\eta u^i) + \eta \varphi^j\\
& + a^{\alpha \beta}_{ij} (\partial_{\alpha \beta}^2 \eta) u^i 
+ a^{\alpha \beta}_{ij}( \partial_\alpha \eta\, \partial_\beta u^i + \partial_\beta \eta\, \partial_\alpha u^i).
\end{align*}
>From standard $L^p$-estimates and the Poincar\'{e} inequality we obtain
$$
\|D^2 (\eta u)\|_{L^p(B_1)} \leq C \eps_0 \|D^2(\eta u)\|_{L^p(B_1)}
          + C\big(\|\varphi\|_{L^p(B_1)} +\|Du\|_{L^p(B_1)}\big),
$$
for a constant $C = C(p) < \infty$. This shows the desired result.
\end{proof}

%Now we are able to give the proof of Theorem \ref{comp}.

\begin{proof}[Proof of Theorem \ref{comp}]
Let $f_k:\Sigma \rightarrow \R^n$ be a sequence as in the theorem. 
For each $q\in \Sigma$, we let $r_k(q) > 0$ be the maximal radius on which 
$f_k$ is represented as a graph over the tangent plane at $q$. We denote by
$u_{k,q}:B_{r_k(q)} \rightarrow \R^{n-2}$ the corresponding graph function, 
obtained by choosing a suitable rigid motion. In particular
$$
u_{k,q}(0) = 0 \quad \mbox{ and } \quad Du_{k,q}(0) = 0.
$$
For $\eps > 0$ we define 
$r_k(q,\varepsilon) = \sup\{r \in (0,r_k(q)]: \|D u_{k,q}\|_{C^0(B_r)}< \varepsilon\}$
and
\begin{align*}
r_k = \inf_{q \in \Sigma} r_k(q,\varepsilon). 
\end{align*}
By compactness, the infimum is attained at some point $q_k\in \Sigma$ 
and we have $r_k > 0$. We will show by contradiction that 
\begin{align}
\liminf\limits_{k\rightarrow \infty} r_k>0. \label{radius}
\end{align}
Assuming that $r_k \rightarrow 0$ we rescale by putting 
$$
\tilde{f}_k:\Sigma \to \R^n,\,\tilde{f}_k(p) = \frac{1}{r_k} \big(f_k(p)-f_k(q_k)\big).
$$
Clearly, the $\tilde{f}_k$ have local graph representations
\begin{align*}
\tilde{u}_{k,q}:B_{r_k(q)/r_k} \to \R^{n-2},\, \tilde{u}_{k,q}(x) = \frac{1}{r_k} u_{k,q}(r_k x),
\end{align*}
where $\tilde{u}_{k,q}(0) = 0$ and $D\tilde{u}_{k,q}(0) = 0$, and 
$$
\|\tilde{u}_{k,q}\|_{C^0(B_1)} + \|D \tilde{u}_{k,q}\|_{C^0(B_1)} \leq C \varepsilon.
$$
>From the bound $\mathcal{W}^p(f_k)\le C$ we further infer that 
$$
\int_\Sigma |H_{\tilde{f}_k}|^{p}\,d\mu_{\tilde{f}_k} =
r_k^{p-2}\,\int_\Sigma |H_{f_k}|^p\,d\mu_{f_k} \le C\,r_k^{p-2} \to 0.
$$
The prescribed mean curvature system \eqref{hsystem} for the $u_k$ fulfills 
the assumption of Lemma \ref{2.44}, if $\eps = \eps(p) > 0$ is
sufficiently small. Therefore we get the  $L^p$ estimate 
\begin{align*}
\|D^2 \tilde{u}_{k,q}\|_{L^p(B_{\frac{1}{2}})} \leq C \,
\big(\|H_{\tilde{f}_k}\|_{L^p(B_1)} + \|D \tilde{u}_{k,q}\|_{L^p(B_1)} \big) \leq C.
\end{align*}
Moreover the monotonicity formula \eqref{density} yields for $B_R = B_R(0) \subset \R^n$
\begin{align*}
\mu_{\tilde{f}_k}(B_R) \leq C\,R^2 \quad \mbox{ for any } R \in (0,\infty).
\end{align*}
We now apply a localized version of Langer's theorem from \cite{breuning11}
to obtain a proper immersion $f_0:\Sigma_0 \to \R^n$, such that 
the $\tilde{f}_k$ converge to $f_0$ locally in $C^{1,\beta}$ up to 
diffeomorphisms. Weak lower semicontinuity of $\mathcal{W}^p$ implies
\begin{align*}
H_{f_0} = 0.
\end{align*}
The Gau{\ss}-Bonnet theorem yields 
$$
\int_\Sigma |A_{f_k}|^2 \,d\mu_{f_k} = 4 \mathcal{W}(f_k) - 4\pi \chi(\Sigma) \leq C,
$$
and thus we get further
\begin{align*}
\int_{\Sigma_0} |A_{f_0}|^2\,d\mu_{f_0} \leq \liminf_{k \rightarrow \infty}
\int_{\Sigma} |A_{\tilde{f_k}}|^2\, d\mu_{\tilde{f_k}} \leq C.
\end{align*}
Summarizing, we have that $f_0:\Sigma_0 \to \R^n$ is a properly immersed minimal surface 
with finite total curvature. By results of Chern \& Osserman, see \cite{CO67}, the 
immersion $f_0$ admits a conformal reparametrisation on a compact surface with 
finitely many punctures, corresponding to the ends. Moreover, each end has a well-defined 
tangent plane and multiplicity. Now the monotonicity formula from 
\eqref{Simmon} implies
$$
\frac{\mu_{\tilde{f}_k}(B_R)}{\pi R^2} = \frac{\mu_{f_k}(B_{r_kR})}{\pi(Rr_k)^2}
\leq \frac{1}{4} \mathcal{W}(f_k)+ C (Rr_k)^{\frac{p-2}{p}} 
\quad \mbox{ for all }R > 0.
$$
Letting $k\rightarrow \infty$ and then $R\to \infty$ we conclude that 
\begin{align}
\limsup_{R \rightarrow \infty} \frac{\mu_{f_0}(B_R)}{\pi R^2} < 2.\label{catenoid}
\end{align}
This means that $f_0$ has just one simple end, and is in fact a plane. 
We now argue that the Gau{\ss} map converges to a constant locally uniformly
on $\Sigma_0 = \R^2$, contradicting the definition of $r_k$.
More precisely, from the compactness theorem in \cite{breuning11} we know that 
$\tilde{f}_k \circ \phi_k \to f_0$ locally in $C^1$ and moreover
$$
\|\tilde{f}_k \circ \phi_k - f_0\|_{C^0(U_k)} \to 0,
$$
where the $U_k \subset \R^2$ are open sets with
$U_1 \subset U_2 \subset \ldots$ and $\R^2 = \bigcup_{k=1}^\infty U_k$,
such that $\phi_k:U_k \to \tilde{f}_k^{-1}(B_k(0))$ is diffeomorphic.
Now $\tilde{f}_k(q_k) = 0$ by construction, therefore there
exists a $p_k \in U_k$ with $\phi_k (p_k) = q_k$. In
particular we have
$$
f_0(p_k) = f_0(p_k) - (\tilde{f}_k \circ \phi_k)(p_k) \to 0.
$$
Since $f_0$ is proper, we get $p_k \to p \in \R^2$ after
passing to a subsequence. Now by the indirect assumption, there
exist $x_k \in B_1(0)$ such that for all $k$
$$
|D\tilde{u}_{k,q_k}(x_k) - D\tilde{u}_{k,q_k}(0)| \geq \frac{\eps}{2} > 0.
$$
Denote the corresponding point by $q_k' \in \Sigma$. Then
$|\tilde{f}_k(q_k')| \leq C$, and hence there are points $p_k' \in \R^2$
with $\phi_k(p_k') = q_k'$. This implies
$$
|f(p_k')|
\leq |f(p_k') - (\tilde{f}_k \circ \phi_k)(p_k')| + |\tilde{f}_k(q_k')|
\leq C.
$$
Using again that $f_0$ is proper, we conclude after passing to a further
subsequence that $p_k' \to p' \in \R^2$. But now 
$
T_p f 
= \lim_{k \to \infty} T_{p_k} (\tilde{f}_k \circ \phi_k)
= \lim_{k \to \infty} T_{q_k} \tilde{f}_k
$, and analogously 
$T_{p'} f = \lim_{k \to \infty} T_{q_k'} \tilde{f}_k$.
From the indirect assumption, we obtain $T_p f \neq T_{p'}f$,
contradicting the fact that $f$ parametrizes a plane.\\
\\
Given \eqref{radius} we may finally use Lemma \ref{2.44} with
$\varrho: = \inf_{k \in \N} r_k > 0$ to get
\begin{align*}
\int_{B_{\frac{\varrho}{2}}} |D^2 u_{k,q}|^p \leq C \quad \mbox{ for all }q \in \Sigma,\,k \in \N.
\end{align*}
The global mass bound and a standard covering argument then imply that
\[
\mathcal{E}^p(f_k)\le C \quad \mbox{ for all } k \in \N.
\]
The desired conclusion now follows from Theorem \ref{p0}. 
\end{proof}

\subsection{Existence of minimizers}

Combining Theorem \ref{p0} with our regularity from Theorem \ref{regularity} we immediately get

\begin{thm}\label{p1}
For a closed surface $\Sigma$ and $p > 2$, denote by $\alpha^n_{\Sigma}(p)$ 
the infimum of the energy $\mathcal{E}^p$ among all smooth immersions from 
$\Sigma$ into $\R^n$. Then $\alpha^n_{\Sigma}(p)$ is attained by a 
smooth immersion $f:\Sigma \to \R^n$.
\end{thm}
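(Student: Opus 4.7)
The plan is a direct application of the compactness and regularity machinery already developed. Let $(f_k)_{k \in \N}$ be a sequence of smooth immersions $f_k:\Sigma \to \R^n$ with
\[
\mathcal{E}^p(f_k) \to \alpha^n_\Sigma(p) \quad \mbox{ as } k \to \infty.
\]
Since $\mathcal{E}^p$ is invariant under translations and under reparametrization, I may compose with rigid motions so that $0 \in f_k(\Sigma)$ for every $k$, without changing the energy. In particular the sequence still satisfies the hypotheses of Langer's compactness theorem (Theorem \ref{p0}).

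Applying Theorem \ref{p0}, after passing to a subsequence and replacing $f_k$ by $f_k \circ \varphi_k$ for suitable diffeomorphisms $\varphi_k \in C^\infty(\Sigma,\Sigma)$, we obtain weak convergence in $W^{2,p}(\Sigma,\R^n)$ to some $f \in W^{2,p}_{{\rm im}}(\Sigma,\R^n)$, together with the lower semicontinuity estimate
\[
\mathcal{E}^p(f) \leq \liminf_{k \to \infty} \mathcal{E}^p(f_k) = \alpha^n_\Sigma(p).
\]
In particular $f$ is a $W^{2,p}$ immersion, and since smooth immersions are dense in $W^{2,p}_{{\rm im}}(\Sigma,\R^n)$ by a standard mollification argument, the value $\alpha^n_\Sigma(p)$ equals the infimum of $\mathcal{E}^p$ over all of $W^{2,p}_{{\rm im}}(\Sigma,\R^n)$. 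Hence $f$ is a global minimizer in the open subset $W^{2,p}_{{\rm im}}(\Sigma,\R^n)$ of $W^{2,p}(\Sigma,\R^n)$, and since $\mathcal{E}^p$ is Fr\'echet differentiable there (as recorded in Section \ref{euler}), the Euler--Lagrange equation $D\mathcal{E}^p(f) = 0$ holds weakly.

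Now I invoke Theorem \ref{regularity}: as $f$ is a $W^{2,p}$ critical point of $\mathcal{E}^p$, its local graph representations are smooth (in fact real analytic), so $f:\Sigma \to \R^n$ is a smooth immersion. Combined with $\mathcal{E}^p(f) \leq \alpha^n_\Sigma(p) \leq \mathcal{E}^p(f)$, this shows that the infimum is attained. The only mild point that needs care is the density of smooth immersions in $W^{2,p}_{{\rm im}}(\Sigma,\R^n)$ used above, but this is standard since $W^{2,p}_{{\rm im}}$ is open and smooth functions are dense in $W^{2,p}$; no further obstacle arises.
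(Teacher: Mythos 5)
Your proposal follows essentially the same route as the paper: pass to a minimizing sequence normalized through the origin, invoke Langer's compactness (Theorem \ref{p0}) together with lower semicontinuity, use the mollification density argument to conclude the weak limit is a minimizer over $W^{2,p}_{{\rm im}}(\Sigma,\R^n)$ and hence a critical point, and finish with Theorem \ref{regularity}. The one small point you elide is that Theorem \ref{regularity} gives smoothness of the local graph representations, not of the parametrization $f$ itself (which is a priori only $W^{2,p}$); the paper closes this gap explicitly by composing $f$ with a further diffeomorphism, which is harmless because $\mathcal{E}^p$ is reparametrization invariant.
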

\begin{proof} 
Using mollification it is easy to see that 
$$
\alpha^n_{\Sigma}(p) = \inf \{\mathcal{E}^p(f): f \in W^{2,p}_{{\rm im}}(\Sigma,\R^n)\}.
$$
Thus the limiting map $f \in W^{2,p}_{{\rm im}}(\Sigma,\R^n)$ of a minimizing sequence
obtained from Theorem \ref{p0} is a critical point of $\mathcal{E}^p$, and hence 
smooth after composing with a diffeomorphism by Theorem \ref{regularity}. 
\end{proof}

For any fixed immersion $f:\Sigma \to \R^n$ we have
$$
\lim_{q \to p} \alpha^n_\Sigma(q) \leq \lim_{q \to p} \mathcal{E}^q(f)
= \mathcal{E}^p(f).
$$
Taking the infimum with respect to $f$ shows that the function 
$\alpha^n_\Sigma:[2,\infty) \to \R$ is upper semicontinuous. In 
particular, the function is continuous from the right since it 
is nondecreasing. For $\lambda > 0$ and  $f:\Sigma \to \R^n$ 
fixed we also note 
$$
\alpha^n_\Sigma(2) \leq \mathcal{E}^2(\lambda f)
= \frac{\lambda^2}{4}\mu_g(\Sigma) + \frac{1}{4} \int_{\Sigma} |A|^2\,d\mu_g
\leq \frac{\lambda^2}{4}\mu_g(\Sigma) + \mathcal{E}^2(f).
$$
Letting $\lambda \searrow 0$ and taking the infimum with respect to $f$ shows
\begin{equation}
\label{scaling2energy}
\alpha^n_\Sigma(2) = \inf_{f:\Sigma \to \R^n} \frac{1}{4} \int_{\Sigma} |A|^2\,d\mu_g.
\end{equation}
Recall that by the Gau{\ss} equation and the Gau{\ss}-Bonnet theorem 
\begin{eqnarray}
\label{eqgauss}
\mathcal{W}(f) = \frac{1}{4} \int_{\Sigma} |A|^2\,d\mu_g + \pi \chi(\Sigma).
\end{eqnarray}
The infimum of the Willmore energy among immersions of $\Sigma$ into $\R^n$
satisfies $\beta^n_\Sigma < 8\pi$  \cite{B-K}. Thus for $p > 2$ close 
to $2$, we conclude for a minimizer $f$ of $\mathcal{E}^p$ that
$$
{\mathcal W}(f) \leq  \mathcal{E}^p(f) +  \pi \chi(\Sigma)
= \alpha^n_\Sigma(p) +  \pi \chi(\Sigma)
< 8\pi.
$$
In particular, these minimizers are embedded by the
Li-Yau inequality \cite{li82}.\\
\\

Next we define the number $\beta^n_{\Sigma}(p)$ as 
the infimum of the energy $\mathcal{W}^p$ among all smooth immersions from 
$\Sigma$ into $\R^n$. Repeating the previous discussion with $\beta^n_{\Sigma}$ instead of $\alpha^n_\Sigma$ 
we conclude that for every sequence of immersions $f_k:\Sigma\to \R^n$ with $\mathcal{W}^p(f_k)\to \beta^n_{\Sigma}$ and $p-2$ small enough we have
$$
{\mathcal W}(f_k) \leq  \mathcal{W}^p(f_k)
\to \beta^n_\Sigma(p) 
< 8\pi.
$$
Combing this estimate with Theorem \ref{comp}, Theorem \ref{regularityW} and arguing as in the proof of Theorem \ref{p1} we get
\begin{thm}\label{p1W}
There exists $2<p_0<\infty$ such that for every closed surface $\Sigma$ and every $2<p<p_0$ the number $\beta^n_{\Sigma}(p)$ is attained by a 
smooth immersion $f:\Sigma \to \R^n$.
\end{thm}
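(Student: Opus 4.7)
The plan is to follow the scheme of Theorem \ref{p1}, with Theorem \ref{comp} playing the role of Langer's theorem. First I would take a minimizing sequence $f_k$ for $\beta^n_\Sigma(p)$ among $W^{2,p}$ immersions (equal to the infimum over smooth immersions by mollification), and use translation invariance of $\mathcal{W}^p$ to arrange $0 \in f_k(\Sigma)$.

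The crux is to verify the Willmore bound $\liminf_k \tfrac{1}{4}\int_\Sigma |H_k|^2\,d\mu_{g_k} < 8\pi$ needed to apply Theorem \ref{comp}. Since $(1+|H|^2)^{p/2} \geq |H|^2$ pointwise for $p \geq 2$, one has $\mathcal{W}(f_k) \leq \mathcal{W}^p(f_k) \to \beta^n_\Sigma(p)$, so it suffices to show $\beta^n_\Sigma(p) < 8\pi$ for $p$ close to $2$. Repeating the scaling computation used for $\alpha^n_\Sigma$ — i.e.\ evaluating $\mathcal{W}^2(\lambda f) = \tfrac{\lambda^2}{4}\mu_g(\Sigma) + \mathcal{W}(f)$ and letting $\lambda \searrow 0$ — shows $\beta^n_\Sigma(2) = \beta^n_\Sigma$, which is strictly less than $8\pi$ by Bauer--Kuwert \cite{B-K}. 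Combined with upper semicontinuity of $p \mapsto \beta^n_\Sigma(p)$ at $p=2$ (for fixed smooth $f$, dominated convergence gives $\mathcal{W}^q(f) \to \mathcal{W}^2(f)$ as $q \searrow 2$; taking the infimum over $f$ yields $\limsup_{q \searrow 2} \beta^n_\Sigma(q) \leq \beta^n_\Sigma(2)$), this provides some $p_0 > 2$ such that $\beta^n_\Sigma(p) < 8\pi$ for every $p \in (2,p_0)$.

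With both hypotheses of Theorem \ref{comp} now in force, after reparametrizing by suitable $\varphi_k \in C^\infty(\Sigma,\Sigma)$ and passing to a subsequence, $f_k \circ \varphi_k$ converges weakly in $W^{2,p}(\Sigma,\R^n)$ to some $f \in W^{2,p}_{\rm im}(\Sigma,\R^n)$ with
\[
\mathcal{W}^p(f) \leq \liminf_{k \to \infty} \mathcal{W}^p(f_k \circ \varphi_k) = \beta^n_\Sigma(p).
\]
Hence $f$ attains the infimum and is in particular a critical point of $\mathcal{W}^p$. Applying Theorem \ref{regularityW} to local graph representations of $f$ shows that $f$ is smooth, yielding the desired smooth immersed minimizer (after one further smooth reparametrization, exactly as in the proof of Theorem \ref{p1}).

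The only real obstacle is the quantitative bound $\beta^n_\Sigma(p) < 8\pi$ near $p = 2$; as Remark \ref{optimal} illustrates, without this strict inequality the catenoid-neck degeneration would destroy $W^{2,p}$ subconvergence. Once the scaling identity $\beta^n_\Sigma(2) = \beta^n_\Sigma$ and upper semicontinuity are in hand, everything else is a direct assembly of Theorem \ref{comp}, Theorem \ref{regularityW}, and the mollification argument used for Theorem \ref{p1}.
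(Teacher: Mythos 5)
Your proposal is correct and follows essentially the same route as the paper: use the scaling identity to get $\beta^n_\Sigma(2) = \beta^n_\Sigma < 8\pi$ via Bauer--Kuwert, invoke upper semicontinuity of $p \mapsto \beta^n_\Sigma(p)$ at $p=2$ to find $p_0 > 2$ with $\beta^n_\Sigma(p) < 8\pi$ on $(2,p_0)$, then combine the pointwise bound $\mathcal{W} \leq \mathcal{W}^p$ with Theorem \ref{comp}, Theorem \ref{regularityW}, and the mollification/reparametrization argument from Theorem \ref{p1}. The paper leaves the "$\beta^n_\Sigma(2)=\beta^n_\Sigma$" step as an implicit repetition of the $\alpha^n_\Sigma$ discussion, whereas you spell it out, which is a helpful clarification rather than a deviation.
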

  
The numbers $\alpha^n_\Sigma(p)$ and $\beta^n_{\Sigma}(p)$ depend only on the topological type
of $\Sigma$. This can be refined by minimizing in regular homotopy
classes of immersions $f:\Sigma \to \R^n$. 
Theorem \ref{p1} and Theorem \ref{p1W} extend without any difficulties.

\section{Palais-Smale condition} \label{sectionps}
\setcounter{equation}{0}
Here we show that for $p > 2$ the functionals $\mathcal{E}^p$ resp. $\mathcal{W}^p$ satisfy the Palais-Smale condition resp. a modified Palais-Smale condition, 
up to the action of diffeomorphisms on $\Sigma$. For $f \in W^{2,p}_{{\rm im}}(\Sigma,\R^n)$ 
and any $V \in W^{2,p}(\Sigma,\R^n)$ we define the norm
\begin{align*}
\|V\|_{W^{2,p}_f(\Sigma)} =
\Big(\int_\Sigma \big(|\nabla (DV)|_{g}^{p}+|DV|_{g}^{p} + |V|^{p}\big)\,d\mu_g\Big)^{\frac{1}{p}},
\end{align*}
where $g \in W^{1,p}(T^{0,2}\Sigma)$ is the metric induced by $f$ and $\nabla$ 
denotes its Levi-Civit\`{a} connection, with Christoffel symbols locally in $L^p$.
In particular, the norm is well-defined. Now put
\begin{align*}
\|D\mathcal{E}^p(f)\|_f = \sup \{D\mathcal{E}^p(f)V: 
V \in W^{2,p}(\Sigma,\R^n),\,\|V\|_{W^{2,p}_f(\Sigma)} \leq 1\}, 
\end{align*}
resp.
\begin{align*}
\|D\mathcal{W}^p(f)\|_f = \sup \{D\mathcal{W}^p(f)V: 
V \in W^{2,p}(\Sigma,\R^n),\,\|V\|_{W^{2,p}_f(\Sigma)} \leq 1\}, 
\end{align*}
For any diffeomorphism $\varphi \in W^{2,p}(\Sigma,\Sigma)$ we have that 
$(f \circ \varphi)^\ast g_{{\rm euc}} = \varphi^\ast (f^\ast g_{{\rm euc}})$, which implies  
$\|V\circ\varphi\|_{f\circ\varphi}=\|V\|_f$ and therefore 
\begin{align}
\label{psequivariance}
\|D\mathcal{E}^p(f\circ\varphi)\|_{f\circ\varphi} = \|D\mathcal{E}^p(f)\|_f,
\end{align}
resp.
\begin{align}
\label{psequivariancea}
\|D\mathcal{W}^p(f\circ\varphi)\|_{f\circ\varphi} = \|D\mathcal{W}^p(f)\|_f,
\end{align}
Now we can formulate the main results of this section.
\begin{thm}\label{palais}
Let $f_k \in W^{2,p}_{{\rm im}}(\Sigma,\R^n)$, $p>2$, be a sequence satisfying 
$$
\mathcal{E}^p(f_k) \le C \quad \mbox{ and } \quad 
||D\mathcal{E}^p(f_k)||_{f_k} \rightarrow 0.
$$
Then, after choosing a subsequence and passing to $f_k \circ \varphi_k$ for suitable diffeomorphisms 
$\varphi_k \in C^\infty(\Sigma,\Sigma)$, the $f_k$ converge strongly in $W^{2,p}(\Sigma,\R^n)$ 
to some $f \in W^{2,p}_{{\rm im}}(\Sigma,\R^n)$, and $f$ is a smooth critical point of $\mathcal{E}^p$.
\end{thm}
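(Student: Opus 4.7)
My plan is to combine Langer's Theorem~\ref{p0} with a monotonicity argument for the degenerate Euler--Lagrange system, using $f_k - f$ as test field in the difference of the two critical-point equations. First, from $\mathcal{E}^p(f_k) \le C$ I apply Theorem~\ref{p0} to extract diffeomorphisms $\varphi_k$ so that $f_k \circ \varphi_k$ converges weakly in $W^{2,p}$ to some $f \in W^{2,p}_{{\rm im}}(\Sigma,\R^n)$ and strongly in $C^{1,\beta}$. The equivariance \eqref{psequivariance} preserves $\|D\mathcal{E}^p(f_k \circ \varphi_k)\|_{f_k \circ \varphi_k} \to 0$, so I relabel and assume $f_k \rightharpoonup f$ with this property throughout. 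The operator $D\mathcal{E}^p(f)$ is a bounded linear functional on $W^{2,p}(\Sigma,\R^n)$ (by the growth bounds \eqref{eqgrowth} and H\"older), so $D\mathcal{E}^p(f)(f_k - f) \to 0$. Meanwhile the uniform convergence $g_{f_k} \to g_f$ makes $\|f_k - f\|_{W^{2,p}_{f_k}}$ equivalent to the standard $W^{2,p}$-norm and hence bounded, and the Palais--Smale hypothesis then gives $D\mathcal{E}^p(f_k)(f_k - f) \to 0$. Thus
\[
\bigl[D\mathcal{E}^p(f_k) - D\mathcal{E}^p(f)\bigr](f_k - f) \to 0.
\]

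The heart of the proof is to convert this into strong convergence of $D^2 f_k$ in $L^p$. In finitely many local graph charts with graph functions $u_k \rightharpoonup u$ in $W^{2,p}$ and $u_k \to u$ in $C^{1,\beta}$, the mean-value theorem along $u_\tau := (1-\tau) u + \tau u_k$ splits the principal part of the above difference into the ellipticity term
\[
\int\!\!\int_0^1 \frac{\partial a^{\alpha \beta}_i}{\partial q^j_{\gamma \lambda}}(Du_\tau, D^2 u_\tau)\, d\tau\; \partial^2_{\gamma \lambda}(u_k - u)^j\, \partial^2_{\alpha \beta}(u_k - u)^i,
\]
plus a $\partial a/\partial p$ perturbation controlled by $\|Du_k - Du\|_\infty \|V^{p-1}\|_{p/(p-1)} \|D^2(u_k - u)\|_p$; the $b^\alpha_i$ term contributes similarly $\|Du_k - Du\|_\infty \cdot O(1)$. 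Both of these vanish in the limit by H\"older and \eqref{eqgrowth}. The ellipticity \eqref{eqellipticity} bounds the main term from below by $\lambda \int \bigl(\int_0^1 V_\tau^{p-2}\, d\tau\bigr) |D^2 u_k - D^2 u|^2$, and a short elementary argument for $p \ge 2$ yields $\int_0^1 V_\tau^{p-2}\, d\tau \ge c(p)\,(1 + \max(|D^2 u|, |D^2 u_k|)^2)^{(p-2)/2}$, so the integrand dominates $c(p)\, |D^2 u_k - D^2 u|^p$. Summing over the cover produces $D^2 f_k \to D^2 f$ in $L^p(\Sigma)$, and together with $C^{1,\beta}$ convergence this upgrades the weak convergence to strong convergence in $W^{2,p}(\Sigma,\R^n)$.

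With strong convergence in hand, continuity of the Euler--Lagrange integrand in $(Df, D^2 f)$ gives $D\mathcal{E}^p(f_k)\phi \to D\mathcal{E}^p(f)\phi$ for every fixed $\phi \in C^\infty(\Sigma,\R^n)$; since $\|\phi\|_{W^{2,p}_{f_k}}$ is bounded the left-hand side tends to zero by Palais--Smale, so $f$ is a critical point of $\mathcal{E}^p$ and Theorem~\ref{regularity} then makes $f$ smooth. The main obstacle I expect is the careful estimation of the perturbative terms in the second paragraph: because $b$ and $D_p a$ carry $V^p$- and $V^{p-1}$-growth (see \eqref{eqgrowth}), one must identify in each of them the explicit factor of $\|Du_k - Du\|_\infty \to 0$ that beats the only $L^1$- and $L^{p/(p-1)}$-bounded portion of the integrand, so that these contributions do not absorb the ellipticity constant $\lambda$.
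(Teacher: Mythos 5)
Your proposal is correct and follows essentially the same route as the paper: Langer's theorem for weak $W^{2,p}$ and $C^1$ convergence, the Palais--Smale hypothesis combined with weak convergence (plus the equivariance \eqref{psequivariance}) to show that $\bigl[D\mathcal{E}^p(f_k)-D\mathcal{E}^p(f)\bigr]$ tested against the difference tends to zero, followed by a mean-value decomposition in local graph charts, absorption of the $D_pa$- and $b$-perturbations via $\|Du_k-Du\|_\infty\to 0$, and finally the ellipticity bound upgraded to $|D^2(u_k-u)|^p$ using the elementary inequality (Palais, Lemma 19.27). The only cosmetic difference is that the paper localizes from the start with the cut-off test function $\psi_k=\eta(u_k-u)$ and stages the mean-value step through the intermediate coefficient $a^{\alpha\beta}_i(Du,D^2u_k)$, whereas you begin with the global field $f_k-f$ and apply the mean-value theorem jointly in $(p,q)$; both variants require localizing to carry out the ellipticity estimate, so the underlying argument is identical.
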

\begin{thm}\label{palaisW}
Let $f_k \in W^{2,p}_{{\rm im}}(\Sigma,\R^n)$, $\delta>0$, $p>2$, be a sequence satisfying 
$$
\mathcal{W}^p(f_k) \le C,\quad \mathcal{W}(f_k)\le 8\pi-\delta \quad \mbox{ and } \quad 
||D\mathcal{W}^p(f_k)||_{f_k} \rightarrow 0.
$$
Then, after choosing a subsequence and passing to $f_k \circ \varphi_k$ for suitable diffeomorphisms 
$\varphi_k \in C^\infty(\Sigma,\Sigma)$, the $f_k$ converge strongly in $W^{2,p}(\Sigma,\R^n)$ 
to some $f \in W^{2,p}_{{\rm im}}(\Sigma,\R^n)$, and $f$ is a smooth critical point of $\mathcal{W}^p$.
\end{thm}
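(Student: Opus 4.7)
The plan is to combine Theorem \ref{comp} with a monotonicity argument of $p$-Laplacian type. Under the hypotheses, Theorem \ref{comp} applies: after composition with diffeomorphisms $\varphi_k \in C^\infty(\Sigma,\Sigma)$ and extraction of a subsequence, I may assume $f_k \rightharpoonup f$ weakly in $W^{2,p}(\Sigma,\R^n)$ and strongly in $C^{1,\beta}$ to some $f \in W^{2,p}_{{\rm im}}(\Sigma,\R^n)$; by the equivariance \eqref{psequivariancea}, the hypothesis $\|D\mathcal{W}^p(f_k)\|_{f_k} \to 0$ is preserved under the reparametrization.

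The main step is to upgrade this to strong $W^{2,p}$ convergence. I would cover $\Sigma$ by finitely many open sets on each of which, for $k$ large, both $f_k$ and $f$ are graphs of functions $u_k, u : \Omega \to \R^{n-2}$ over a common tangent plane, with $u_k \to u$ in $C^{1,\beta}(\Omega)$. For a cutoff $\eta \in C^\infty_c(\Omega)$ I construct $V_k \in W^{2,p}(\Sigma,\R^n)$ which in the graph chart corresponds to $(0,\eta^2(u_k-u))$ and is extended by zero. Then $\|V_k\|_{W^{2,p}_{f_k}} \le C$, so the Palais-Smale condition yields $D\mathcal{W}^p(f_k)V_k \to 0$. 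Inserting \eqref{EL} together with the bounds \eqref{EL2}, the lower-order contribution $\int B_k \partial V_k$ vanishes in the limit since $\partial V_k \to 0$ uniformly while $B_k$ stays bounded in $L^1$; expanding $L_{g_k}(\eta^2(u_k-u))$ and absorbing the terms involving $\partial(u_k-u)$ and $u_k-u$ themselves (which go to zero by $C^{1,\beta}$ convergence), the equation reduces to
\[
\int_\Omega \eta^2\, \mathcal{H}_k \cdot L_{g_k}(u_k - u)\, dx \;\longrightarrow\; 0,
\]
where $\mathcal{H}_k = (1+|H_k|^2)^{(p-2)/2}H_k$. The mean curvature system \eqref{hsystem}, rewritten as $\sqrt{\det g}\,H = M(Du)\,L_g u$ with invertible coefficient matrix $M_{ij}(Du) = \delta_{ij} - g^{\lambda\mu}\partial_\lambda u^i\partial_\mu u^j$, together with $C^{1,\beta}$ convergence and $g_k \to g$ in $C^0$, gives $L_{g_k}(u_k - u) = M(Du)^{-1}\sqrt{\det g}\,(H_k - H) + o(1)$ in $L^p_{\rm loc}$. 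Since moreover $H_k \rightharpoonup H$ weakly in $L^p$ and $\eta^2\mathcal{H} \in L^{p/(p-1)}$ (because $H \in L^p$), the cross-term $\int \eta^2 \mathcal{H}\cdot M(Du)^{-1}\sqrt{\det g}\,(H_k-H)\,dx$ vanishes in the limit, so after subtraction
\[
\int_\Omega \eta^2\bigl[\mathcal{H}_k - \mathcal{H}\bigr] \cdot M(Du)^{-1}\sqrt{\det g}\,(H_k - H)\, dx \;\longrightarrow\; 0.
\]
The $p$-Laplacian-type monotonicity
\[
\bigl[(1+|\xi|^2)^{\frac{p-2}{2}}\xi - (1+|\zeta|^2)^{\frac{p-2}{2}}\zeta\bigr]\cdot(\xi - \zeta) \ge c_p\,(1+|\xi|^2+|\zeta|^2)^{\frac{p-2}{2}}|\xi - \zeta|^2,
\]
valid for $p \ge 2$, together with the positive-definiteness of $M(Du)^{-1}$, then forces $H_k \to H$ strongly in $L^p(\{\eta = 1\})$. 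Feeding this back into \eqref{hsystem}, which is a linear elliptic system for $u_k$ with $C^{0,\beta}$ coefficients converging to those of $u$ and right-hand side converging strongly in $L^p$, the estimate \eqref{estmean} applied to $u_k - u$ yields $u_k \to u$ strongly in $W^{2,p}_{\rm loc}$. A finite cover then delivers $f_k \to f$ strongly in $W^{2,p}(\Sigma,\R^n)$. The limit therefore satisfies the Euler-Lagrange equation in every local graph chart, and Theorem \ref{regularityW} gives that $f$ is smooth.

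The main obstacle lies in the monotonicity step: the test function $V_k$ is naturally adapted to $f_k$, yet the comparison forces in the mean curvature $H$ of the limit $f$, and the principal operator appearing is $L_{g_k}$ rather than $L_g$. Careful book-keeping of the error terms arising from $g_k \neq g$, making the replacement $L_{g_k}(u_k - u) \approx M(Du)^{-1}\sqrt{\det g}\,(H_k - H)$ rigorous via the mean curvature system \eqref{hsystem} with H\"older-continuous coefficients, and extracting the monotonicity cleanly in spite of the intervening matrix $M(Du)^{-1}$, constitute the technical heart of the argument.
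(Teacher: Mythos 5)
The paper gives no explicit proof of Theorem \ref{palaisW}; it states only that the argument is parallel to that for Theorem \ref{palais}, the $\mathcal{E}^p$ Palais--Smale condition. Your proposal supplies the missing details, and does so in the right way. You correctly identify the essential structural difference from the $\mathcal{E}^p$ case: the $\mathcal{W}^p$ Euler--Lagrange system \eqref{e} has degenerate ellipticity, since its principal coefficient only sees the trace part of $D^2 u$ through $H$. The paper's $\mathcal{E}^p$ argument terminates by using the full-Hessian ellipticity \eqref{eqellipticity} to absorb $|D^2(u_k-u)|^p$; in the $\mathcal{W}^p$ case the monotonicity step can only give control on $H_k - H$, and one must then feed this back through the second-order elliptic mean curvature system \eqref{hsystem} (via Lemma \ref{2.44} / estimate \eqref{estmean}) to upgrade to strong $W^{2,p}_{\rm loc}$ convergence of $u_k$. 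You do exactly that, and the treatment of the lower-order $B$-terms, the equivariance \eqref{psequivariancea}, and the invocation of Theorem \ref{comp} are all correct.

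There is one genuine (though fixable) gap in the monotonicity step. You invoke ``the positive-definiteness of $M(Du)^{-1}$'' to pass from
$\int \eta^2[\mathcal{H}_k-\mathcal{H}]\cdot M(Du)^{-1}\sqrt{\det g}\,(H_k - H) \to 0$
to strong convergence of $H_k$. For codimension $n-2\geq 2$ this is not enough: the map $\xi\mapsto\nabla\phi(\xi)=(1+|\xi|^2)^{(p-2)/2}\xi$ is \emph{not} $A$-monotone for an arbitrary symmetric positive definite $A$, i.e.\ $\langle\nabla\phi(\xi)-\nabla\phi(\zeta),A(\xi-\zeta)\rangle$ can be negative (the rank-one part $(p-2)(1+|w|^2)^{(p-4)/2}ww^\top$ of $D\nabla\phi$ can make $\langle D\nabla\phi(w)v,Av\rangle<0$ when $A$ has large eigenvalue spread). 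What saves the argument is that here $A=M(Du)^{-1}\sqrt{\det g}=I+O(|Du|^2)$, not merely positive definite. Writing $A=I+E$, using the matching two-sided bounds $c_p(1+|\xi|^2+|\zeta|^2)^{\frac{p-2}{2}}|\xi-\zeta|^2 \leq \langle\nabla\phi(\xi)-\nabla\phi(\zeta),\xi-\zeta\rangle$ and $|\nabla\phi(\xi)-\nabla\phi(\zeta)|\leq C_p(1+|\xi|^2+|\zeta|^2)^{\frac{p-2}{2}}|\xi-\zeta|$, one absorbs the $E$-contribution provided $\|E\|_{C^0}<c_p/(2C_p)$, and that follows from choosing the graph charts so that $\|Du\|_{C^0}$ is sufficiently small --- which is already part of the construction in the proof of Theorem \ref{comp} via the radii $r_k(q,\varepsilon)$. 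So the gap is a missing quantitative hypothesis (smallness of $|Du|$ in the chart), not a wrong approach, and the argument closes once this is stated.
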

Since the arguments for the two results are very similar (thanks to Theorem \ref{p0} and Theorem \ref{comp}) we only present the proof of Theorem \ref{palais}.
\begin{proof}
Langer's compactness theorem \cite{langer85} yields that 
after passing to a subsequence $f_k \circ \varphi_k \to f$ in the $C^1$ topology
and weakly in $W^{2,p}(\Sigma,\R^n)$, where $f \in W^{2,p}_{{\rm im}}(\Sigma,\R^n)$
and $\varphi_k \in C^\infty(\Sigma,\Sigma)$ are diffeomorphisms. It remains 
to see that the convergence is strong in $W^{2,p}(\Sigma,\R^n)$, for which it 
suffices to consider the local convergence of the graph representations 
over a disk $B_r \subset \R^2$. Namely, then the assumption implies that $f$ 
is a critical point of $\mathcal{E}^p$ and is hence smooth by Theorem 
\ref{regularity}, after composing with a further diffeomorphism.\\
\\
Let $u_k,\, u \in W^{2,p}(B_r,\R^{n-2})$ be the graph functions for $f_k$ and $f$, 
respectively. Then $u_k \to u$ in $C^1(B_r)$, weakly in $W^{2,p}(B_r)$, and we can 
assume
\begin{equation}
\label{uk}
||Du_k||_{C^0(B_r)} \le L \leq 1 \quad \mbox{ and }
||u_k||^{p}_{W^{2,p}(B_r)} \le C\, \mathcal{E}^p(f_k) \le C. 
\end{equation}
We let $\psi_k = \eta (u_k-u)$ where $\chi_{B_{r/2}} \leq \eta \leq \chi_{B_r}$ is a 
cut-off function. Clearly
\begin{equation}
\label{psik}
\|\psi_k\|_{C^1(B_r)} \to 0 \quad \mbox{ and } \quad ||\psi_k||_{W^{2,p}(B_r)} \le C.
\end{equation}
Next we recall from \eqref{eqgraphsystem} the Fr\'{e}chet derivative, 
in a graph representation:
\begin{equation}
\label{palais5}
D\mathcal{E}^p(f_k)(0,\psi_k) = 
\int_{B_r} \Big(a^{\alpha \beta}_i (Du_k,D^2 u_k) \partial^2_{\alpha \beta} \psi_k^i 
+ b_i^\alpha(Du_k,D^2 u_k) \partial_\alpha \psi^i_k\Big),
\end{equation}
where 
\begin{eqnarray*}
a_i^{\alpha \beta}(Du_k,D^2 u_k) & = & 
\frac{p}{4}(1+|A_k|^2)^{\frac{p-2}{2}} B^{\alpha \beta, \gamma\lambda}_{ij}(Du_k) 
\partial^2_{\gamma \lambda} u_k^j  \sqrt{\det g_k},\\
b_i^{\alpha}(Du_k,D^2 u_k) & = & 
\frac{p}{8} (1+|A_k|^2)^{\frac{p-2}{2}} 
\frac{\partial B^{\gamma \lambda, \mu \nu}_{jm}}{\partial p_\alpha^i}(Du_k)
\partial^2_{\gamma \lambda}u_k^j \partial_{\mu \nu} u_k^m \sqrt{\det g_k}\\
& & + \frac{1}{4} (1+|A_k|^2)^{\frac{p}{2}} 
\frac{\partial \sqrt{\det g_k}}{\partial p^i_\alpha}(Du_k). 
\end{eqnarray*}
Here $(g_k)_{\alpha \beta} =
\delta_{\alpha \beta} + \langle \partial_\alpha u_k,\partial_\beta u_k\rangle$ and
$B^{\alpha \beta, \gamma \lambda}_{ij}(Du_k) = 
g_k^{\alpha \gamma} g_k^{\beta \lambda} 
(\delta_{ij} - g_k^{\mu \nu} \partial_\mu u_k^i \partial_\nu u_k^j)$.
We see easily that
\begin{eqnarray*}
a_i^{\alpha \beta}(Du_k,D^2 u_k) & \leq & C\,(1+|D^2 u_k|^2)^{\frac{p-1}{2}},\\
b_i^{\alpha}(Du_k,D^2 u_k) & \leq & C\,L\,(1+|D^2 u_k|^2)^{\frac{p}{2}},
\end{eqnarray*}
and obtain for $k \to \infty$
\begin{eqnarray}
\label{eqakinfty}
\int_{B_r} a_i^{\alpha \beta}(Du_k,D^2 u_k)
\Big(\partial^2_{\alpha \beta}\psi_k^i - \eta\,\partial^2_{\alpha \beta} (u^i_k - u^i)\Big)  
& \to \ 0,\\
\label{eqbkinfty}
\int_{B_r} b_i^\alpha(Du_k,D^2 u_k) \partial_\alpha \psi^i_k & \to 0.
\end{eqnarray}
Now using \eqref{uk} and \eqref{psik} we get 
$\|(0,\psi_k)\|_{W^{2,p}_{f_k}(\Sigma)} \leq C\,\|\psi_k\|_{W^{2,p}} \leq C$,
and hence 
$$
D\mathcal{E}^p(f_k)(0,\psi_k) \rightarrow 0  \quad \mbox{ as } k \to \infty,
$$
using the assumption of the theorem and \eqref{psequivariance}. Combining with 
\eqref{eqakinfty} and \eqref{eqbkinfty}, and noting that
$a^{\alpha \beta}_i (Du,D^2 u) \in L^p(B_r,\R^{n-2})'$, 
we conclude
$$
\int_{B_r}\eta \Big(a^{\alpha \beta}_i(Du_k,D^2 u_k) - a^{\alpha \beta}_i(Du,D^2 u)\Big)
\partial^2_{\alpha \beta}(u_k^i-u^i) \rightarrow 0 \quad \mbox{ as } k \to \infty.
$$
But since $u_k \to u$ in $C^1(B_r,\R^{n-2})$ we also have that 
$$
\int_{B_r}\eta \Big(a^{\alpha \beta}_i(Du_k,D^2 u_k) - a^{\alpha \beta}_i(Du,D^2 u_k)\Big)
\partial^2_{\alpha \beta}(u_k^i-u^i) \rightarrow 0, 
$$
and by adding the last two equations we get
$$
\int_{B_r}\eta \Big(a^{\alpha \beta}_i(Du,D^2 u_k) - a^{\alpha \beta}_i(Du,D^2 u)\Big)
\partial^2_{\alpha \beta}(u_k^i-u^i) \rightarrow 0 \quad \mbox{ as } k \to \infty.
$$
Finally we use the ellipticity, see \eqref{eqellipticity}, to estimate
\begin{eqnarray*}
&& \int_{B_r}\eta \Big(a^{\alpha \beta}_i(Du,D^2 u_k) - a^{\alpha \beta}_i(Du,D^2 u)\Big)
\partial^2_{\alpha \beta}(u_k^i-u^i)\\
& = & \int_{B_r}\eta \int_0^1 \frac{\partial a^{\alpha \beta}_i}{\partial q^j_{\lambda \mu}}
\big(Du,D^2 u + t D^2 (u_k - u)\big) \partial^2_{\lambda \mu} (u_k^j - u^j) 
\partial^2_{\alpha \beta}(u_k^i-u^i)\,dt\\
& \geq & \lambda \,\int_{B_r} \eta \int_0^1  \Big(1+|D^2 u + t D^2 (u_k - u)|^2\Big)^{\frac{p-2}{2}}
|D^2 (u_k-u)|^2\,dt\\
& \geq & c\,\lambda \int_{B_{r/2}} |D^2 (u_k-u)|^p.
\end{eqnarray*}
In the last step we used the elementary Lemma $19.27$ from \cite{P}. Altogether we 
have proved local and hence global convergence in $W^{2,p}$. 
\end{proof}

\end{document}